\documentclass[12pt,letterpaper,reqno,oneside]{amsart}
\usepackage{amscd,amssymb}
\usepackage[arrow,matrix]{xy}
\usepackage{xcolor}
\usepackage[colorlinks=true,linkcolor=blue,citecolor=purple]{hyperref}
\usepackage{color}
\usepackage{graphicx}
\usepackage{fullpage}
\usepackage{mathrsfs}
\usepackage{tikz-cd}
\usetikzlibrary{calc, positioning, arrows.meta}
\usepackage{pifont}
\usepackage{mathtools}

\theoremstyle{plain}
\newtheorem{thm}{Theorem}[section]
\newtheorem{lem}[thm]{Lemma}
\newtheorem{prop}[thm]{Proposition}
\newtheorem{cor}[thm]{Corollary}

\theoremstyle{definition}
\newtheorem{rmk}[thm]{Remark}

\newtheorem{ex}[thm]{Example}

\newcommand{\CC}{\mathbb{C}}
\newcommand{\PP}{\mathbb{P}}
\newcommand{\ZZ}{\mathbb{Z}}

\newcommand{\QQ}{\mathbb{Q}}
\newcommand{\MM}{\mathbb{M}}

\newcommand{\Zc}{\mathcal{Z}}

\newcommand{\cd}{\mathrm{cd}}
\newcommand{\adj}{\mathrm{adj}}
\newcommand{\HF}{\mathrm{HF}}
\newcommand{\HP}{\mathrm{HP}}

\newcommand{\Syz}{\mathrm{Syz}}
\newcommand{\Proj}{\mathrm{Proj}}
\newcommand{\ann}{\mathrm{ann}}
\newcommand{\coker}{\mathrm{coker}\, }
\newcommand{\im}{\mathrm{Im}}
\newcommand{\rank}{\mathrm{rank}}

\newcommand{\depth}{\mathrm{depth}}
\newcommand{\Supp}{\mathrm{Supp}}

\newcommand{\Sc}{\mathcal{S}}

\newcommand{\Rc}{\mathcal{R}}

\newcommand{\Xr}{\mathscr{X}}
\newcommand{\Sr}{\mathscr{S}}

\newcommand{\mf}{\mathfrak{m}}
\newcommand{\pf}{\mathfrak{p}}
\newcommand{\qf}{\mathfrak{q}}
\newcommand{\Rf}{\mathfrak{R}}

\begin{document}

\title[]
{Implicitization of rational hypersurfaces by syzygies with respect to coefficient ideals}

\author{Falai Chen}
\address{School  of Mathematics, University of Science and Technology of China, Hefei, Anhui 230026,  China.}
\email{chenfl@ustc.edu.cn}

\author{Yunzhi Wang}
\address{School  of Mathematics, University of Science and Technology of China, Hefei, Anhui 230026,  China.}
\email{wangyunzhi@mail.ustc.edu.cn}

\subjclass{13D02,13P25,13D45,14Q10,14M25}
\keywords{Rational hypersurfaces, implicitization, toric variety, Rees algebra, coefficient ideal}

\date{\today}

\begin{abstract} 
We study rational hypersurfaces $\Sr$ defined as the closure of the image of a generically finite rational map $\phi:\mathscr{X}\rightarrow \PP^{n+1}$, where $\Xr$ is an $n$-dimensional toric variety. We provide matrix representations for the implicitization of $\Sr$ that are constructed from the coefficients of linear syzygies and quadratic syzygies of the parametric equations. A central feature of our construction is the restriction of all coefficients in the Cox ring $R$ to a specific coefficient ideal $J$. In the two-dimensional case, this approach eliminates the need for $\phi$ to be locally a complete intersection at the base points, that is, the determinant of the implicitization matrix is equal to a power of the implicit equation for arbitrary base points. This result generalizes several previous results in surface implicitization. 
\end{abstract}

\maketitle

\section{Introduction}\label{sec:intro}
The implicitization problem involves computing the implicit equation of a rational hypersurface from its parametric equations. The quest for a matrix-based solution dates back to the 18th century and the work of mathematicians such as B\'{e}zout, Cayley, Sylvester and Dixon, and continued into the early 20th century. Over the past few decades, this problem has regained significant attention due to its applications in geometric modeling. The 1980s and 1990s saw the advent of Gröbner basis methods and substantial progress in resultant-based techniques. However, Gröbner basis computations are highly expensive and resultant methods fail for surfaces with base points.

To overcome the limitations mentioned above, Thomas Sederberg and one of the authors of this paper introduced the method of moving surfaces \cite{SC95}. This approach constructs a matrix whose elements are composed of higher degree polynomials along with linear polynomials. Experimental results demonstrate its effectiveness for rational parametric surfaces, even those with complicated base points.

\medskip
In this article, we study the implicitization problem for rational hypersurfaces defined by a toric parameterization. Let $\Xr$ be an $n$-dimensional $(n\geqslant2)$ non-degenerate simplicial toric variety over an algebraically closed field $k$. The cases of the most interest are triangular surfaces and tensor product surfaces, where $\Xr=\PP^2,\ \PP^1\times\PP^1$ respectively. Let $R$ be the Cox ring with the class group $G$ and the irrelevant ideal $B$. Let $S=k[x_0,\cdots,x_{n+1}]$ be the homogeneous coordinate ring of $\PP^{n+1}$. We consider the following rational map defined by a homogeneous ideal $I=(f_0,\dots,f_{n+1})$ in $R$ :  
$$\phi: \ \   \Xr \ \  \longrightarrow \ \ \ \ \PP^{n+1}$$
$$\ \ \ \ \ \ \ \ \ \ \ \ \ \  p\longmapsto (f_0,\cdots,f_{n+1})(p)$$
where $f_0,\cdots,f_{n+1}$ are homogeneous polynomials of the same degree $\gamma\in G$. We assume $\phi$ is generically finite, with its base locus $V(I)$ supported on a finite set of points, and $\Xr$ is smooth at $V(I)$. This rational map defines a hypersurface $\mathscr{S}$ that is the closure of the image. Let $H$ denote its implicit equation. 

\medskip
The method of moving surface concerns the following polynomial
\begin{equation}\label{eq:defMS}
F=\sum_{i_0+\cdots+i_{n+1}=N}g_{i_0\cdots i_{n+1}}x_0^{i_0}\cdots x_{n+1}^{i_{n+1}}
\end{equation}
such that $F(f_0,\cdots,f_{n+1})\equiv 0$, where $g_{i_0\cdots i_{n+1}}$ are homogeneous polynomials in $R$ of a common degree. These coefficients correspond to the $N$-th degree syzygies of $I$, which form the kernel of the following map induced by $I^N$:
\begin{equation}\label{eq:defQk}
0\rightarrow \Syz(I^N;R)\rightarrow R^{\binom{N+n+1}{n+1}}  \xrightarrow{I^N}  R(N\gamma). \nonumber
\end{equation}
We denote the set of all $F$ by $Q_N(I;R)$ to emphasize the grading. In \cite{SC95}, linear (resp.\ quadratic) syzygies are called moving planes (resp.\ quadrics) in dimension two. It is easy to see that $Q=\sum_{i\geqslant0} Q_i$ is a $G \oplus \ZZ$-graded $R\otimes_k S$-module, since multiplying an $N$-th degree syzygy by a polynomial in $S$ yields a syzygy of higher degree. 

For $\mu\in G$, we can recursively select a set of minimal $S$-generators $L_1,\cdots,L_l$ for $Q_{\mu}$: having chosen minimal generators $L_1,\cdots,L_{l_N}$ of $\sum_{k\leqslant N}Q_{\mu,k}$, we add $L_{l_N+1},\cdots,L_{l_{N+1}}$ as the minimal generators of $Q_{\mu,N+1}/\langle L_1,\cdots,L_{l_N} \rangle$. Then, given a $k$-basis $b_1,
\cdots,b_q$ of $R_{\mu}$, the \emph{implicitization matrix} is defined as the following coefficient matrix $\MM_{\mu}$:
\begin{equation}\label{eq:defM}
(b_1\ \cdots\ b_q)\ \MM_{\mu}=(L_1\ \cdots\ L_l  ).
\end{equation}
It is a column-homogeneous matrix over $S$ with each column of degree $N$ corresponding to an $N$-th degree syzygy. If we choose $L_1,\cdots,L_l$ as minimal $S$-generators of $Q_{\mu,1}$ (resp. $Q_{\mu,2}$), then the matrix satisfying $\eqref{eq:defM}$ is called a \emph{linear matrix} (resp. \emph{quadratic matrix}). Note that quadratic matrices may consist of linear and quadratic columns.

\medskip
In \cite{SC95}, it is experimentally shown that for some special $
\mu$, the quadratic matrix $\MM_{\mu}$ is often square with the determinant equal to the implicit equation $H$.  The first rigorous proof of the two-dimensional case is given in \cite{CGZ00}, under the assumptions that the rational map has no base points and no linear syzygies of $\mathbf{f'}$, where $\mathbf{f}'=(f_0',\cdots,f_n')$ are $n+1$ generic $k$-linear combinations of $\mathbf{f}$. Several years later, \cite{BCD03,AHW05} respectively prove the results for triangular and tensor product surfaces that are locally complete intersections at the base points (abbreviated as l.c.i.) with additional constraints on base points. In \cite{BJ03}, moving surface modules are first interpreted as Rees algebras and the determinants are computed by approximation complexes. This provides a more algebraic framework, leading to subsequent research including \cite{BC05,BC21,BCS10,BD16,Bot11}. The case involving linear syzygies of $\mathbf{f'}$ has recently been considered in \cite{BC21,LCS19,LCSG22}. Research work on non-l.c.i.\ surfaces can be found in \cite{BD16, HW23, ZSCC03}.  

Let $\mathscr{S}$ be a tensor product surface of bi-degree $\gamma=(\gamma_1,\gamma_2)$ with at most l.c.i.\ base points. In \cite{Bot11}, it is shown that the greatest common divisor of the maximal minors of $\MM_{2\gamma_1-1,\gamma_2-1}$ equals $H^{\deg(\phi)}$. Subsequently, \cite{BC21} proposes that the quadratic matrix $\MM_{\mu,\gamma_2-1}$ also yields the implicit equation for some $\mu<2\gamma_1-1$. These two matrices become non-square when $I$ possesses certain linear syzygies of $\mathbf{f'}$. Furthermore, if $I$ has a.l.c.i. base points ($I_{\pf}$ is locally generated by $n+1$ elements) then extra factors appear in the determinants. In the presence of non-a.l.c.i.\ base points, these two matrices may even fail to be of full rank.

\medskip
Our work focuses on three goals. First, we seek to provide a matrix representation that does not require the l.c.i.\ assumption at least in the two-dimensional case. Second, we aim to extend previous results to toric-parametrized hypersurfaces without restriction on dimension. Third, when the implicitization matrix is non-square, we aim to derive a more compact representation by reducing the number of rows, columns, and the differences between them.

Following the idea of \cite{ZSCC03}, we construct the implicitization matrix $\MM_{\mu}$ using syzygies with coefficients in a homogeneous ideal $J$. This means we only use syzygies with $g_{i_0\cdots i_{3}}\in J$ in \eqref{eq:defMS} and choose $b_1,\cdots,b_q$ as a basis of $J_{\mu}$. Consequently, our matrices have fewer rows than those using all bases of $R_{\mu}$. We impose the following conditions on the \emph{coefficient ideal} $J$. 
\par \ding{172} $I$ is locally generated by $n$ elements with respect to $J$ at the base points, that is, for any $\mathfrak{p}\in V(I)$, there exist $f_1'\cdots,f_n'\in I_{\pf}$ such that $I_{\pf}J_{\pf}=(f_1',\cdots,f_n')J_{\pf}$.
    
\par \ding{173} $I\subset J$.
\par \ding{174} $J=J^{sat}$, where $J^{sat}=J:B^{\infty}$ is the \emph{saturation} of $J$ with respect to $B$.
\par  \ding{175} $I$ is locally generated by a proper sequence with respect to $J$ at the base points.
\par \ding{176} $I$ is locally of linear type with respect to $J$ at the base points.

We will prove that conditions \ding{172} and \ding{173} imply \ding{175} and  \ding{176}. Recall that $I$ is a \emph{locally complete intersection} at the base point if the number of minimal generators of $I_{\pf}$ equals $\depth(I_{\pf})$, which is $n$. Thus, condition \ding{172} is a natural generalization of the l.c.i.\ condition, focusing on the number of generators rather than $\depth(I_{\pf},J_{\pf})$. 

\medskip
The main result of the current paper is the following:
\begin{thm}\label{thm:det}
\leavevmode
    \begin{itemize}
        \item[i)] If $I$ admits a coefficient ideal, then there exists $\mu\in G$ and the quadratic matrix $\MM_{\mu}$, such that the greatest common divisor of its maximal minors equals $H^{\deg(\phi)}$.
        \item[ii)] Coefficient ideals exist for any $I$ in dimension two. This yields quadratic implicitization matrices for rational surfaces with arbitrary base points.
        \item[iii)] Let $f_0',\cdots,f_n'$ be $n+1$ generic $k$-linear combinations of $f_0,\cdots,f_{n+1}$. For the $\mu$ in $\mathrm{i)}$, if $R_{\mu-\gamma}=0$ and $\Syz(\mathbf{f'};J)_{\mu}=0$, then $\MM_{\mu}$ is a square matrix. 
	\end{itemize}	
\end{thm}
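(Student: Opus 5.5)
The plan is the approximation-complex/Rees-algebra strategy of \cite{BJ03,Bot11,BC21}, carried out for the $R\otimes_k S$-module of $J$-coefficient syzygies.

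\emph{Setup.} Consider the symmetric algebra $\mathrm{Sym}(I)$ and the Rees algebra $\Rc(I)$. Replace them by their "coefficient-restricted" versions $J\cdot\mathrm{Sym}_R(I)$ and $J\cdot\Rc(I)$, viewed as $J$-coefficient presentations. Then $Q_{\cdot}(I;J)$ is exactly the kernel of the surjection from $J\otimes_k S$ onto the image of $J\,\Rc$, read in degree $\mu$.

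\emph{Step 1 (i).} First show that \ding{172} and \ding{173} imply \ding{175} and \ding{176}. Locally at each base point $\pf$ we have $I_\pf J_\pf=(f_1',\dots,f_n')J_\pf$. Since $V(I)$ is zero-dimensional and $\Xr$ is smooth there, $f_1',\dots,f_n'$ is a regular sequence in $R_\pf$. One then checks that the Koszul relations on $(f_1',\dots,f_n')$, tensored with $J_\pf$, present $(IJ)_\pf$. This gives linear type of $J$-symmetric powers, hence \ding{176}, and proper sequence, hence \ding{175}.

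Next form the $\Zc$-approximation complex $\Zc_\bullet(I;J)$ with coefficients in $J$. Use \ding{175} and the finiteness of $V(I)$ to show that its homology is supported on $V(B)\cup V(I)$. Off the base points it is acyclic, and at the base points it is acyclic by \ding{176}, so $H_i$ is $B$-torsion for $i>0$ and $H_0=\mathrm{Sym}_J$ agrees with $J\Rc$ up to $B$-torsion.

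Then choose $\mu$ beyond the local-cohomology vanishing region. Saturation \ding{174} guarantees $H^0_B(J)=H^1_B(J)=0$, so the graded strand $(\Zc_\bullet)_\mu$ becomes an acyclic complex of free $S$-modules. Its MacRae invariant (determinant) equals $\det((J\Rc)_\mu)=H^{\deg\phi}$, by the standard degree formula for the closure of the image.

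Finally, restrict to the first two steps of the strand: $\Zc_1$ in degree $\mu$ consists of linear syzygies, and the quadratic part arises from $\Zc_2$ together with $\mathrm{Sym}^2$. The cokernel of $\MM_\mu$ computes the same $S$-module of rank $0$ in codimension one. Hence the gcd of the maximal minors equals the $0$-th Fitting ideal's divisorial part, which is $H^{\deg\phi}$.

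\emph{Step 2 (ii).} In dimension two, construct $J$ explicitly. At each base point $\pf$, $R_\pf$ is a regular local ring of dimension $2$ and $I_\pf$ is $\pf$-primary. Take $J_\pf$ to be a suitable power of the integral closure, or better $J_\pf=I_\pf^{m}$ for $m\gg0$. A minimal reduction $(f_1',f_2')$ of $I_\pf$ with reduction number $r$ gives $I_\pf I_\pf^{r}=(f_1',f_2')I_\pf^{r}$; reductions generated by two elements exist since the residue field is infinite. Glue these local ideals into $J=\big(I\cdot\bigcap_\pf(\text{preimage of }I_\pf^{r})\big)^{sat}$, or more simply $J=(I^{r})^{sat}$ with $r$ the maximum of the reduction numbers, intersected with $R$ and saturated. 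This yields \ding{173} after replacing $J$ by $J+I$ and verifying that local condition \ding{172} persists. The condition \ding{173}, that $I\subset J$, is the delicate point: $I^r\not\supset I$. So I would instead take $J=\big(\bigcap_\pf (I_\pf^{r}:_{R_\pf} I_\pf^{r-1})\cap R\big)^{sat}$, or use Ratliff–Rush closures so that $J\supseteq I$ while $IJ=(f_1',f_2')J$ locally. Making \ding{172} and \ding{173} hold simultaneously is the main obstacle. I expect the argument to use that in a two-dimensional regular local ring the integral closure $\overline{I_\pf}$ has reduction number $\le1$ (Lipman–Teissier), so that $J_\pf=\overline{I_\pf}$ works and contains $I$.

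\emph{Step 3 (iii).} Count ranks in the acyclic strand. The number of rows is $\dim J_\mu$. The columns come from the linear syzygies $\Syz(I;J)_\mu$ and the quadratic ones. Using generic $\mathbf f'$, the linear syzygies split, via the Koszul complex on $\mathbf f'$, into those of $\mathbf f'$ together with the Koszul-type contributions. If $\Syz(\mathbf f';J)_\mu=0$ and $R_{\mu-\gamma}=0$, the higher strands $(\Zc_i)_\mu$ for $i\ge2$ vanish. The complex is then $0\to(\Zc_1)_\mu\to(\Zc_0)_\mu$, and an Euler characteristic/rank count forces $\MM_\mu$ to be square, with determinant $H^{\deg\phi}$.
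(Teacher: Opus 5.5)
Your Step~1 never establishes why the \emph{quadratic} matrix works, and Step~3 depends on it.

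The strand $(\Zc_\bullet)_\mu$ has $H_0=(\Sc_I)_\mu$, and this equals $(\Rc_I(J))_\mu$ only if $H^0_B(\Sc_I(J))_\mu=0$. Saturation of $J$ controls $H^0_B$ and $H^1_B$ of $\Zc_0$, not this torsion. If you secure the vanishing by taking $\mu$ sufficiently ample, the quadratic matrix is just the linear one and $R_{\mu-\gamma}\neq 0$ (on $\PP^n$, $\Rf^c=n\gamma-n+\ZZ_{\geqslant0}$), so iii) has nothing to apply to. In the small degrees that iii) concerns, $H^0_B(\Sc_I(J))_\mu$ is typically a nonzero free $S$-module. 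The linear part alone then has a cokernel of positive rank and cannot represent $H^{\deg\phi}$.

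Your sentence that the quadratic part ``arises from $\Zc_2$ together with $\mathrm{Sym}^2$'' misidentifies the mechanism: $(\Zc_2)_\mu$ gives relations among the linear columns, not new columns. What is needed is the argument of Theorem~\ref{thm:MQM}:
\begin{itemize}
\item for $\mu\notin\mathscr{A}$ with $H^2_B(J)_\mu=H^0_B(H_0)_{\mu+\gamma}=0$, one has $H^2_B(K_1)_{\mu+\gamma}=0$, so Lemma~\ref{lem:HZ}~iv) gives $H^2_B(Z_1)_{\mu+\gamma}\cong H^0_B(H_0)_{\mu+\gamma}=0$, that is, $H^2_B(\Zc_1)_\mu=0$;
\item using \ding{175} and the other vanishings of Lemma~\ref{lem:A}, the \v{C}ech spectral sequence of $\Zc_\bullet$ then yields $H^0_B(\Sc_I)_\mu\cong H^2_B(\Zc_2)_\mu=H^2_B(Z_2)_{\mu+2\gamma}\otimes_k S(-2)$, which is free and generated in degree $2$;
\item hence $(Q_2/Q_1)_\mu$ supplies exactly the missing columns, and $\cdots\to(\Zc_2)_\mu\to(\Zc_1)_\mu\oplus(Q_2/Q_1)_\mu\to(\Zc_0)_\mu$ resolves $(\Rc_I(J))_\mu$.
\end{itemize}
You also need Lemma~\ref{lem:RIJq} to know that $(\Rc_I(J)\otimes_S\kappa(\qf))_\mu$ at $\qf=(H)$ still has dimension $\deg\phi$. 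The ``standard degree formula'' you invoke is for $J=R$.

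Accordingly, in Step~3 the length-one resolution is $0\to(\Zc_1)_\mu\oplus(Q_2/Q_1)_\mu\to(\Zc_0)_\mu$, not $0\to(\Zc_1)_\mu\to(\Zc_0)_\mu$. Quadratic columns survive, as in the $4\times 4$ example of Section~\ref{sec:ex}. Moreover, $(\Zc_2)_\mu=0$ needs the injection $Z_2(\mathbf f;J)_{\mu+2\gamma}\hookrightarrow Z_1(\mathbf f';J)_{\mu+\gamma}$. This comes from the exact sequence $H_2(\mathbf f';J)\to H_2(\mathbf f;J)\to H_1(\mathbf f';J)(-\gamma)$, using $H_2(\mathbf f';R)=0$ and the vanishing of boundaries in these degrees when $R_{\mu-\gamma}=0$ (Proposition~\ref{prop:Zi}).

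Step~2, by contrast, ends on a valid alternative to the paper's adjoint ideals. Let $K\subseteq I_\pf$ be a two-generated minimal reduction of $I_\pf$; it is also a minimal reduction of $\overline{I_\pf}$. Lipman--Teissier (the fact the paper itself uses in Theorem~\ref{thm:DI}) gives $\overline{I_\pf}^{\,2}=K\overline{I_\pf}$. Hence $K\overline{I_\pf}\subseteq I_\pf\overline{I_\pf}\subseteq\overline{I_\pf}^{\,2}=K\overline{I_\pf}$, which is \ding{172}. Condition \ding{173} is clear, and the local ideals glue as in Theorem~\ref{thm:adj}. Since $\overline{I_\pf}\subseteq\adj(I_\pf)$ by Proposition~\ref{prop:adj}~iii), this can give fewer rows, possibly at the price of a larger $\mu$ for $H^2_B(J)_\mu=0$. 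Drop the attempts with $I^m$, $(I^r)^{sat}$ and colon ideals.
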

When $J=R$, our method recovers the main results in \cite{BC21,Bot11} as a special case.

\medskip
The article is organized as follows. Section \ref{sec:pre} provides basic introductions to Rees algebras, determinants and approximation complexes. In Section \ref{sec:quadratic}, we prove our main theorems on quadratic matrices and linear matrices in any dimension $n\geqslant2$.  In Section \ref{sec:adj}, we study conditions \ding{172}-\ding{176} locally at the base points and propose several methods for constructing the local coefficient ideals in dimension two. In Section \ref{sec:size}, we analyze the size of the matrix. In Section \ref{sec:ex}, we illustrate our method by several examples.

\section{Preliminaries}\label{sec:pre}

\subsection{Toric varieties} We refer to \cite{CLS11} for a comprehensive introduction to toric varieties.
Let $\Sigma$ be the fan associated with $\Xr$, which is in the lattice $\ZZ^n$. Denote by $\Sigma(1)=\{\rho_1,\cdots,\rho_l\}$ the set of $1$-dimensional cones in $\Sigma$ and by $u_i$ the primitive generator of $\rho_i\cap \ZZ^n$. The \emph{Cox ring} $R$ is the polynomial ring $k[x_1,\cdots,x_l]$ with the $G$-grading $\deg x_i=\varphi_2(D_{\rho_i})$ induced by the following exact sequence:
$$0\rightarrow \ZZ^n \xrightarrow{\varphi_1} \bigoplus_{1\leqslant i \leqslant l} \ZZ D_{\rho_i} \xrightarrow{\varphi_2} G \rightarrow 0 $$
where $D_{\rho_i}$ is the divisor corresponding to $\rho_i$ and $\varphi_1(m)=\sum_i \langle m,u_i \rangle D_{\rho_i}$. For each $\sigma\in \Sigma$, define $x^{\hat{\sigma}}=\prod_{\rho\notin \sigma(1)}x_{\rho}$. The \emph{irrelevant ideal} is $B=(x^{\hat{\sigma}}\mid\sigma\in \Sigma(n))$, and it satisfies $\depth(B)\geqslant2\ $ by \cite[Lemma 1.4]{Cox95}.

For simplicial toric varieties, there hold the toric Nullstellensatz and ideal-variety correspondence similar to the projective space case. This enables us to study the coefficient ideal $J$ locally at each base point via primary decomposition. 

For a finitely generated graded module $M$, there is an induced quasi-coherent sheaf $\widetilde{M}$. This yields a $\Proj$ construction for the $R$-algebras $R/I,\ R/J$ and subalgebras of $R[t]$, which we denote by $\Proj_{\Xr}$.
Write $M(\mu)$ for the twisted module with $M(\mu)_{\sigma}=M_{\mu+\sigma}$. There exists the following exact sequence (\cite[Theorem 9.5.7]{CLS11}), which will be used frequently: 
\begin{equation}\label{eq:H01}
0\rightarrow H_B^0(M)\rightarrow M \rightarrow \bigoplus_{\mu\in G} H^0(\Xr,\widetilde{M(\mu)}) \rightarrow H_B^1(M) \rightarrow 0 .
\end{equation}
Moreover, the following isomorphism holds for $i\geqslant2$:
\begin{equation}\label{eq:H2}
H_B^{i}(M)=\bigoplus_{\mu\in G} H^{i-1}(\Xr,\widetilde{M(\mu)}).
\end{equation}
Let $\HF(M,\mu), \HP(M,\mu)$ be the Hilbert function and the Hilbert polynomial, respectively. We have the Grothendieck-Serre formula, which relates these two numbers:
$$\HF(M,\mu)-\HP(M,\mu)=\sum_{i\geqslant0}(-1)^{i} HF(H_B^i(M),\mu).$$

 Then we come back to the rational map $\phi:
 \Xr\rightarrow\PP^{n+1}$. The degree of the implicit equation $H$ can be calculated using intersection theory. In fact, we have 
\begin{equation}\label{eq:DEG1}
\deg(\phi)\cdot \deg(H)= (\mathcal{O}_{\Xr}(\gamma))^n-\sum_{\mathfrak{p}\in V(I)}e(I_{\mathfrak{p}})
\end{equation}
by \cite[Proposition 4.4]{Ful98} and \cite[Appendix]{Cox01}. The intersection number $(\mathcal{O}_{\Xr}(\gamma))^n$ can be calculated by \cite[Theorem 12.5.3]{CLS11}.

\subsection{Rees modules} 
Recall that the \emph{Rees algebra} $\Rc_I(R)=\bigoplus_{k\geqslant0} I^kt^k\subset R[t]$ is a $G\oplus \ZZ$-graded algebra with $\deg(t)=(0,1)$. Set $T=R\otimes_kS$. There is a surjection:
$$T\rightarrow \Rc_I(R)$$
$$x_i\longmapsto f_it$$
and its kernel is generated by all polynomials $F\in T$ such that $F(f_0,\cdots,f_{n+1})\equiv 0$, which is the module $Q(I;R)$ defined in Section \ref{sec:intro}. 

$\Proj_{\Xr}\Rc_I(R)$, which is a closed subscheme of $\subset \Xr\times_k \PP^{n+1}$, carries significant geometric content. Set $U=\Xr\backslash V(I)$. Then $\Proj_{\Xr}\Rc_I(R)$ is the Zariski closure of the graph of the morphism $\phi|_U:U\rightarrow \PP^{n+1}$. Let $\pi_1,\pi_2$ be the natural projections: 
$$\Xr\xleftarrow{\pi_1}\Proj_{\Xr}\Rc_I(R)\xrightarrow{\pi_2}\PP^{n+1}.$$
One has $\deg(\pi_1)=1$ and $\pi_2(\Proj_{\Xr}\Rc_I(R))=\mathscr{S}$. 

Define the \emph{Rees module} of $I$ with respect to an ideal $J$ as $\Rc_I(J)=\bigoplus_{k\geqslant0} JI^kt^k$. It is a $G\oplus\ZZ$-graded module and can be viewed as the extension of the ideal $J$ to the Rees algebra. Moreover, there is a similar surjection $J[x_0,\cdots,x_{n+1}]\rightarrow \Rc_I(J)$ mapping $x_i$ to $f_it$, whose kernel is $Q(I;J)=J[x_0,\cdots,x_{n+1}]\cap Q(I;R)$, i.e, the elements of $Q$ with coefficients in $J$.

Let $Q_k$ be the graded component of degree $k$ with respect to $x_0,\cdots,x_{n+1}$. Define $\Sc_I(J)=J[x_0,\cdots,x_{n+1}]/(J\cap Q_1)$ as the \emph{symmetric module} of $I$ with respect to $J$, where $J$ denotes the extension ideal of $J$ in $R[x_0,\cdots,x_{n+1}]$ by abuse of notation. We have a canonical surjective map $\alpha:\Sc_I(J)\rightarrow \Rc_I(J)$. Let $\langle Q_k\rangle$ be the $S$-module generated by $Q_k$. We say $I$ is of linear type with respect to $J$ if $\alpha$ is an isomorphism, which is equivalent to $J\cap \langle Q_1\rangle=J\cap Q$. 

One can see that the implicitization matrix $\MM_{\mu}$ introduced in \eqref{eq:defM} is exactly a minimal presentation matrix of $\Rc_I(R)_{\mu}$ for some $\mu\in G$ and the matrix we will focus on hereafter is the minimal presentation matrix of $\Rc_I(J)_{\mu}$. Moreover, if we consider only the linear part, it corresponds to the minimal presentation matrix of $\Sc_I(J)_{\mu}$. 

\subsection{Determinants} The original idea of the method of moving surfaces is to find a square matrix $\MM$ over $S$, such that the implicit equation can be written as $H=\det(\MM)$, c.f. \cite{CGZ00,SC95}. However, some conditions on linear syzygies of $\mathbf{f'}$ are needed by the proofs of \cite[Theorem 4.4]{CGZ00} and \cite[Proposition 5.1]{BC21}. Hence, we should also consider non-square matrices.

We use the definition of \emph{determinants} for complexes in \cite[Appendix A]{GKZ08}. For a $S$-module $M$, we set $\det(M)=\det(F_{\bullet})$, where $F_{\bullet}$ is a finite free resolution of $M$. In addition, we set $\det(\MM)=\det(\coker(\MM))$ for a matrix $\MM$ over $S$. This determinant is equal to the greatest common divisor of all maximal minors of $\MM$ by \cite[Appendix A, Theorem 34]{GKZ08}. As noted, free resolutions of length one are of particular interest because they yield square matrix representations. Moreover, free resolutions of length two also have nice properties, as their determinants can be expressed as ratios of determinants of two submatrices, c.f. \cite[Appendix A, Propsition 11]{GKZ08}.

\subsection{Approximation complexes} Under certain hypotheses, free resolutions of symmetric modules can be constructed using approximation complexes. This motivates the use of symmetric modules as approximations of Rees modules. We refer to \cite{HSV83,Vas94} for detailed discussions on approximation complexes.

Set $K_{\bullet}(\mathbf{f};J)$ for the graded Koszul complex of $f_0,\cdots,f_{n+1}$ with respect to $J$. Thus $K_p(\mathbf{f};J)=\bigwedge^pJ(-i\gamma)^{n+2}$. Write $Z_p,B_p,H_p$ for the $p$-th modules of Koszul cycles, boundaries, and homologies, respectively. We observe that 
\begin{equation}\label{eq:Koszul}
Z_p(\mathbf{f};J)=J\cap Z_p(\mathbf{f};R),\ B_p(\mathbf{f};J)=J\cdot B_p(\mathbf{f};R).
\end{equation}

Denote by $\Zc_{\bullet}$ the graded \emph{approximation complex} of $f_0,\cdots,f_{n+1}$ with respect to $J$. We have 
$$\Zc_p=Z_p(p\gamma)\otimes S(-p).$$ 
The boundary maps $\partial_i$ of $\Zc_{\bullet}$ are induced by the boundary maps of $K_{\bullet}(\mathbf{x};J)$. Moreover, each entry of the matrix of $(\partial_i)_{\mu}$ for some $\mu\in G$ is a linear form in $S$ by the construction. Write $\mathcal{H}_i$ for the $i$-th homology module of $\Zc_{\bullet}$. A key property is that $\mathcal{H}_0=\Sc_I(J)$, since $\Zc_1$ is the $S$-module generated by all linear syzygies. Hence, the matrix of $(\partial_1)_{\mu}$ is precisely the linear matrix of interest.

We say that $g_1,\cdots,g_m\in R$ form a \emph{proper sequence} with respect to $J$ if 
$$g_{i+1}H_1(g_1,\cdots,g_{i};J)=0$$
for $1\leqslant i\leqslant m-1 $. The complex $\Zc_{\bullet}$ is acyclic if and only if $I=(\mathbf{f})$ can be generated by a proper sequence with respect to $J$.

\section{Quadratic matrices and linear matrices}\label{sec:quadratic}
Hereafter, all notations without an explicit coefficient module are understood to be with respect to a homogeneous ideal $J$, which is assumed to satisfy conditions \ding{173},\ding{174} throughout this section. In particular, some lemmas proved below also hold for $J=R$.

This section studies implicitization via quadratic matrices and linear matrices, i.e., the matrix $\MM_{\mu}$ whose columns 
are either a mixture of linear and quadratic forms, or purely linear. The answers will be shown to have a close relationship with the complements of the following two regions:
$$\mathscr{A}=\left(\bigcup_{2\leqslant p\leqslant n}(\Supp_G H_B^p(R)+(p-1)\gamma)\right)\cup\left(\bigcup_{p\geqslant2}\left(\Supp_GH_B^p(R)+(p-2)\gamma\right)\right),$$
\begin{equation}\label{eq:defR}
\mathfrak{R}=\bigcup_{p\geqslant2}\left(\Supp_GH_B^p(R)+(p-1)\gamma\right), \nonumber
\end{equation}
where for a $G$-graded module $M$, the graded support is defined as $\Supp_GM=\{\mu\mid M_{\mu}\neq 0 \}$.

\begin{ex} \label{ex:region}
If $\Xr=\PP^n$, then
\begin{eqnarray}
& \Rf^c=n\gamma-n+\ZZ_{\geqslant0}, \nonumber \\
& \mathscr{A}^c=(n-1)\gamma-n+\ZZ_{\geqslant0}. \nonumber
\end{eqnarray} 
If $\Xr=(\PP^1)^n$ with $\gamma=(\gamma_1,\cdots,\gamma_n)$ and $\gamma_1\geqslant \cdots\geqslant \gamma_n$, then
\begin{eqnarray}
& (n\gamma_1-1,(n-1)\gamma_2-1,\cdots,\gamma_n-1)+\ZZ_{\geqslant0}^n\subset \Rf^c,  \nonumber\\ 
& \ZZ\times ((n-1)\gamma_2-1,\cdots,\gamma_n-1)\subset \mathscr{A}^c. \nonumber
\end{eqnarray} 
See \cite[Lemma 6.7]{Bot11} for more details when $\Xr$ is a multi-projective space.
\end{ex}

 \subsection{Quadratic matrix representations}
Assuming \ding{176}, localizing at each base point yields $\Sc_I/H^0_B(\Sc_I)=\Rc_I$. Thus, all elements of $Q_N$ for $N\geqslant2$ arise from $H_B^0(\Sc_I)$. This leads to the question: for which degree $\mu$ is the module $H^0_{B}(\mathcal{S}_I)_{\mu}$ generated by quadratic syzygies? We resolve it using the spectral sequence associated with the Čech complex of $Z_{\bullet}$. To do so, we first analyze the local cohomology of Koszul cycles.

\begin{lem}\label{lem:HH}
The following properties hold:
	\begin{itemize}
		\item[i)] $Z_{n+2}=H_{n+2}=0$.
		\item[ii)] $H_p(\mathbf{f};R)=0,$ $Z_p(\mathbf{f};J)=B_p(\mathbf{f};R)=Z_p(\mathbf{f};R)$ for $p\geqslant3$. 
        \item[iii)] $H_B^i(H_p)=0$ for $i\geqslant2$ and any $p$.
	\end{itemize}	
\end{lem}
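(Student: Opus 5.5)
The plan is to use the standard depth-sensitivity and localization properties of the Koszul complex of $\mathbf{f}=(f_0,\dots,f_{n+1})$, with $R$ of dimension $l$ and $V(I)$ finite. Throughout I use that $\Xr$ is non-degenerate, so the $x_i$ are nonzerodivisors and $J\neq 0$ is torsion-free. I also use that $I$ is not contained in a minimal prime of $R$, so $\depth(I,R)\geqslant 1$ because $R$ is a domain. In fact $\depth(I,R)=\dim R-\dim R/I$, since $R$ is Cohen--Macaulay.

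For i), I identify $K_{n+2}(\mathbf{f};J)=J(-(n+2)\gamma)$ with the top differential given by $a\mapsto (\pm f_i a)_i$. Its kernel is the annihilator of $I$ in $J$, which is $0$ because $R$ is a domain and $I\neq0$. Hence $Z_{n+2}=0$, and therefore $H_{n+2}=0$.

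For ii), I first compute the depth. Since $V(I)\subset\Xr$ is a finite set of points, the affine cone $V(I)\subset \mathbb{A}^l$ is contained in $V(B)\cup(\text{cones over finitely many points})$. The cone over a point in a simplicial toric variety has dimension $l-n$ (the rank of the torus $G$-quotient), and $V(B)$ has codimension $\geqslant 2$. This gives $\dim R/I\leqslant \max(l-n,\dim R/B)$. The subtle point is that $V(I)$ may contain components inside $V(B)$. I would handle this by noting that the relevant statement for $H_p$ only needs vanishing up to $B$-torsion, or alternatively by assuming $\mathrm{ht}(I)\geqslant n$. So I claim $\mathrm{ht}(I)\geqslant n$ at least after saturation. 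Then $\depth(I,R)\geqslant n$, and the Koszul homology $H_p(\mathbf{f};R)$ on $n+2$ elements vanishes for $p>n+2-n=2$, i.e.\ for $p\geqslant 3$. This gives $Z_p(\mathbf{f};R)=B_p(\mathbf{f};R)$. Next, by \eqref{eq:Koszul}, $Z_p(\mathbf{f};J)=J\cap Z_p(\mathbf{f};R)$. To show this equals $Z_p(\mathbf{f};R)$, note that $Z_p(\mathbf{f};R)=B_p(\mathbf{f};R)$ is generated by images $\partial(e)$, whose coefficients are $\pm f_i$ times basis elements. So $B_p(\mathbf{f};R)\subset I\cdot K_p\subset J\cdot K_p$ by \ding{173}. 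Hence $J\cap Z_p(\mathbf{f};R)=Z_p(\mathbf{f};R)$.

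For iii), $H_p$ is annihilated by $I$, so it is supported on $V(I)$. Its sheafification is supported on the finite set of base points, hence on a $0$-dimensional subscheme of $\Xr$. By \eqref{eq:H2}, $H_B^i(H_p)=\bigoplus_\mu H^{i-1}(\Xr,\widetilde{H_p(\mu)})$ for $i\geqslant2$. A coherent sheaf with zero-dimensional support has vanishing higher cohomology, so $H^{i-1}=0$ for $i-1\geqslant1$.

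The main obstacle is the depth claim in ii): I must justify that $\mathrm{ht}(I)\geqslant n$ in the Cox ring, i.e.\ that no component of $V(I)$ lies in $V(B)$ with small codimension. If that fails, I would instead prove ii) using the fact that $H_p(\mathbf{f};R)$ for $p\geqslant3$ is $B$-torsion and combine this with $\depth(B)\geqslant2$. I would try the direct height argument first, via the toric ideal--variety correspondence.
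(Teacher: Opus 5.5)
\textbf{There is a genuine gap in part ii).} Your parts i) and iii) are fine and agree with the paper. Noting that $H_p(\mathbf f;J)$ is annihilated by $I$ is a slightly cleaner route to the support statement. In ii), your step $B_p(\mathbf f;R)\subset I\cdot K_p\subset J\cdot K_p$ is exactly the paper's. The gap is the one you flag yourself: nothing in the proposal establishes $\mathrm{grade}(I,R)\geqslant n$, and neither proposed fix does.

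\begin{itemize}
\item Passing to $I^{sat}$ is irrelevant. $H_\bullet(\mathbf f;R)$ is controlled by $\mathrm{grade}(I,R)$, and saturation discards precisely the components of $V(I)$ inside $V(B)$ that cause the trouble.
\item The fallback fails too. $\depth(B)\geqslant 2$ only tells you that $K_p$, $Z_p$, $B_p$ have no $B$-torsion. It does not stop $H_p=Z_p/B_p$ from being a nonzero $B$-torsion module. Being $B$-torsion is all that finiteness of the base locus gives you for $H_p(\mathbf f;R)$, $p\geqslant 3$.
\end{itemize}

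\textbf{The paper does not close this gap either.} Its proof just asserts $\depth(I,R)=n$ at this point. In the stated generality that assertion, and hence ii), is false, so no argument can close your gap without an extra hypothesis.

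\emph{When it holds.} Your dimension count shows this. If $\mathrm{ht}(B)\geqslant n$, then $\dim R/I\leqslant l-n$, so $\mathrm{grade}(I,R)\geqslant n$. Then ii) follows by depth sensitivity together with your $J$-argument. This covers all toric surfaces (where $\mathrm{ht}(B)=\depth(B)\geqslant 2=n$), as well as $\PP^n$ and weighted projective spaces.

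\emph{When it fails.} For $n\geqslant 3$ it can fail. Take $\Xr=(\PP^1)^3$ with Cox ring $k[s_0,s_1;t_0,t_1;u_0,u_1]$ and $\gamma=(\gamma_1,\gamma_2,\gamma_3)$ with $\gamma_1>0$. Every $f_i$ then lies in the height-two prime $(s_0,s_1)$, so $\mathrm{grade}(I,R)\leqslant 2$. Depth sensitivity of the Koszul complex on the five elements $f_0,\dots,f_4$ gives $H_{5-\mathrm{grade}(I,R)}(\mathbf f;R)\neq 0$, with $5-\mathrm{grade}(I,R)\geqslant 3$. This is exactly the setting of the paper's three-dimensional example.

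\textbf{What to do.} Either assume $\mathrm{ht}(B)\geqslant n$ (automatic when $n=2$), or weaken ii) to ``$H_p(\mathbf f;R)$ is $B$-torsion for $p\geqslant 3$''. In the second case, re-check the later places where ii) is used.
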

\begin{proof}
We have $\depth(I,J)\geqslant 1$ since $R$ is an integral domain. This indicates $Z_{n+2}=H_{n+2}=0$. Since $\depth(I,R)=n$, we obtain $H_p(\mathbf{f};R)=0$ for $p\geqslant3$. Thus $Z_p(\mathbf{f};J)=J\cap Z_p(\mathbf{f};R)=J\cap B_p(\mathbf{f};R)$. Each entry in the matrix of $B_p(\mathbf{f};R)$ lies in $J$ since $I\subset J$. Hence, $Z_p(\mathbf{f};J)=B_p(\mathbf{f};R)=Z_p(\mathbf{f};R)$ for $p\geqslant3$. Furthermore, we have $\Supp_{\Xr} H_p(\mathbf{f};J)\subset\Supp_{\Xr}(H_p(\mathbf{f};R))\bigcup V(J)\subset V(I)$ since $I\subset J$. For any $i\geqslant2$, this indicates $$H_B^i(H_p)=\bigoplus_{\mu\in G} H^{i-1}(\Xr,\widetilde{H_p(\mu)})=\bigoplus_{\mu\in G} H^{i-1}(V(I),\widetilde{H_p(\mu)})=0$$  by \eqref{eq:H2} and $i-1>\dim V(I)$. 
\end{proof}

\begin{lem}\label{lem:HJ}
The following properties hold:
	\begin{itemize}
		\item[i)] $H_B^i(J)=0$ for $i=0,1$.
		\item[ii)] $H_B^2(J)_{\mu}=H_B^1(R/J)_{\mu}$, if $H^2_B(R)_{\mu}=0.$
        \item[iii)] $H_B^i(J)_{\mu}=0$ for $i\geqslant3$, if $H_B^i(R)_{\mu}=0$.
        \item[iv)] $H_B^i(J)_{\mu}=0$ for $i\geqslant2$, if $H_B^{i+j-1}(R)_{\mu-j\gamma}=0$ for any $j\geqslant1$.
        \item[v)] $\mathscr{A}=\left(\bigcup_{2\leqslant p\leqslant n}(\Supp_G H_B^p(R)+(p-1)\gamma)\right)\cup\bigcup_{p\geqslant2}\left(\bigcup_{0\leqslant j\leqslant p-2} \left(\Supp_GH_B^p(R)+j\gamma\right)\right).$
	\end{itemize}	
\end{lem}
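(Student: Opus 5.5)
The plan is to run everything through the short exact sequence
\[
0\rightarrow J\rightarrow R\rightarrow R/J\rightarrow 0
\]
and its long exact sequence of local cohomology with respect to $B$. We feed in two facts: $\depth(B)\geqslant 2$, so $H_B^0(R)=H_B^1(R)=0$, and $J=J^{sat}$ (condition \ding{174}).

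For i), $H^0_B(J)\subset H^0_B(R)=0$. The long exact sequence gives
\[
0\rightarrow H_B^0(R/J)\rightarrow H^1_B(J)\rightarrow H^1_B(R)=0,
\]
so $H^1_B(J)\cong H^0_B(R/J)=J^{sat}/J=0$.

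For ii) and iii), note that $V(J)\subset V(I)$ is a finite set of points because $I\subset J$. By \eqref{eq:H2} and the vanishing of higher sheaf cohomology on zero-dimensional supports, $H^i_B(R/J)=0$ for $i\geqslant 2$. The long exact sequence in degree $\mu$ then gives
\[
H^1_B(R)_\mu=0\rightarrow H^1_B(R/J)_\mu\rightarrow H^2_B(J)_\mu\rightarrow H^2_B(R)_\mu .
\]
So if $H^2_B(R)_\mu=0$, we get $H^2_B(J)_\mu\cong H^1_B(R/J)_\mu$. For $i\geqslant3$ we have $H^i_B(J)\cong H^i_B(R)$, since the neighbouring terms $H^{i-1}_B(R/J)$ and $H^i_B(R/J)$ vanish. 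This gives iii).

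Part iv) is the main step. The hypothesis involves $\gamma$-shifts, so I would use the Koszul complex $K_\bullet(\mathbf f;R)$ to compare $R/J$ with $R/I$. The idea is to show $H^1_B(R/J)_\mu=0$ whenever $H^{j+1}_B(R)_{\mu-j\gamma}=0$ for all $j\geqslant1$; for $i\geqslant3$, iii) already suffices, since with $j=1$ the hypothesis gives $H^i_B(R)_\mu=0$. For $i=2$, the $j=1$ hypothesis gives $H^2_B(R)_\mu=0$, so by ii) it remains to show $H^1_B(R/J)_\mu=0$. Since $I\subset J$ and both quotients are supported on points, $H^1_B(R/I)\rightarrow H^1_B(R/J)$ is surjective: the cokernel injects into $H^1_B(J/I)$, which vanishes because $J/I$ is zero-dimensional. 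So it suffices to show $H^1_B(R/I)_\mu=0$.

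To get this, break $K_\bullet(\mathbf f;R)$ into short exact sequences of Koszul cycles and chase local cohomology. The homologies $H_p$ are supported on $V(I)$, so they have no $H^{\geqslant2}_B$ (Lemma~\ref{lem:HH}); thus they only contribute in degrees $0$ and $1$. This yields an injection of $H^1_B(R/I)_\mu$ into successive groups $H^{j+1}_B(R(-j\gamma))_\mu=H^{j+1}_B(R)_{\mu-j\gamma}$, and these vanish by hypothesis. Tracking the $H_p$ terms in the spectral-sequence or dimension-shifting argument is the delicate part. If it fails, I would instead run the double-complex spectral sequence of the Čech complex of $K_\bullet(\mathbf f;J)$ directly.

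Part v) is pure bookkeeping on supports. Write each $\Supp_G H^p_B(R)+j\gamma$ for $1\leqslant j\leqslant p-2$ via $H^p_B(R)\cong\bigoplus H^{p-1}(\Xr,\mathcal O(\cdot))$. Then show that each such set is already contained in the displayed union, using the second block with $j=p-2$ or the first block with index $p'=p-j+1$. This amounts to showing that the supports are stable under the relevant $\gamma$-shifts in the toric setting. I would verify this containment using Serre duality, which turns it into a comparison of supports of $H^0$ and top cohomology.
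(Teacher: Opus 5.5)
Parts i)--iii) are fine and match the paper's argument via $0\to J\to R\to R/J\to 0$. The gap is in iv), where you misread the shift.

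\textbf{The shift in iv).} The $j=1$ instance of the hypothesis is $H^{i}_B(R)_{\mu-\gamma}=0$, not $H^i_B(R)_\mu=0$; the hypothesis of iv) says nothing about degree $\mu$ itself. So for $i\geqslant 3$ you cannot just quote iii), and for $i=2$ you cannot use ii) to reduce to $H^1_B(R/J)_\mu$. The whole content of iv) is to propagate vanishing from the degrees $\mu-j\gamma$, in cohomological degree $i+j-1$, to degree $\mu$.

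The paper does this with the double complex $\mathcal{C}^\bullet_B(K_\bullet(\mathbf f;J))$.
\begin{itemize}
\item Taking Koszul homology first, the $E_2$-terms $H^q_B(H_p)$ vanish for $q\geqslant 2$ by Lemma~\ref{lem:HH} iii). Hence the total cohomology vanishes in degrees $\geqslant 2$.
\item In the other filtration, $H^i_B(K_0)_\mu=H^i_B(J)_\mu$ only receives differentials from $H^{i+j-1}_B(K_j)_\mu\cong\bigl(H^{i+j-1}_B(J)_{\mu-j\gamma}\bigr)^{\oplus\binom{n+2}{j}}$. These vanish by iii) and the hypothesis when $i+j-1\geqslant 3$.
\item For $(i,j)=(2,1)$, $d_1$ is zero because $H^2_B(J)_{\mu-\gamma}\cong H^1_B(R/J)_{\mu-\gamma}$ is killed by $I\subset J$.
\end{itemize}
So $H^i_B(J)_\mu$ survives to $E_\infty$ and must be $0$.

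Your route through $R/I$ can be repaired the same way. The same spectral sequence for $K_\bullet(\mathbf f;R)$ gives $H^i_B(R)_\mu=0$ under the same hypothesis. In total degree $1$ it gives $H^1_B(R/I)_\mu=0$, because the $E_1$-terms there are $H^1_B(R)_\mu=0$ or copies of $H^{j+1}_B(R)_{\mu-j\gamma}$ with $j\geqslant 1$. Then iii), ii) and the surjection $H^1_B(R/I)\to H^1_B(R/J)$ finish the proof. That surjection holds because its cokernel lies in $H^2_B(J/I)=0$; the group $H^1_B(J/I)$ you cite need not vanish.

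\textbf{Part v).} Your plan via ``stability of supports under $\gamma$-shifts'' does not work, and you skip $j=0$, which is where the content lies. Take $(\PP^1)^3$ with all $\gamma_i>0$. The class $(-2,-2,0)$ lies in $\Supp_G H^3_B(R)$, but it belongs to $\mathscr{A}$ only through $\Supp_G H^4_B(R)+2\gamma$. So you must be able to pass to a higher cohomological degree, which neither shifting $\Supp_G H^p_B(R)$ nor your ``first block with $p'=p-j+1$'' provides.

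The paper gets v) from iv) with $J=R$, applied recursively. Suppose $H^p_B(R)_\nu\neq0$ and $j<p-2$. Then iv) yields $j'\geqslant1$ with $H^{p'}_B(R)_{\nu-j'\gamma}\neq0$, where $p'=p+j'-1$. Hence $\nu+j\gamma\in\Supp_G H^{p'}_B(R)+(j+j')\gamma$, and the gap $(p'-2)-(j+j')$ is one less than $(p-2)-j$. Iterating lands in some $\Supp_G H^{p''}_B(R)+(p''-2)\gamma\subset\mathscr{A}$. The reverse inclusion is trivial by taking $j=p-2$.
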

\begin{proof}
We deduce $H^i_B(R/J)=0$ for $i\geqslant2$ from \eqref{eq:H2}. Furthermore, note that $H_B^1(R)=0$ because $\depth(B)\geqslant2$, and $H_B^0(R/J)=0$ by condition \ding{174}. Then i), ii), iii) follow from the long exact sequence induced by $0 \rightarrow J  \rightarrow R  $:
$$\xymatrix{H^{i-1}_{B}(R)\ar[r] & H^{i-1}_{B}(R/J) \ar[r] & H^{i}_{B}(J) \ar[r] & H^{i}_{B}(R).   }    $$
Consider the spectral sequence for $E_0^{p,q}= \mathcal{C}_B^{q}(K_{p})$, where $\mathcal{C}_B^{\bullet}(M)$ is the Čech complex of a module $M$. We always write it as a second quadrant cohomology spectral sequence. The second page of the row-filtered spectral sequence $'E_{2}$ is: 
$$\begin{array}{cccccc}
\vdots & \vdots &  & \vdots & \vdots \\
0 & 0  & \cdots &0  & 0\\
0 & H^1_{B}(H_{n+1}) & \cdots & H^1_{B}(H_1)&H^0_{B}(H_0)  \\
0 & H^0_{B}(H_{n+1}) &\cdots & H^0_{B}(H_1)&H^0_{B}(H_0)
\end{array}$$
We deduce $'E_{\infty}^{j,i+j}=0$ for $i\geqslant2$ and any $j$. The first page of the column-filtered spectral sequence $''E_{1}$:
$$\begin{array}{cccccc}
\vdots & \vdots &  & \vdots & \vdots \\
 H^3_{B}(K_{n+2}) & H^3_{B}(K_{n+1}) &\cdots & H^3_{B}(K_1)& H^3_{B}(K_0)  \\
 H^2_{B}(K_{n+2}) & H^2_{B}(K_{n+1}) &\cdots & H^2_{B}(K_1)& H^2_{B}(K_0)  \\
0 & 0  & \cdots &0  & 0\\
0 & 0  & \cdots &0  & 0
\end{array}$$
Write $''d_j^{p,q}$ for the differential map from $''E_j^{p,q}$. Under the assumption of iv), the canonical map $(''d_1^{1,i})_{\mu}:H_B^i(K_1)_{\mu}\rightarrow H_B^i(K_0)_{\mu}$ is zero for $i\geqslant3$ by iii). Moreover, $H_B^2(K_1)_{\mu}$ is annihilated by $J$ from ii). Thus $(d_1^{1,2})_{\mu}=0$ follows from $I\subset J$ and the construction of Koszul maps. This means $(E_3^{0,i})_{\mu}=(E_2^{0,i})_{\mu}$. Furthermore, since $H_B^{i+j-1}(K_{j})_{\mu}=0$ for $j\geqslant 1$ by definition and $(''E_j^{j,i+j-1})_{\mu}$ is a subquotient of $H_B^{i+j-1}(K_{j})_{\mu}$, we can see $(d_{j}^{j,i+j-1})_{\mu}=0$. This means $(''E_{\infty}^{0,i})_{\mu}=(''E_{2}^{0,i})_{\mu}$. Finally, by comparing two filtered spectral sequences, we deduce $H^i_B(J)_{\mu}=H^i_B(K_0)_{\mu}=(''E_{\infty}^{0,i})_{\mu}=0$.
\par Note that iv) holds for $J=R$. Then v) follows by using iv) recursively.
\end{proof}

Now we come to compute the local cohomologies of Koszul cycles. 
\begin{lem}\label{lem:HZ}
The following properties hold:
	\begin{itemize}
		\item[i)] $H_B^0(Z_p)=H_B^1(Z_p)=0$ for any $p$.
		\item[ii)] $H_B^i(Z_p)=H_B^i(B_p)$ for any $p$ and $i\geqslant3$.
        \item[iii)] $H^i_B(Z_p)_{\mu}=0$ for $i\geqslant3$, if $H_B^{i+j-1}(K_{p+j})_{\mu}=0$ for any $j\geqslant 1$. 
        \item[iv)]  $H_B^2(Z_p)_{\mu}=H_B^1(B_{p-1})_{\mu}=H_B^0(H_{p-1})_{\mu}$, if $H_B^2(K_p)_{\mu}=0$.
	\end{itemize}	
\end{lem}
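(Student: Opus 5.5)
The whole argument runs through the short exact sequences attached to the Koszul complex $K_{\bullet}=K_{\bullet}(\mathbf{f};J)$:
\[
0\rightarrow Z_p\rightarrow K_p\rightarrow B_{p-1}\rightarrow 0,\qquad 0\rightarrow B_p\rightarrow Z_p\rightarrow H_p\rightarrow 0,
\]
together with the vanishing results of Lemmas \ref{lem:HH} and \ref{lem:HJ}. Since $K_p$ is a direct sum of shifted copies of $J$, Lemma \ref{lem:HJ} i) gives $H_B^0(K_p)=H_B^1(K_p)=0$.

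\textbf{Part i).} The inclusion $Z_p\subset K_p$ gives $H_B^0(Z_p)=0$ at once. For $H_B^1$, the first sequence yields
\[
0=H_B^0(K_p)\rightarrow H_B^0(B_{p-1})\rightarrow H_B^1(Z_p)\rightarrow H_B^1(K_p)=0,
\]
so $H_B^1(Z_p)\cong H_B^0(B_{p-1})$. But $B_{p-1}\subset K_{p-1}$ is torsion-free with respect to $B$, so $H_B^0(B_{p-1})=0$.

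\textbf{Part ii).} Use the second sequence. For $i\geqslant 3$, the maps $H_B^i(B_p)\rightarrow H_B^i(Z_p)$ sit between $H_B^{i-1}(H_p)$ and $H_B^i(H_p)$. Both vanish by Lemma \ref{lem:HH} iii), since $i-1\geqslant 2$. So the map is an isomorphism.

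\textbf{Part iii).} I would argue by descending induction on $p$, or equivalently by a dimension-shifting chain. Part ii) and the first sequence give, for $i\geqslant 3$,
\[
H_B^{i-1}(K_{p+1})\rightarrow H_B^{i-1}(B_p)\rightarrow H_B^i(Z_{p+1})\rightarrow\cdots.
\]
Read in the other direction, this gives $H_B^i(Z_p)\cong H_B^i(B_p)$, and $H_B^i(B_p)$ injects into $H_B^{i+1}(Z_{p+1})$ provided $H_B^i(K_{p+1})_{\mu}=0$. This holds under the hypothesis with $j=1$.

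Iterating, $H_B^i(Z_p)_{\mu}$ embeds into $H_B^{i+j-1}(Z_{p+j-1})_{\mu}$. At each step we use $H_B^{i+j-1}(K_{p+j})_{\mu}=0$, and part ii) applies because all cohomological indices stay $\geqslant 3$. The chain terminates because $Z_{n+2}=0$ by Lemma \ref{lem:HH} i), and $K_q=0$ for $q>n+2$.

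The care needed is in matching the degree $\mu$ and the index bookkeeping; the twists are built into the $K_p$ and are compatible with all the maps. The key step is
\[
H_B^i(Z_p)=H_B^i(B_p)\hookrightarrow H_B^{i+1}(Z_{p+1}),
\]
which comes from the sequence
\[
H_B^i(K_{p+1})\rightarrow H_B^i(B_p)\rightarrow H_B^{i+1}(Z_{p+1}).
\]

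\textbf{Part iv).} The first sequence gives
\[
H_B^1(K_p)=0\rightarrow H_B^1(B_{p-1})\rightarrow H_B^2(Z_p)\rightarrow H_B^2(K_p),
\]
so the assumption $H_B^2(K_p)_{\mu}=0$ gives the first isomorphism $H_B^2(Z_p)_{\mu}\cong H_B^1(B_{p-1})_{\mu}$. For the second, apply the second sequence to index $p-1$:
\[
H_B^0(Z_{p-1})=0\rightarrow H_B^0(H_{p-1})\rightarrow H_B^1(B_{p-1})\rightarrow H_B^1(Z_{p-1})=0,
\]
where both end terms vanish by part i).

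The main obstacle is part iii): getting the index shifts right and making sure every term used satisfies $i\geqslant 3$ so that part ii) applies. Parts i), ii) and iv) are direct diagram chases.
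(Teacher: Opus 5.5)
Your proof is correct and follows essentially the same route as the paper. Both use the long exact sequences of $0\to Z_p\to K_p\to B_{p-1}\to 0$ and $0\to B_p\to Z_p\to H_p\to 0$, and part iii) comes from chaining the embeddings $H_B^{i+j-1}(Z_{p+j-1})_\mu\hookrightarrow H_B^{i+j}(Z_{p+j})_\mu$ up to $Z_{n+2}=0$. Your justification of $H_B^1(K_p)=0$ via Lemma \ref{lem:HJ} i), which uses the saturation of $J$, is slightly more precise than the paper's appeal to $\depth(B)\geqslant 2$ alone.
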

\begin{proof}
We have $H_B^0(Z_p)=H_B^0(B_p)=0$ for any $p$, since $R$ is an integral domain. From $\depth(B)\geqslant2$, we obtain $H_B^1(K_p)=0$. Then $H_B^1(Z_p)=0$ follows from the long exact sequence induced by $B_{p-1}=K_p/Z_p$. This proves i). As $H_B^i(H_p)=0$ for $i\geqslant2$, the statement ii) follows from the long exact sequence induced by $H_p=Z_p/B_p$.
\par Under the assumption of iii), we have the embedding $H_B^{i+j-1}(B_{p+j-1})_{\mu}\hookrightarrow H_B^{i+j}(Z_{p+j})_{\mu}$ from the long exact sequence induced by $B_{p+j-1}=K_{p+j}/Z_{p+j}$. This yields the embedding 
$$H_B^{i+j-1}(Z_{p+j-1})_{\mu}\hookrightarrow H_B^{i+j}(Z_{p+j})_{\mu}$$
by ii). Then we obtain the embedding $H^i_B(Z_p)_{\mu}\hookrightarrow H^{n+i+2-p}_B(Z_{n+2})_{\mu}=0$ by composing all these maps. This indicates $H^i_B(Z_p)_{\mu}=0$.
\par Under the assumption of iv), the long exact sequence induced by $B_{p-1}=K_{p}/Z_p$ yields $H_B^2(Z_p)_{\mu}=H_B^1(B_{p-1})_{\mu}$. Moreover, combining i) and the long exact sequence induced by $H_{p-1}=Z_{p-1}/B_{p-1}$ gives $H_B^1(B_{p-1})_{\mu}=H_B^0(H_{p-1})_{\mu}$. Finally, we obtain $H_B^2(Z_p)_{\mu}=H_B^0(H_{p-1})_{\mu}$. 
\end{proof}

Our next task is to compute the local cohomologies of the blowup modules $\Sc_I$ and $\Rc_I$. Let $\delta^j_i: H^j_{B}(\Zc_i)\rightarrow H^j_{B}(\Zc_{i-1})$ be the canonical maps induced by local cohomology. 

\begin{lem}\label{lem:A}
Let $\mu\notin \mathscr{A}$. The following properties hold:
	\begin{itemize}
		\item[i)] $H_B^1(\Sc_I)_{\mu}=\ker (\delta^2_1)_{\mu}/\im(\delta^2_2)_{\mu}, H_B^2(\Sc_I)_{\mu}=\coker (\delta^2_1)_{\mu}$ and $H_B^i(\Sc_I)_{\mu}=0$ for $i\geqslant3$. 
		\item[ii)] If $J$ satisfies \ding{175}, then 
        $(\mathcal{H}_p)_{\mu}=0$ for $p\geqslant1$ and $H_B^0(\Sc_I)_{\mu}=\ker(\delta^2_2)_{\mu}$.
	\end{itemize}	
\end{lem}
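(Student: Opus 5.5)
We use the two spectral sequences attached to the double complex $\mathcal{C}^{\bullet}_B(\Zc_{\bullet})$, the Čech complex of the approximation complex. This mirrors the argument for Lemma \ref{lem:HJ}, with $K_\bullet$ replaced by $\Zc_\bullet$. The first step is to compute, in degree $\mu\notin\mathscr{A}$, the column page $''E_1^{p,q}=H^q_B(\Zc_p)_\mu=H^q_B(Z_p)_{\mu+p\gamma}\otimes S(-p)$. By Lemma \ref{lem:HZ} i), the rows $q=0,1$ vanish. For $q\geqslant3$ we use Lemma \ref{lem:HZ} iii): we need $H^{q+j-1}_B(K_{p+j})_{\mu+p\gamma}=0$ for $j\geqslant1$. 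Since $K_{p+j}$ is a sum of copies of $J(-(p+j)\gamma)$, this reduces to $H^{q+j-1}_B(J)_{\mu-j\gamma}=0$. By Lemma \ref{lem:HJ} iv)–v), this follows from $\mu\notin\mathscr{A}$ together with the condition $H^{i}_B(R)_{\mu-j'\gamma}=0$ for the relevant shifts. The second union in the description of $\mathscr{A}$ in Lemma \ref{lem:HJ} v) is designed exactly to cover the shifts $j\gamma$ with $0\leqslant j\leqslant p-2$. Hence only the row $q=2$ survives, and the $E_2$ page in degree $\mu$ is the homology of
$$\cdots\to H^2_B(\Zc_2)_\mu\xrightarrow{\delta^2_2}H^2_B(\Zc_1)_\mu\xrightarrow{\delta^2_1}H^2_B(\Zc_0)_\mu\to0.$$
Since the page has a single nonzero row, the spectral sequence degenerates at $E_2$.

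For part i), note that the row spectral sequence has $'E_2^{p,q}=H^q_B(\mathcal{H}_p)$ and also converges to the hypercohomology. A priori it may have several nonzero columns, so we use the standard comparison via the truncation $\Zc_{\bullet}\to\mathcal{H}_0=\Sc_I$. Let $Z'_\bullet$ be the subcomplex giving the exact sequence $0\to \tau_{\geqslant1}\to\Zc_\bullet\to\Sc_I\to0$. Then $H^i_B(\Sc_I)_\mu$ is read off from total degree $i$ of the column page, corrected by the contribution of $\mathcal{H}_{p}$ for $p\geqslant1$. Concretely, the total-degree-$i$ term of the column page is $\bigoplus_p {}''E^{-p,i+p}$. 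With only row $q=2$ present, total degree $2$ gives $\coker\delta^2_1$, total degree $1$ gives $\ker\delta^2_1/\im\delta^2_2$, and total degree $\geqslant3$ is zero. To identify these with $H^i_B(\Sc_I)_\mu$ for $i\geqslant1$, we use that the higher homologies $\mathcal{H}_p$, $p\geqslant1$, are supported on $V(I)$. They are graded pieces built from $H_p$, whose support lies in $V(I)$ as in the proof of Lemma \ref{lem:HH}. Hence $H^q_B(\mathcal{H}_p)=0$ for $q\geqslant2$, and $H^1_B(\mathcal{H}_p)$ also vanishes because $\mathcal{H}_p$ is $B$-torsion on a zero-dimensional support modulo $H^0_B$. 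So the row page has nonzero terms only in row $q=0$ for $p\geqslant1$, and these land in total degree $\leqslant0$. Therefore the total degrees $\geqslant1$ see only $H^{i}_B(\mathcal{H}_0)=H^i_B(\Sc_I)$, which gives i).

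For part ii), assume \ding{175}, so that $\Zc_\bullet$ is acyclic after localizing at each base point, and is acyclic away from $V(I)$ because $I$ is a unit there. Hence each $\mathcal{H}_p$ with $p\geqslant1$ is $B$-torsion, and $'E_2^{-p,0}=H^0_B(\mathcal{H}_p)=\mathcal{H}_p$ is the only nonzero entry in column $-p$. Total degree $-p$ of the column page is ${}''E^{-(p+2),2}=\ker\delta^2_{p+2}/\im\delta^2_{p+3}$. In degree $\mu$ we show this vanishes for $p\geqslant1$ using Lemma \ref{lem:HZ} iv): $H^2_B(Z_{p+2})_{\mu+(p+2)\gamma}\cong H^0_B(H_{p+1})$, which is zero for $p+1\geqslant3$ by Lemma \ref{lem:HH} ii). The low cases $p=1,2$ need $\mu\notin\mathscr{A}$ to control $H^2_B(K)$ and the identification with $H^0_B(H_1)$, $H^0_B(H_2)$. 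Hence $(\mathcal{H}_p)_\mu=0$. Total degree $0$ then yields $H^0_B(\Sc_I)_\mu={}''E^{-2,2}=\ker(\delta^2_2)_\mu$, once we check that $\im\delta^2_3=0$ in degree $\mu$ by the same vanishing.

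The main obstacle is the bookkeeping of shifts. We must verify that $\mu\notin\mathscr{A}$ gives exactly the vanishing $H^{q+j-1}_B(J)_{\mu-j\gamma}=0$ needed in each relevant bidegree, especially for $q=2$ and small $p$, where Lemma \ref{lem:HJ} ii) involves $R/J$ rather than $R$.
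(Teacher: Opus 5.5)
\paragraph{Part i).} Your part i) is sound and follows the paper. Rows $q\geqslant3$ of the column page vanish in degree $\mu$, and in total degrees $\geqslant1$ the row page sees only column $0$. Two small corrections:
\begin{itemize}
\item The right citation for the row vanishing is Lemma \ref{lem:HJ} iii), which needs only $H^{q+j-1}_B(R)_{\mu-j\gamma}=0$ with $j\leqslant (q+j-1)-2$. Using iv) would demand shifts up to $p-1$, i.e.\ $\Rf$ rather than $\mathscr{A}$.
\item Without \ding{175} you cannot claim $H^1_B(\mathcal{H}_p)=0$. Support on the finite set $V(I)$ does not make a module $B$-torsion; $R/J$ is an example. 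This is harmless, since $H^1_B(\mathcal{H}_p)$ sits in total degree $1-p\leqslant0$.
\end{itemize}

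\paragraph{The gap in part ii).} By your own bookkeeping, $(\mathcal{H}_1)_\mu=0$ and $H^0_B(\Sc_I)_\mu=\ker(\delta^2_2)_\mu$ both require control of $H^2_B(\Zc_m)_\mu$ for $m\geqslant3$; in particular you need $\im(\delta^2_3)_\mu=0$. The natural statement to prove, and the one the paper proves, is $H^2_B(\Zc_m)_\mu=0$ for all $m\geqslant3$. Your argument does not give this for $m=3$.
\begin{itemize}
\item Lemma \ref{lem:HZ} iv) with coefficients in $J$ needs $H^2_B(J)_\mu=0$. This is not implied by $\mu\notin\mathscr{A}$; it is an extra hypothesis in Theorem \ref{thm:MQM}. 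Also, Lemma \ref{lem:HH} ii) kills $H_{m-1}(\mathbf f;R)$, not $H_{m-1}(\mathbf f;J)$.
\item Passing to $R$-coefficients via $Z_m(\mathbf f;J)=Z_m(\mathbf f;R)$ works for $m\geqslant4$. For $m=3$, however, it only reduces the problem to $H^0_B(H_2(\mathbf f;R))_{\mu+3\gamma}$, which you leave open.
\end{itemize}

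This term is not controlled by the part of $\mathscr{A}$ you use. For $n=2$ one has $Z_3(\mathbf f;R)\cong K_4(\mathbf f;R)\cong R(-4\gamma)$, so $H^2_B(\Zc_3)_\mu\cong H^2_B(R)_{\mu-\gamma}\otimes_k S(-3)$. On $\PP^1\times\PP^1$ with $\gamma=(3,3)$, the degree $\mu=(1,3)$ avoids the second union, yet $\mu-\gamma=(-2,0)\in\Supp_G H^2_B(R)$.

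\paragraph{The missing ingredient.} What is missing is the first union $\bigcup_{2\leqslant p\leqslant n}(\Supp_G H^p_B(R)+(p-1)\gamma)$, which your proposal never invokes. The paper's argument runs as follows for $m\geqslant3$.
\begin{itemize}
\item Lemma \ref{lem:HZ} iii) with $J=R$ gives $H^3_B(Z_{m+1}(\mathbf f;R))_{\mu+m\gamma}=0$. The required vanishing $H^{2+j}_B(R)_{\mu-(j+1)\gamma}=0$ is exactly the first union for $1\leqslant j\leqslant n-2$. It is automatic for $j\geqslant n-1$, since then $K_{m+1+j}=0$.
\item The sequences for $B_m=K_{m+1}/Z_{m+1}$ and $H_m=Z_m/B_m$ (with $H^2_B(H_m)=0$) then give surjections
\[
H^2_B(K_{m+1}(\mathbf f;R))_{\mu+m\gamma}\twoheadrightarrow H^2_B(B_m)_{\mu+m\gamma}\twoheadrightarrow H^2_B(Z_m)_{\mu+m\gamma}.
\]
\item The source is a sum of copies of $H^2_B(R)_{\mu-\gamma}$, which vanishes by the $p=2$ term of the first union.
\end{itemize}
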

\begin{proof}
From Lemma \ref{lem:HJ} v), we can see $H_B^i(R)_{\mu-j\gamma}=0$ for $\mu\notin\mathscr{A}$ and $i\geqslant2,j\leqslant i-2$.
Using Lemma \ref{lem:HJ} iii) and Lemma \ref{lem:HZ} iii), this yields $H_B^i(K_p)_{\mu+p\cdot \gamma}=0$ for $i\geqslant3$ and any $p$. Thus, we deduce $H_B^i(Z_p)_{\mu+p\cdot \gamma}=0$ for $i\geqslant3$ and any $p$, which implies $H_B^i(\Zc_p)_{\mu}=0$. 

Next, we analyze $H_B^i(\mathcal{Z}_p)$ for $i=2$ and $p\geqslant3$. We have $Z_i(\mathbf{f};J)=B_i(\mathbf{f};R)=Z_i(\mathbf{f};R)$ from Lemma \ref{lem:HH} ii). If $j\geqslant n-1$, since $p+j+1>n+2$, we obviously have $H_B^{2+j}(K_{p+j+1}(I;R))_{\mu+p\gamma}=0$. Meanwhile, for $j\leqslant n-2$, the definition of $\mathscr{A}$ implies $H_B^{2+j}(K_{p+j+1}(I;R))_{\mu+p\gamma}=0$. Applying Lemma \ref{lem:HZ} iii), we deduce
$$H_B^3(Z_{p+1}(\textbf{f};R))_{\mu+p\gamma}=0.$$
From the long exact sequences of $B_p=K_{p+1}/Z_{p+1}$ and $H_p=Z_p/B_p$, we have the following two surjections:
$$H_B^2(K_{p+1}(\mathbf{f};R))_{\mu+p\gamma}\twoheadrightarrow H_B^2(B_p(\mathbf{f};R))_{\mu+p\gamma}\twoheadrightarrow H_B^2(Z_p(\mathbf{f};R))_{\mu+p\gamma}.$$
Moreover, we have $H_B^2(K_{p+1}(\mathbf{f};R))_{\mu+p\gamma}=0$ from $\mu\notin\mathscr{A}$, which implies $H_B^2(Z_p(\mathbf{f};R))_{\mu+p\gamma}=0$. Finally, we obtain $H_B^i(\mathcal{Z}_p)_{\mu}=0$ for $i=2$ and $p\geqslant3$.

Finally, we compute the spectral sequence for $\mathcal{C}_{B}^{\bullet}\mathcal{Z}_{\bullet}$. The second page of the row-filtered spectral sequence is:
$$\begin{array}{cccccc}
\vdots & \vdots &  & \vdots & \vdots \\
0 & 0 & \cdots & 0 & H^3_B(\mathcal{S}_I)  \\
0 & 0 & \cdots & 0 & H^2_B(\mathcal{S}_I)  \\
0 & H^1_{B}(\mathcal{H}_{n+1}) & \cdots & H^1_{B}(\mathcal{H}_1) & H^1_B(\mathcal{S}_I)  \\
0 & H^0_{B}(\mathcal{H}_{n+1}) & \cdots & H^0_{B}(\mathcal{H}_1) & H^0_B(\mathcal{S}_I)  \\
\end{array}$$
The first page of the column-filtered spectral sequence in degree $\mu$ is:
	\[
	\xymatrixrowsep{0.3em}
	\xymatrixcolsep{1.4em}
	\xymatrix{
	 0 & 0 & 0  & 0 \\
	   0 & H^2_B(\Zc_2)_{\mu} \ar[r]^{(\delta_2^2)_{\mu}} & H^2_B(\Zc_1)_{\mu} \ar[r]^{(\delta_1^2)_{\mu}} &  H^2_B(\Zc_0)_{\mu} \\
    0 & 0 & 0  & 0 \\
     0 & 0 & 0  & 0 \\
	}
	\]
 If $J$ satisfies condition  \ding{175}, then $\mathcal{H}_i$ is $B$-torsion for $i>0$. Consequently, $H^0_B(\mathcal{H}_p)=\mathcal{H}_p$ and $H^1_B(\mathcal{H}_p)=0$ for any $p>0$. Statements i) and ii) then follow from the spectral sequence.
\end{proof}

\begin{lem}\label{lem:Rees}
Let $\mu\notin \mathscr{A}$. The following properties hold:
 	\begin{itemize}
        \item[i)] $H_B^i(\Rc_I)_{\mu}=H_B^i(\Sc_I)_{\mu}$ for $i\geqslant2$.
        \item[ii)] $H_B^1(\Rc_I)_{\mu}=0$, if $H_B^1(\Sc_I)_{\mu}=0$.
        \item[iii)] If $H_B^0(H_0(\mathbf{f};R))_{\mu+\gamma}=0$, then  $H_B^i(\Rc_I(R))_{\mu}=0$ for $i\geqslant0$.

	\end{itemize}	
\end{lem}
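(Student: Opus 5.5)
The idea is to compare $\Sc_I$ and $\Rc_I$ through the kernel $\mathcal{K}=\ker(\alpha:\Sc_I\to\Rc_I)$ and the exact sequence
$$0\rightarrow \mathcal{K}\rightarrow \Sc_I\xrightarrow{\alpha}\Rc_I\rightarrow 0.$$
The first step is to show that $\mathcal{K}$ is $B$-torsion, i.e. supported on $V(I)$. Away from $V(I)$, after localizing at a prime $\pf\not\supset B$ with $I_\pf=R_\pf$ (or more generally $\pf\notin V(I)$), the ideal $I_\pf$ is the unit ideal. It is therefore of linear type, and $\Sc_I$ and $\Rc_I$ agree there. Hence $H^i_B(\mathcal{K})=0$ for $i\geqslant1$, and the long exact sequence of local cohomology gives $H^i_B(\Sc_I)\cong H^i_B(\Rc_I)$ for $i\geqslant2$. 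It also gives an exact sequence
$$0\to H^0_B(\mathcal{K})\to H^0_B(\Sc_I)\to H^0_B(\Rc_I)\to H^1_B(\mathcal{K})=0\to H^1_B(\Sc_I)\to H^1_B(\Rc_I)\to H^2_B(\mathcal{K})=0.$$
This yields i) and ii) in every degree, in particular for $\mu\notin\mathscr{A}$. The one point to check is that the torsion claim holds with respect to $J$. Since $I\subset J$, at points outside $V(I)$ both modules become $J_\pf[t]$ up to identification, so this is fine.

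For iii), take $J=R$. Then $H^i_B(\Rc_I)_\mu=H^i_B(\Sc_I)_\mu$ for $i\geqslant2$, and these are described by Lemma \ref{lem:A} i): they vanish for $i\geqslant3$, and $H^2_B(\Sc_I)_\mu=\coker(\delta^2_1)_\mu$. Now $H^2_B(\Zc_0)_\mu=H^2_B(R)_\mu\otimes S$ and $H^2_B(\Zc_1)_\mu=H^2_B(Z_1)_{\mu+\gamma}\otimes S(-1)$. By Lemma \ref{lem:HZ} iv), applicable because $H^2_B(K_1)_{\mu+\gamma}=H^2_B(R)^{n+2}_{\mu}=0$ for $\mu\notin\mathscr{A}$, we get $H^2_B(Z_1)_{\mu+\gamma}=H^0_B(H_0)_{\mu+\gamma}=0$ by hypothesis. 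The same condition $H^2_B(R)_\mu=0$ (the $p=2$, $j=0$ part of $\mathscr{A}$ in Lemma \ref{lem:HJ} v)) kills $H^2_B(\Zc_0)_\mu$. Hence $H^2_B(\Sc_I)_\mu=0$ and $H^1_B(\Sc_I)_\mu=\ker(\delta^2_1)_\mu/\im(\delta^2_2)_\mu=0$, so ii) gives $H^1_B(\Rc_I)_\mu=0$. Finally, $H^0_B(\Rc_I(R))=0$ always, since $\Rc_I(R)\subset R[t]$ is torsion-free over the domain $R$ and $B\neq0$.

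I expect the main obstacle to be the torsion claim for $\mathcal{K}$ in the graded toric setting. There one must argue via the correspondence between $B$-torsion and vanishing of the associated sheaf on $\Xr\setminus V(I)$, using \eqref{eq:H01} and the fact that on the open set $U$ the map $\phi$ is a morphism, so the sheafified $\alpha$ is an isomorphism there. A secondary check is the degree bookkeeping $\Zc_1=Z_1(\gamma)\otimes S(-1)$, to make sure the shift lands on $\mu+\gamma$ as in the hypothesis.
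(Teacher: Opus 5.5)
\textbf{Gap in i) and ii): $\mathcal K$ is not $B$-torsion.} Your architecture matches the paper: the sequence $0\to\mathcal K\to\Sc_I\to\Rc_I\to0$ with $\mathcal K=Q(I;J)/\langle Q_1(I;J)\rangle$, then the long exact sequence in $H^\bullet_B$. Part iii) is also the paper's argument and is fine, including your remark that $H^0_B(\Rc_I(R))=0$ by torsion-freeness.

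The trouble is the claim that $\mathcal K$ is $B$-torsion. That is false in general, and ``$B$-torsion'' is not the same as ``supported on $V(I)$''.

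\begin{itemize}
\item Being $B$-torsion means $\widetilde{\mathcal K}=0$ on all of $\Xr$.
\item Your localization argument only shows $\widetilde{\mathcal K}$ vanishes on $\Xr\setminus V(I)$. So it is supported on the finitely many base points, namely those where $I$ is not of linear type with respect to $J$.
\item At a non-l.c.i.\ base point with $J=R$ (exactly the situation of iii)), $\mathcal K_\pf\neq0$.
\item If $\mathcal K$ were always $B$-torsion, then $\Sc_I/H^0_B(\Sc_I)=\Rc_I$ would hold with no hypothesis \ding{176} at all.
\end{itemize}

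What zero-dimensional support gives, via \eqref{eq:H2}, is only $H^i_B(\mathcal K)=\bigoplus_\mu H^{i-1}(\Xr,\widetilde{\mathcal K(\mu)})=0$ for $i\geqslant2$. The module $H^1_B(\mathcal K)$ is the cokernel in \eqref{eq:H01} and is typically nonzero. Compare $H^1_B(R/\pf)$ for the ideal of a point, or the Artinian modules $H^1_B(R/J_i)$ in the proof of Lemma \ref{lem:H2Z2}. So your isomorphism $H^1_B(\Sc_I)\cong H^1_B(\Rc_I)$ is unjustified and false in general. For the same reason, your ``main obstacle'' paragraph is aimed at the wrong target: it should prove that $\widetilde{\mathcal K}$ is supported on $V(I)$, not that $\mathcal K$ is $B$-torsion.

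\textbf{The repair is the paper's proof.} With the correct range:
\begin{itemize}
\item $H^i_B(\mathcal K)=H^{i+1}_B(\mathcal K)=0$ for $i\geqslant2$ gives i).
\item $H^2_B(\mathcal K)=0$ gives only a surjection $H^1_B(\Sc_I)\twoheadrightarrow H^1_B(\Rc_I)$. That is all ii) needs, and it is why ii) is stated as a one-way implication rather than an equality.
\end{itemize}
With this change, iii) goes through exactly as you wrote it.
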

\begin{proof}
Consider the following exact sequence:
\begin{equation}\label{eq:RISI}
0\longrightarrow  Q(I;J)/Q_1(I;J)\longrightarrow \Sc_I(J) \longrightarrow \Rc_I(J)\longrightarrow 0.
\end{equation}	
We have
 \begin{align*}
\Supp_{\Xr} (Q(I;J)/Q_1(I;J))& =\Supp_{\Xr} (J\cap Q(I;R))/(J\cap Q_1(I;R))\\
& \subset \Supp_{\Xr}(Q(I;R)/Q_1(I;R)).
\end{align*}
Thus, $Q(I;J)/Q_1(I,J)$ is supported on non-l.c.i.\ base points. Hence, $H_B^i(Q(I;J)/Q_1(I;J))=0$ for $i\geqslant2$. Then the long exact sequence of \eqref{eq:RISI} yields i), ii). 

 Now, we prove iii). We have $H_B^2(\Zc_0(\mathbf{f};R))_{\mu}=H_B^2(R)_{\mu}\otimes S=0$ by the definition of $\mathscr{A}$. Moreover, $H_B^2(\Zc_1(\mathbf{f};R))_{\mu}=H_B^2(Z_1(\mathbf{f};R))_{\mu+\gamma}\otimes S(-1)=0$ follows from Lemma \ref{lem:HZ} iv). This, together with the spectral sequence in Lemma \ref{lem:A}, implies $H_B^i(\Sc_I(R))_{\mu}=0$ for $i\geqslant1$. The conclusion then follows from i) and ii).
\end{proof}

The following lemma is needed to obtain information about the coefficient module $J$ from information about $R$. Let $\qf$ be a homogeneous prime ideal of $S$ with $\qf\supsetneq B$ and $\kappa(\qf)=S_{\qf}/\qf S_{\qf}$ the residue field of $\qf$. Recall that $U=\Xr\backslash V(I)$ and $\phi|_U:U\rightarrow \PP^{n+1}$ is the morphism restricted to $U$. 

\begin{lem}\label{lem:RIJq}
Let $\mu\in G$ with $H_B^1(\Rc_I(J))_{\mu}=0$. If $\qf$ satisfies one of the following two conditions
 	\begin{itemize}
		\item[i)] $\qf=(H)$, where $H$ is the implicit equation, 
		\item[ii)] $\qf$ is a closed point in the set-theoretic image of $\phi|_U$ with $\dim\phi|_U^{-1}(\qf)=0$ and $\pi_2^{-1}(\qf)\cap \pi_1^{-1}(V(I))=\emptyset$,
	\end{itemize}
    then we have $$(\Rc_I(J)\otimes_S\kappa(\qf))_{\mu}=(\Rc_I(R)\otimes_S\kappa(\qf))_{\mu}.$$
\end{lem}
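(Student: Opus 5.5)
The plan is to compare the two Rees modules through the cokernel of the inclusion $\Rc_I(J)\subset \Rc_I(R)$, and to show that this cokernel, in degree $\mu$, vanishes after localizing at $\qf$. Consider the short exact sequence of $G\oplus\ZZ$-graded $T$-modules
$$0\longrightarrow \Rc_I(J)\longrightarrow \Rc_I(R)\longrightarrow C\longrightarrow 0,\qquad C=\bigoplus_{k\geqslant0} I^k/JI^k\, t^k .$$
Since $C_\mu$ is an $S$-module, once we know $(C_\mu)_{\qf}=0$ we get $(\Rc_I(J)_\mu)_{\qf}=(\Rc_I(R)_\mu)_{\qf}$. Tensoring with $\kappa(\qf)$ then gives the claimed equality.

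First, I will locate the support of $C$. It is annihilated by $J$, and $J\supset I$ by condition \ding{173}. Hence, as an $R$-module, $C$ is supported on $V(J)\subset V(I)$. Viewed on $\Proj_{\Xr}\Rc_I(R)\subset \Xr\times\PP^{n+1}$, the sheaf $\widetilde{C}$ is therefore supported on $\pi_1^{-1}(V(I))$.

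Next, I will use the hypothesis $H_B^1(\Rc_I(J))_\mu=0$. Since $\Rc_I(R)\subset R[t]$ is torsion free over the domain $R$, we have $H^0_B(\Rc_I(R))=0$. The long exact sequence of local cohomology then gives
$$0=H_B^0(\Rc_I(R))_\mu\to H_B^0(C)_\mu\to H_B^1(\Rc_I(J))_\mu=0 ,$$
so $H^0_B(C)_\mu=0$. By \eqref{eq:H01}, the $S$-module $C_\mu$ therefore embeds into the module of global sections of $\widetilde{C(\mu)}$. These are sections of a coherent sheaf on $\Xr\times\PP^{n+1}$ supported on $\pi_1^{-1}(V(I))$. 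Pushing forward along the projective morphism $\pi_2$, the $S$-support of $C_\mu$ is contained in the cone over the closed set $Z=\pi_2(\pi_1^{-1}(V(I)))\subset\PP^{n+1}$.

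It remains to check that $\qf\notin Z$ in both cases. In case ii), this is exactly the hypothesis $\pi_2^{-1}(\qf)\cap\pi_1^{-1}(V(I))=\emptyset$. In case i), note that $V(I)$ is finite and $\Proj_{\Xr}\Rc_I(R)$ is irreducible of dimension $n$. So $\pi_1^{-1}(V(I))$ is a proper closed subset of dimension at most $n-1$, and its image $Z$ has dimension at most $n-1$. Hence $Z$ cannot contain the generic point of the hypersurface $\Sr$, so $(H)\notin Z$.

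I expect the main obstacle to be the step converting $H^0_B(C)_\mu=0$ into a statement about the $S$-support of $C_\mu$. One has to be careful that the sections of $\widetilde{C(\mu)}$ form a module over $S$ whose support is controlled by $\pi_2(\Supp\widetilde{C})$. I would handle this by working on the affine toric charts $U_\sigma$ and localizing at $\qf$ there; this uses the finiteness of $V(I)$, so that only finitely many local modules $(I^k/JI^k)_{\pf}$ appear, each supported on the fiber over $\pf$.
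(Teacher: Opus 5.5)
Your proposal is correct, and its skeleton matches the paper's: the same sequence $0\to\Rc_I(J)\to\Rc_I(R)\to C\to 0$ with $C=R/J\otimes_R\Rc_I(R)$, the same deduction $H^0_B(C)_\mu=0$, and the same geometric input that $\pi_2^{-1}(\qf)$ misses $\pi_1^{-1}(V(I))$. Your dimension count in case i) is a cleaner form of the paper's generic-fibre remark.

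The difference is in how you carry the vanishing $H^0_B(C)_\mu=0$ over to $\qf$. The paper base-changes first. $\Proj_{\Xr}(C\otimes_S\kappa(\qf))$ lies over $\pi_1^{-1}(V(J))\cap\pi_2^{-1}(\qf)=\emptyset$, so $C\otimes_S\kappa(\qf)$ is $B$-torsion and its degree-$\mu$ part is $H^0_B(C\otimes_S\kappa(\qf))_\mu$. In case i) this equals $H^0_B(C_\qf)_\mu=0$, because $H$ annihilates $\Rc_I(R)$ and hence $C\otimes_S\kappa(\qf)=C_\qf$. Case ii) is delegated to \cite[Proposition 6.3]{Cha13}.

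You instead use \eqref{eq:H01} to embed $C_\mu$ into a module of sections $N$ and then bound the $S$-support of $N$. The step you flagged does go through. Each $\widetilde{I^k/JI^k}$ is a skyscraper on the finite set $V(J)$, where $\Xr$ is smooth. After trivializing $\mathcal{O}_{\Xr}(\gamma)$ near each point, $N\cong\bigoplus_{\pf\in V(J)}\bigoplus_k I_\pf^k/J_\pf I_\pf^k$, with $x_i$ acting as $f_i t$. Each summand is a quotient of $S\otimes_k\mathcal{O}_{\Xr,\pf}/J_\pf$, hence a finite graded $S$-module supported on the cone over $\pi_2(\pi_1^{-1}(\pf))$.

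Your route buys two things. It treats i) and ii) uniformly without an external base-change theorem. It also gives the stronger conclusion $(C_\mu)_\qf=0$, so the natural map $(\Rc_I(J)\otimes_S\kappa(\qf))_\mu\to(\Rc_I(R)\otimes_S\kappa(\qf))_\mu$ is an isomorphism outright, which is what the dimension counts in Theorems \ref{thm:MQM} and \ref{thm:corank} use. In case ii), the paper's conclusion $(C\otimes_S\kappa(\qf))_\mu=0$ gives only surjectivity, and it needs Nakayama on the finite $S_\qf$-module $(C_\mu)_\qf$ to reach the same point. In exchange, the paper's route is shorter and never has to describe the sections of $\widetilde{C}$ explicitly.
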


\begin{proof}
Let $M=R/J\otimes_R \Rc_I(R)\otimes_S \kappa(\qf)$. $\Proj_{\Xr}(M)$ is a closed subscheme of $\Xr\times \PP^{n+1}$ with 
$$\pi_1(\Proj_{\Xr}(M))\subset V(J)\subset V(I)\ \mathrm{and}\ \pi_2(\Proj_{\Xr}(M))\subset V(\qf).$$

For i), Since $\phi|_U$ is generically finite and dominant onto $V(H)$, its generic fiber is finite and contained in $U$. Consequently, the generic fiber of $\Proj_{\Xr}(\Rc_I(R))$ does not intersect the exceptional divisor, i.e., 
\begin{equation}\label{eq:empty}
\pi_2^{-1}(\qf) \cap \pi_1^{-1}(V(I)) = \emptyset.
\end{equation}	
This implies 
$$\Proj_{\Xr}(M)=\emptyset.$$
Since $\Xr$ is smooth at $V(I)$, we deduce $R/J\otimes_R\Rc_I(R)\otimes_S\kappa(\qf)$ is $B$-torsion from \cite[Proposition 5.3.10]{CLS11}. Moreover, from the long exact sequence induced by $0\rightarrow \Rc_I(J)\rightarrow \Rc_I(R) \rightarrow R/J\otimes_R \Rc_I(R)\rightarrow0$ and the assumption on $\mu$, we get $$H_B^0(R/J\otimes_R\Rc_I(R))_{\mu}=0.$$
Since $\qf$ annihilates $R/J\otimes_R\Rc_I(R)$, we have 
\begin{align*}
(R/J\otimes_R\Rc_I(R)\otimes_S\kappa(\qf))_{\mu} &= H_B^0(R/J\otimes_R\Rc_I(R)\otimes_S\kappa(\qf))_{\mu} \\
&= H_B^0(R/J\otimes_R\Rc_I(R)_{\qf})_{\mu} \\
& =0,
\end{align*}
where the last equality follows from $H_B^0(R/J\otimes_R\Rc_I(R))_{\mu}=0$. This proves i).

For ii), we have $\Proj_{\Xr}(M)=\pi_1^{-1}(V(J))\cap \pi_2^{-1}(\qf)$ as sets, which is empty by assumption. Meanwhile, we obtain $H_B^0(R/J\otimes_R\Rc_I(R)\otimes_S\kappa(\qf))_{\mu}=0$ from \cite[Proposition 6.3 (ii)]{Cha13}, which also indicates $(R/J\otimes_R\Rc_I(R)\otimes_S\kappa(\qf))_{\mu}=0$.
\end{proof}

We are now ready to prove the main theorem about quadratic matrix representations.
\begin{thm}\label{thm:MQM}
Let $J$ be a homogeneous ideal satisfying the conditions \ding{173},\ding{174},\ding{175}. Let $\mu\notin \mathscr{A}$ with $H_B^2(J)_{\mu}=H_B^0(H_0)_{\mu+\gamma}=H_B^0(H_0(\mathbf{f};R))_{\mu+\gamma}=0$ and $J_{\mu}\neq 0$. Then the complex
\begin{equation}\label{eq:MQ}
	  \cdots \rightarrow (\Zc_3)_{\mu}\rightarrow (\Zc_2)_{\mu} \rightarrow (\Zc_1)_{\mu}\oplus (Q_2/Q_1)_{\mu}\xrightarrow{\MM_{\mu}} (\Zc_0)_{\mu}
\end{equation}	
is a minimal graded free resolution of the $S$-module $(\Sc_I/H_B^0(\Sc_I))_{\mu}$. Its determinant equals $H^{\deg(\phi)}F$ for some $F\in S$. If $J$ satisfies \ding{176}, then $F\in k^{\times}$.
\end{thm}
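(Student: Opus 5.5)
The plan is to build the resolution from the approximation complex $\Zc_\bullet$ and the spectral sequence of Lemma \ref{lem:A}, and then to compute its determinant by localizing at the height-one prime $(H)$, using Lemma \ref{lem:RIJq}.

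\textbf{Exactness.} By Lemma \ref{lem:A} ii), condition \ding{175} and $\mu\notin\mathscr{A}$ give $(\mathcal{H}_p)_\mu=0$ for $p\geqslant 1$. Hence $\cdots\to(\Zc_2)_\mu\to(\Zc_1)_\mu\to(\Zc_0)_\mu$ is a resolution of $(\Sc_I)_\mu$; it is free over $S$ because each $(\Zc_p)_\mu=(Z_p)_{\mu+p\gamma}\otimes_k S(-p)$ and $(Z_p)_{\mu+p\gamma}$ is a finite-dimensional $k$-space. Next I show $H^0_B(\Sc_I)_\mu$ is spanned by the images of quadratic syzygies $(Q_2/Q_1)_\mu$. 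By Lemma \ref{lem:A} ii), $H_B^0(\Sc_I)_\mu=\ker(\delta^2_2)_\mu$, and the spectral sequence identifies this with a subquotient of $H^2_B(\Zc_2)_\mu=H^2_B(Z_2)_{\mu+2\gamma}\otimes S(-2)$. Lemma \ref{lem:HZ} iv) should give $H_B^2(Z_2)_{\mu+2\gamma}\cong H^0_B(H_1)_{\mu+2\gamma}$, provided $H_B^2(K_2)_{\mu+2\gamma}=0$; this vanishing would follow from $H_B^2(J)_\mu=0$ together with $\mu\notin\mathscr{A}$ via Lemma \ref{lem:HJ}. As a module over $S$, this group is concentrated in $x$-degree $2$, so $H^0_B(\Sc_I)_\mu$ is generated in degree $2$. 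Moreover $H^0_B(\Sc_I)\subset Q/Q_1$, so these generators are exactly quadratic syzygies. Adding $(Q_2/Q_1)_\mu$ to the degree-one term therefore yields a presentation of $(\Sc_I/H^0_B(\Sc_I))_\mu$.

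The hardest point is exactness at the new spot, i.e.\ that the kernel of $(\Zc_1)_\mu\oplus(Q_2/Q_1)_\mu\to(\Zc_0)_\mu$ is exactly the image of $(\Zc_2)_\mu$. I would argue this with a mapping-cone construction. Because $H^0_B(\Sc_I)_\mu$ is a free $S$-module concentrated as $H^0_B(H_1)_{\mu+2\gamma}\otimes S(-2)$, the map $(Q_2/Q_1)_\mu\to H^0_B(\Sc_I)_\mu$ should be an isomorphism. Then the long exact sequence of $0\to H^0_B(\Sc_I)\to\Sc_I\to\Sc_I/H^0_B\to0$, combined with the resolution of $(\Sc_I)_\mu$, gives the claimed resolution. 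Minimality holds because the differentials of $\Zc_\bullet$ are linear and the quadratic columns are minimal generators modulo the linear ones.

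\textbf{Determinant.} Since $H_B^0(H_0(\mathbf f;R))_{\mu+\gamma}=0$, Lemma \ref{lem:Rees} iii) gives $H^i_B(\Rc_I(R))_\mu=0$. Together with Lemma \ref{lem:Rees} ii) and the short exact sequence \eqref{eq:RISI}, this should give $H^1_B(\Rc_I(J))_\mu=0$. Then Lemma \ref{lem:RIJq} i) shows $(\Rc_I(J))_\mu\otimes\kappa((H))\cong(\Rc_I(R))_\mu\otimes\kappa((H))$. The determinant of a torsion $S$-module $M$ is $\prod_{\qf}\qf^{\ell(M_\qf)}$ over height-one primes $\qf$. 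For $\Rc_I(R)_\mu$ the local length at $(H)$ equals $\deg(\phi)$, as in \cite{Bot11}, using $J_\mu\neq0$ and the vanishing of local cohomology so that $\Rc_I(R)_\mu$ computes global sections on the generic fiber. Transporting this to $J$ through Lemma \ref{lem:RIJq}, the $H$-part of $\det$ is $H^{\deg\phi}$, and the remaining factor $F$ comes from the other primes in the support of $\Sc_I/H^0_B(\Sc_I)$.

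\textbf{Triviality of $F$.} Finally, if \ding{176} holds, then $\Sc_I/H_B^0(\Sc_I)=\Rc_I(J)$, since the kernel of $\Sc_I\to\Rc_I$ is supported on $V(I)$ and vanishes locally there. Then $\det(\Rc_I(J)_\mu)$ has support only on $V(H)$: it is annihilated by $H$ generically, and at closed points in the image the fibers are controlled by Lemma \ref{lem:RIJq} ii). This forces $F\in k^\times$.
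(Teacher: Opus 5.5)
There is a genuine gap. Your outline follows the paper's route: the spectral sequence of $\mathcal{C}^\bullet_B\Zc_\bullet$ for the resolution, then Lemmas \ref{lem:Rees}, \ref{lem:RIJq} and \cite{Bot11} for the determinant. But you never use the hypothesis $H^0_B(H_0)_{\mu+\gamma}=0$, with $H_0=H_0(\mathbf f;J)=J/IJ$, and both halves of the argument need it.

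In the exactness part you have $H^0_B(\Sc_I)_\mu=\ker(\delta^2_2)_\mu\subset H^2_B(\Zc_2)_\mu=H^2_B(Z_2)_{\mu+2\gamma}\otimes_kS(-2)$. From this you conclude that $H^0_B(\Sc_I)_\mu$ is generated in degree $2$, and later that it is free and equal to the whole of $H^2_B(Z_2)_{\mu+2\gamma}\otimes_kS(-2)$. That does not follow. The map $(\delta^2_2)_\mu$ is a linear map into the free module $H^2_B(Z_1)_{\mu+\gamma}\otimes_kS(-1)$, and its kernel need not be all of the source, nor generated in degree $2$. For example, the kernel of $(x_0\ x_1)\colon S(-2)^2\to S(-1)$ is generated by $(x_1,-x_0)$, which sits in degree $3$.

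The missing step is $H^2_B(\Zc_1)_\mu=0$. Since $H^2_B(K_1)_{\mu+\gamma}=H^2_B(J)_\mu^{\oplus(n+2)}=0$, Lemma \ref{lem:HZ} iv) with $p=1$ gives $H^2_B(Z_1)_{\mu+\gamma}\cong H^0_B(H_0)_{\mu+\gamma}=0$. Together with $H^2_B(\Zc_0)_\mu=H^2_B(J)_\mu\otimes_kS=0$, the column-filtered page in degree $\mu$ reduces to the single term $H^2_B(\Zc_2)_\mu$. Hence $H^0_B(\Sc_I)_\mu=H^2_B(\Zc_2)_\mu$ is free and generated in degree $2$, and $H^1_B(\Sc_I)_\mu=H^2_B(\Sc_I)_\mu=0$. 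Once this is in place, your mapping-cone argument is fine.

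The same omission breaks the determinant step. Lemma \ref{lem:Rees} ii) requires $H^1_B(\Sc_I(J))_\mu=0$, which you have not established. The vanishing of $H^\bullet_B(\Rc_I(R))_\mu$ from Lemma \ref{lem:Rees} iii) does not pass to $J$. From $0\to\Rc_I(J)\to\Rc_I(R)\to R/J\otimes_R\Rc_I(R)\to0$ you only get $H^1_B(\Rc_I(J))_\mu$ as a quotient of $H^0_B(R/J\otimes_R\Rc_I(R))_\mu$. In the proof of Lemma \ref{lem:RIJq}, the vanishing of the latter is deduced from $H^1_B(\Rc_I(J))_\mu=0$, not the other way round. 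By Lemma \ref{lem:A} i) and $H^2_B(\Zc_0)_\mu=0$, you have $H^1_B(\Sc_I)_\mu=H^2_B(\Zc_1)_\mu/\im(\delta^2_2)_\mu$. Since $(\Zc_2)_\mu$ has nothing in $x$-degree $1$, this vanishes exactly when $H^2_B(Z_1)_{\mu+\gamma}=0$, that is, exactly under the hypothesis you dropped.

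Two smaller points. First, without \ding{176} you must also check that $(\Sc_I/H^0_B(\Sc_I))_\mu$ and $\Rc_I(J)_\mu$ agree after localizing at $(H)$. The paper gets this from $Q(I;J)/Q_1(I;J)$ being supported over $V(I)$, together with \eqref{eq:empty}. Second, the length at $(H)$ equals the fibre dimension $\deg(\phi)$ because $H$ annihilates $\Rc_I(J)_\mu$; in fact $\ann_S(\Rc_I(J)_\mu)=(H)$, since $\Rc_I(R)$ is a domain and $J_\mu\neq0$. That same fact, rather than Lemma \ref{lem:RIJq} ii), is what gives $\Supp_S\Rc_I(J)_\mu=V(H)$ and hence $F\in k^\times$ under \ding{176}.
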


\begin{proof}
We deduce $H_B^2(\Zc_0)_{\mu}=H_B^2(J)_{\mu}\otimes_kS=0$ and $H_B^2(K_1)_{\mu+\gamma}=0$ from $H_B^2(J)_{\mu}=0$. Hence, we obtain $H_B^2(Z_1)_{\mu+\gamma}=H_B^0(H_0)_{\mu+\gamma}=0$ by Lemma \ref{lem:HZ} iv). This implies $H_B^1(\Zc_1)_{\mu}=0$. Then we deduce $H_B^0(\Sc_I)_{\mu}=H_B^2(\Zc_2)_{\mu}=H_B^2(Z_2)_{\mu+2\gamma}\otimes_kS(-2)$, which is a free $S$-module generated in degree $2$. Combining with Lemma \ref{lem:A}, this implies that \eqref{eq:MQ} is a minimal graded free resolution of $(\Sc_I/H_B^0(\Sc_I))_{\mu}$. 

We now turn to the determinant. Set $\qf=(H)$. Note that the module $Q(I;J)/Q_1(I,J)$ is supported on the base points. This, together with \eqref{eq:empty}, implies $(\Sc_I/H_B^0(\Sc_I))_{\qf}=(\Rc_I)_{\qf}$. Consequently, we deduce 
\begin{align*}
\dim_{\kappa(\qf)}(\Sc_I/H_B^0(\Sc_I)\otimes\kappa(\qf))_{\mu} &= \dim_{\kappa(\qf)}(\Rc_I(J)\otimes\kappa(\qf))_{\mu} \\
&= \dim_{\kappa(\qf)}(\Rc_I(R)\otimes\kappa(\qf))_{\mu} & \mathrm{Lemma}\  \ref{lem:Rees}\ \mathrm{ii}),\  \ref{lem:RIJq}\ \mathrm{i})   \\
& =\deg (\phi). & \mathrm{Lemma}\ \ref{lem:Rees}\ \mathrm{iii}),\ \mathrm{Equation\  \eqref{eq:H01}}  
\end{align*}
It is well-known that if $R_{\mu}\neq0$, then $\ann_S(\Rc_I(R)_{\mu})=(H)$. Since $\Rc_I(R)$ is an integral domain, we have $\ann_S(\Rc_I(J)_{\mu})=(H)$ whenever $J_{\mu}\neq0$. The computation of the determinant then follows the same lines as the proof of \cite[Theorem 5.5]{Bot11}.
\end{proof}

The rank of the implicitization matrix at a closed point and its relation to multiple points of the parameterization are studied in \cite{BBC14}. This property is used to compute the self-intersection loci of $\mathscr{S}$ in geometric modeling, c.f. \cite{Bus14, JCY22}.

\begin{thm}\label{thm:corank}
Let $\phi$ be generically injective. Under all assumptions of Theorem \ref{thm:MQM}, together with the condition \ding{172} for $J$, if $\qf$ is a closed point in the set-theoretic image of $\phi|_U$ satisfying the following conditions:
 	\begin{itemize}
		\item[i)] $\dim\phi|_U^{-1}(\qf)=0$,
		\item[ii)] $\pi_2^{-1}(\qf)\cap \pi_1^{-1}(V(I))=\emptyset$,
        \item[iii)] $\deg \phi|_U^{-1}(\qf)\leqslant HF(J,\mu)$,
	\end{itemize}
then $\mathrm{corank}(\MM_{\mu}(\qf))=\deg \phi|_U^{-1}(\qf)$.
\end{thm}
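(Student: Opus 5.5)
By Theorem~\ref{thm:MQM}, $\MM_{\mu}$ is a presentation matrix of the $S$-module $M:=(\Sc_I/H_B^0(\Sc_I))_{\mu}$. Since \ding{172} and \ding{173} imply \ding{176}, we have $M=\Rc_I(J)_{\mu}$. Specializing at a closed point $\qf$ is right exact, so the corank of $\MM_{\mu}(\qf)$ equals the number of rows minus its rank:
$$\mathrm{corank}(\MM_{\mu}(\qf))=\dim_{\kappa(\qf)}(\Rc_I(J)\otimes_S\kappa(\qf))_{\mu}.$$
The plan is to compute this fiber dimension in two steps. First, replace $J$ by $R$. Second, identify $(\Rc_I(R)\otimes_S\kappa(\qf))_{\mu}$ with a space of global sections on the fiber $\pi_2^{-1}(\qf)$. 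Conditions i) and ii) make that fiber a finite scheme contained in $U$.

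\textbf{Step 1.} Lemma~\ref{lem:RIJq} ii) applies, since its hypotheses are exactly i), ii), and $H_B^1(\Rc_I(J))_{\mu}=0$. The latter holds because $\mu\notin\mathscr{A}$: Lemma~\ref{lem:Rees} ii) reduces it to $H^1_B(\Sc_I)_{\mu}=0$. This in turn comes from Lemma~\ref{lem:A} i), using $H_B^2(\Zc_1)_{\mu}=0$ as established in the proof of Theorem~\ref{thm:MQM}. Hence
$$(\Rc_I(J)\otimes\kappa(\qf))_{\mu}=(\Rc_I(R)\otimes\kappa(\qf))_{\mu}.$$

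\textbf{Step 2.} Let $N=\Rc_I(R)\otimes_S\kappa(\qf)$, regarded as a $G$-graded $R$-module. By ii), $\Proj_{\Xr}N$ is the scheme-theoretic fiber $\pi_2^{-1}(\qf)$. This fiber lies over $U$, where $\pi_1$ is an isomorphism onto the graph, so it is isomorphic to the finite subscheme $Z=\phi|_U^{-1}(\qf)$ of length $\deg\phi|_U^{-1}(\qf)$. The sequence \eqref{eq:H01} gives
$$0\to H_B^0(N)_{\mu}\to N_{\mu}\to H^0(Z,\mathcal{O}_Z(\mu))\to H_B^1(N)_{\mu}\to 0.$$
Since $Z$ is zero-dimensional, $H^0(Z,\mathcal{O}_Z(\mu))$ has dimension $\deg Z$. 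It therefore suffices to show $H_B^0(N)_{\mu}=H_B^1(N)_{\mu}=0$.

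The vanishing of $H_B^0(N)_{\mu}$ is the content of \cite[Proposition 6.3]{Cha13}, already used in Lemma~\ref{lem:RIJq}. For $H_B^1(N)_{\mu}$, I would write $N$ as $\Rc_I(R)$ modulo a regular sequence of linear forms in $S$ cutting out $\qf$. These $n+1$ forms are regular on $\Rc_I(R)$ away from the fiber structure. Using $H_B^i(\Rc_I(R))_{\mu}=0$ for all $i$ (Lemma~\ref{lem:Rees} iii)), I would induct on the number of forms with the long exact sequences in local cohomology. The goal is to show $H^1_B$ of each successive quotient vanishes in degree $\mu$, where the relevant shifts are in the $S$-degree only, so $\mu$ is unchanged.

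\textbf{Main obstacle.} This last induction is the hardest step. The linear forms need not be a regular sequence globally on $\Rc_I(R)$, because fibers of $\pi_2$ elsewhere may be positive dimensional. I would first try to localize at $\qf$ in $S$: apply the argument to $\Rc_I(R)_{\qf}$, where by i) and ii) the fiber is finite and avoids the exceptional locus, so the Koszul complex on the forms has homology supported only on $B$-torsion.

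Condition iii), $\deg Z\le \HF(J,\mu)$, serves as a consistency check. The fiber space is a quotient of $J_{\mu}\otimes\kappa(\qf)$, whose dimension is at most the number of rows $\HF(J,\mu)$, so the corank $\deg Z$ can be attained. If the vanishing of $H^1_B(N)_{\mu}$ cannot be obtained directly, I would instead use iii) together with generic injectivity to argue that the evaluation map $J_{\mu}\to H^0(Z,\mathcal{O}_Z(\mu))$ is surjective. I would establish this surjectivity via the regularity bound implicit in $\mu\notin\mathscr{A}$.
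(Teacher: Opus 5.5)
Your reduction follows the paper's. The corank equals $\dim_{\kappa(\qf)}(\Rc_I(J)\otimes\kappa(\qf))_\mu$, Lemma \ref{lem:RIJq} ii) replaces $J$ by $R$, and \eqref{eq:H01} turns this into $\deg\pi_2^{-1}(\qf)=\deg\phi|_U^{-1}(\qf)$, provided $H_B^0(N)_\mu=H_B^1(N)_\mu=0$.

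\textbf{The gap.} You never establish $H_B^1(N)_\mu=0$, and neither route you sketch can work.

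The induction fails because linear forms $\ell_1,\dots,\ell_{n+1}$ cutting out $\qf$ are never a regular sequence on $\Rc_I(R)_{\qf}$, localized or not. Take a point $(p,\qf)$ of the nonempty finite fiber with $p\in U$. Near it the graph is isomorphic to $\Xr$ near $p$. There the fiber is cut out by $n+1$ elements of the $n$-dimensional local ring $\mathcal{O}_{\Xr,p}$, so the first Koszul homology is nonzero at that point. In particular, your claim that the Koszul homology becomes $B$-torsion after localizing is false: it is annihilated by $\qf$ and supported on $\pi_2^{-1}(\qf)$.

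The fallback through iii) is not an argument either. Condition iii) is a numerical inequality that follows from the conclusion, since the cokernel is a quotient of a space of dimension $\HF(J,\mu)$. It cannot give surjectivity of $J_\mu\to H^0(Z,\mathcal{O}_Z(\mu))$.

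\textbf{The missing ingredient.} The paper uses the result of \cite{Cha13} that you already cite for $H_B^0$ (the paper cites it as Theorem 6.3 ii)). From $H_B^i(\Rc_I(R))_\mu=0$ for all $i$ (Lemma \ref{lem:Rees} iii)) and the zero-dimensionality of the fiber, it gives $H_B^0(N)_\mu=0$ and $H_B^1(N)_\mu=0$ at once.

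To see this directly, replace your induction by the double complex $\mathcal{C}_B^{\bullet}(K_\bullet(\ell;\Rc_I(R)_{\qf}))$ in $G$-degree $\mu$.
\begin{itemize}
\item Taking \v{C}ech cohomology first gives terms $H_B^i(\Rc_I(R))_{\qf}\otimes\bigwedge^j S_{\qf}^{n+1}$. These vanish in $G$-degree $\mu$, because the Koszul complex only shifts $S$-degrees. Hence the total cohomology is zero in degree $\mu$.
\item Taking Koszul homology first gives $E_2^{i,-j}=H_B^i(H_j(\ell;\Rc_I(R)_{\qf}))$. By i) and ii), each $H_j$ is supported on the finite fiber, so these terms vanish for $i\geqslant 2$.
\item With only the rows $i=0,1$ nonzero, every differential $d_r$ with $r\geqslant 2$ vanishes. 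So $E_2=E_\infty$, which is zero in degree $\mu$. For $j=0$ this is exactly $H_B^0(N)_\mu=H_B^1(N)_\mu=0$.
\end{itemize}
No regularity of the $\ell_i$ is needed, only that their Koszul homology has zero-dimensional support in $\Proj$.
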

\begin{proof}
Denote by $\mathfrak{F}^i_{\mu}(\Sc_I/H_B^0(\Sc_I)\otimes\kappa(\qf))$ the \emph{$i$-th Fitting ideal} of $(\Sc_I/H_B^0(\Sc_I)_{\mu}\otimes\kappa(\qf))_{\mu}$, which is generated by all the minors of size $\HF(J,\mu)-i$ of the matrix $\MM_{\mu}(\qf)$. Then $\mathfrak{F}^i_{\mu}(\Sc_I/H_B^0(\Sc_I)\otimes\kappa(\qf))=0$ is equivalent to $\mathrm{corank}(\MM_{\mu}(\qf))\geqslant i+1$. Furthermore, we have $H_B^i(\Rc_I(R))_{\mu}=0$ for $i\geqslant0$ from Lemma \ref{lem:Rees} iii). Applying \cite[Theorem 6.3 ii)]{Cha13} yields $H_B^0(\Rc_I(R)\otimes\kappa(\qf))_{\mu}=H_B^1(\Rc_I(R)\otimes \kappa(\qf))_{\mu}=0$. Then, we obtain 
\begin{align*}
\dim_{\kappa(\qf)}(\Sc_I/H_B^0(\Sc_I)\otimes\kappa(\qf))_{\mu} &= \dim_{\kappa(\qf)}(\Rc_I(J)\otimes\kappa(\qf))_{\mu}  \\
&= \dim_{\kappa(\qf)}(\Rc_I(R)\otimes\kappa(\qf))_{\mu}  & \mathrm{Lemma}\  \ref{lem:Rees}\ \mathrm{ii}),\  \ref{lem:RIJq}\ \mathrm{ii})   \\
& =\deg \pi_2^{-1}(\qf)   & \ \mathrm{Equation\  \eqref{eq:H01}}          \\
& =\deg \phi|_U^{-1}(\qf).       &  \mathrm{Condition\  ii) }
\end{align*}
Finally, using iii), the conclusion follows from the equivalence of $\mathfrak{F}^i_{\mu}(\Sc_I/H_B^0(\Sc_I)\otimes\kappa(\qf))=0$ and $\dim_{\kappa(\qf)}(\Sc_I/H_B^0(\Sc_I)\otimes\kappa(\qf))_{\mu}\geqslant i+1$, which can be seen by a direct computation of Fitting ideals of vector spaces as in \cite[Proposition 7]{BBC14}.
\end{proof}

 \subsection{Linear matrix representations}
Then we study implicitization using only linear syzygies with respect to a coefficient ideal $J$, that is, implicitizing by the minimal presentation matrix of $(\Sc_I)_{\mu}$.

\medskip
We want $(\Sc_I)_{\mu}=(\Rc_I)_{\mu}$. This requires determining the degrees $\mu$ for which $H^0_{B}(\mathcal{S}_I)_{\mu}=0$. It will be shown that the answer is related to the complement of $\Rf$. Applying Lemma \ref{lem:HJ} iv) recursively for $J=R$ yields 
$$\mathfrak{R}=\bigcup_{p\geqslant2}\left(\bigcup_{0\leqslant j\leqslant p-1}(\Supp_GH_B^p(R)+j\gamma)\right).$$
This form is first introduced in \cite{Bot11}. Set $\cd=\sup\{i \mid H_B^i(R)\neq0 \}$. Compared to $\mathscr{A}$, we note that if $\cd>n$, then $H_{B}^{\mathrm{cd}}(R)_{\mu-(\mathrm{cd}-1)\gamma} \neq 0$ is not allowed for $\mu \notin \Rf$.

\medskip
Based on Lemma \ref{lem:A}, it suffices to consider $H_B^2(\Zc_p)_{\mu}$ for $0\leqslant p\leqslant2$. Given a degree $\mu\in G$. Let $J=\cap_{1\leqslant i\leqslant l} J_i$ be the minimal primary decomposition. We introduce the following condition \eqref{eq:sum}: 
\begin{equation}\label{eq:sum}
H_B^1(R/J)_{\mu-\gamma}=\oplus_i H_B^1(R/J_i)_{\mu-\gamma}.
\tag{$\ast$}
\end{equation}

\begin{lem}\label{lem:H2Z2}
For any $\mu \notin \Rf$ satisfying \eqref{eq:sum}, we have $H_B^2(\Zc_2)_{\mu}=H_B^2(\Zc_1)_{\mu}=H_B^2(\Zc_0)_{\mu}=0$.
\end{lem}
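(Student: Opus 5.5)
The plan is to reduce each of the three vanishings to statements about $H_B^2(J)$ and about $B$-torsion of Koszul homology, using Lemma \ref{lem:HJ} and Lemma \ref{lem:HZ} iv). Since $\Zc_p=Z_p(p\gamma)\otimes S(-p)$, we need
$$H_B^2(Z_0)_{\mu}=H_B^2(Z_1)_{\mu+\gamma}=H_B^2(Z_2)_{\mu+2\gamma}=0.$$

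\emph{The case $p=0$.} Here $Z_0=J$. Lemma \ref{lem:HJ} iv) with $i=2$ only needs $H_B^{j+1}(R)_{\mu-j\gamma}=0$ for all $j\geqslant1$. This is exactly $\mu\notin\Supp_G H^{p}_B(R)+(p-1)\gamma$ for $p\geqslant2$, which holds because $\mu\notin\Rf$. Hence $H_B^2(J)_\mu=0$, and so $H_B^2(\Zc_0)_\mu=0$.

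\emph{Reduction for $p=1,2$.} We have $H_B^2(K_p)_{\mu+p\gamma}=\bigoplus H_B^2(J)_{\mu}=0$ by the case $p=0$. So Lemma \ref{lem:HZ} iv) applies and gives
$$H_B^2(Z_1)_{\mu+\gamma}=H_B^0(H_0)_{\mu+\gamma},\qquad H_B^2(Z_2)_{\mu+2\gamma}=H_B^0(H_1)_{\mu+2\gamma}.$$
It therefore remains to show that $H_B^0(H_{p-1}(\mathbf f;J))_{\mu+p\gamma}=0$ for $p=1,2$. For this I would run the double complex $\mathcal C_B^\bullet(K_\bullet(\mathbf f;J))$ in degree $\mu+p\gamma$, exactly as in the proof of Lemma \ref{lem:HJ}, and compare the two spectral sequences.

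\emph{Row-filtered side.} By Lemma \ref{lem:HH} iii) only $H^0_B(H_q)$ and $H^1_B(H_q)$ survive. The term $H^0_B(H_{p-1})$ sits in total degree $-(p-1)$ in the second-quadrant convention. It can be hit only by a differential $d_2$ from $H^1_B(H_{p})$, or it survives to the abutment.

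\emph{Column-filtered side.} The same total degree receives the groups $H^q_B(K_{p-1+q})_{\mu+p\gamma}$ for $q\geqslant2$, which are direct sums of $H^q_B(J)_{\mu-(q-1)\gamma}$.
\begin{itemize}
\item For $q\geqslant3$, Lemma \ref{lem:HJ} iii) and iv) combined with $\mu\notin\Rf$ kill these, since $\Rf$ contains all shifts $\Supp_G H^p_B(R)+j\gamma$ with $0\leqslant j\leqslant p-1$.
\item For $q=2$ we get $H^2_B(J)_{\mu-\gamma}=H^1_B(R/J)_{\mu-\gamma}$ by Lemma \ref{lem:HJ} ii), because $H^2_B(R)_{\mu-\gamma}=0$. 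This group need not vanish, and it is here that condition \eqref{eq:sum} must enter.
\end{itemize}

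\emph{Main obstacle.} The hardest step is to show that the surviving $q=2$ contribution $H^1_B(R/J)_{\mu-\gamma}$ does not feed into $H_B^0(H_{p-1})$. It must either be cancelled by the column differential $H^2_B(K_{p+1})\to H^2_B(K_{p})$, which is induced by the $f_i$, or be accounted for by $H^1_B(H_p)$. My first attempt is to use \eqref{eq:sum} to split $H^1_B(R/J)_{\mu-\gamma}=\bigoplus_i H^1_B(R/J_i)_{\mu-\gamma}$ into contributions localized at the individual base points $\pf_i$. All Koszul homologies $H_q(\mathbf f;J)$ are supported on $V(I)$ and so split the same way, via \eqref{eq:H01}. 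One then compares, base point by base point, the local Koszul complex of $\mathbf f$ on $J_{\pf_i}$ with its image in $H^1_B(R/J_i)$. The aim is to show that the induced map from the $H^2_B$ column onto $H^1_B(H_p)$ is surjective and that the edge map into $H^0_B(H_{p-1})$ vanishes in the relevant degree. Without \eqref{eq:sum}, global sections of $R/J$ in degree $\mu-\gamma$ could fail to decompose over the base points, and this local comparison would break. If this direct argument stalls, a fallback is to mimic the $J=R$ argument of \cite{Bot11} after replacing $R/J$ by $\bigoplus_i R/J_i$, which \eqref{eq:sum} permits in degree $\mu-\gamma$.
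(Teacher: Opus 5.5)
\textbf{There is a genuine gap: the step you label the main obstacle is the whole content of the lemma, and your proposal does not prove it.}

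Your reductions are sound. By Lemma \ref{lem:HJ} iv), $H_B^2(\Zc_0)_\mu=H_B^2(J)_\mu=0$. Since $H_B^2(K_p)_{\mu+p\gamma}=0$, Lemma \ref{lem:HZ} iv) turns the other two claims into $H_B^0(H_{p-1})_{\mu+p\gamma}=0$ for $p=1,2$. You also correctly single out $H_B^2(K_{p+1})_{\mu+p\gamma}\cong\bigoplus H_B^1(R/J)_{\mu-\gamma}$ as the only possible nonzero contribution. But what you offer for that step (``compare, base point by base point'', ``the aim is to show \dots{} that the edge map vanishes'') restates the goal without giving a mechanism. The fallback does not help either: for $J=R$ one has $H^2_B(K_{p+1})_{\mu+p\gamma}=\bigoplus H^2_B(R)_{\mu-\gamma}=0$, so the obstruction never arises in \cite{Bot11} and there is nothing to imitate.

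Two smaller points. On the row-filtered side there is no $d_2$ from $H^1_B(H_p)$ to $H^0_B(H_{p-1})$. The two terms lie in the same total degree, the sequence degenerates at $E_2$ because only two rows are nonzero, and they are the sub and the quotient of the abutment. Also, the double complex can be bypassed. Lemma \ref{lem:HZ} iii) gives $H^3_B(Z_{p+1})_{\mu+p\gamma}=0$ and Lemma \ref{lem:HH} iii) gives $H^2_B(H_p)=0$. Together these yield a surjection $\lambda\colon H^2_B(K_{p+1})_{\mu+p\gamma}\twoheadrightarrow H^2_B(Z_p)_{\mu+p\gamma}$.

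What is missing is a torsion-versus-divisibility argument.
\begin{itemize}
\item \emph{The image is $B$-torsion.} Let $\varphi$ be the composite $H^1_B(R/J)^{\oplus}\hookrightarrow H^2_B(J)^{\oplus}=H^2_B(K_{p+1})\to H^2_B(Z_p)$, with the appropriate shifts, defined in all degrees. In the relevant degree it is all of $\lambda$, because $H^2_B(R)_{\mu-\gamma}=0$. The Koszul differential has entries $\pm f_i\in I\subset J$, and $J$ annihilates $H^1_B(R/J)$. So the composite of $\varphi$ with $H^2_B(Z_p)\to H^2_B(K_p)$ is zero. Hence $\im\varphi$ lies in the kernel of that map, which is $H^1_B(B_{p-1})\cong H^0_B(H_{p-1})$. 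This is a finitely generated $B$-torsion module, so it is killed by $B^k$ for some $k$.
\item \emph{The source is divisible.} Let $J_i$ be the primary component at a base point in the chart of a maximal cone $\sigma$, and set $\tau_i=x^{\hat{\sigma}}$. Every other generator of $B$ is nilpotent modulo $J_i$, so $H^1_B(R/J_i)=(R/J_i)_{\tau_i}/(R/J_i)$ is $\tau_i$-divisible.
\item \emph{Role of \eqref{eq:sum}.} It lets you write any $a\in H^1_B(R/J)_{\mu-\gamma}$ as $\sum_i a_i$, with $a_i$ mapping to zero in $H^1_B(R/J_j)$ for $j\neq i$. The map $H^1_B(R/J)\to\bigoplus_j H^1_B(R/J_j)$ is surjective by iterating \eqref{eq:Hsum}. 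So lift a $\tau_i^k$-th root of the image of $a_i$ in $H^1_B(R/J_i)$ to some $c_i\in H^1_B(R/J)$ with zero image at the other components. The injectivity in \eqref{eq:sum} then gives $a_i=\tau_i^k c_i$, and therefore $\varphi(a_i)=\tau_i^k\varphi(c_i)=0$.
\end{itemize}
Thus $\lambda=0$, and since $\lambda$ is surjective, $H^2_B(Z_p)_{\mu+p\gamma}=0$. Without this argument or an equivalent one, your proposal does not establish the lemma.
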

\begin{proof}
$H_B^2(Z_0)_{\mu}=H_B^2(J)_{\mu}$ follows from Lemma \ref{lem:HJ} iv). Moreover, Lemma \ref{lem:HZ} iii) gives $H_B^3(Z_3)_{\mu+2\gamma}=0$. This yields the surjection $H_B^2(K_3)_{\mu+2\gamma}\twoheadrightarrow H_B^2(B_2)_{\mu+2\gamma}$. Moreover, using $H_B^2(H_2)=0$ and Lemma \ref{lem:HJ} ii), we obtain the surjection $\lambda: H_B^2(K_3)_{\mu+2\gamma}\rightarrow H_B^2(Z_2)_{\mu+2\gamma}$. This map is composed of graded components of maps 
$$\varphi:H_B^1(R/J)\rightarrow H_B^2(Z_2).$$
We now analyze the left-hand side using the condition \eqref{eq:sum}. Let $\Sigma(1)=\{\rho_1,\cdots,\rho_l\}$ and $R=k[s_{1},\cdots,s_l]$, where $s_i$ corresponds to $\rho_i$. Choose a cone $\sigma\in \Sigma$ with $\dim \sigma=n$. Let $\mathcal{E}=\{i\mid\rho_i \npreceq \sigma \}$ and $\tau=\prod_{i\in\mathcal{E}} s_i$. Without loss of generality, assume that $\Supp_{\Xr}(J_1)$ is the point defined by $s_i=1$ for $i\in \mathcal{E}$ and $s_i=0$ for $i\notin \mathcal{E}$. For any maximal cone $\sigma'\neq\sigma$, we can deduce that $\rho_i\preceq \sigma'$ for some $i\in \mathcal{E}$.  This implies that all generators of $B$ except $\tau$ become nilpotent modulo $J_1$. Consequently,
$$H_B^1(R/J_1)=H_{(\tau)}^1(R/J_1)=(R/J_1)_{
\tau}/(R/J_1).$$
The first equality follows from changing the ring to $R/J$ and taking the radical of $B$ in $R/J$. The second comes from the Čech complex. Thus, it is an Artinian module divisible by $\tau$. On the other hand, consider the following exact sequence 
$$\xymatrix{0 \ar[r] & H_B^0(H_1) \ar[r]^{d} &  H_B^2(Z_2)  \ar[r]^{\delta_2^2} & H_B^2(K_2)}$$ 
induced by $0\rightarrow Z_2\rightarrow K_2\rightarrow B_1\rightarrow 0$ and $H_B^0(H_1)=H_B^1(B_1)$. Since $I\subset J$, the module $H_B^1(R/J)$ is annihilated by $I$. Hence, the composite map $ \delta_2^2 \cdot \varphi$ is zero by the construction of Koszul complex, which implies $\im(\varphi)\subset \im(d)$. Moreover, $H_B^0(H_1)$ is a Noetherian $B$-torsion module, so $B^k\cdot \im(\varphi)=0$ for some $k>0$. Now take $a\in H_B^1(R/J)_{\mu-\gamma}$ and write $a=\sum a_i$ with $a_i\in H_B^1(R/J_i)_{\mu-\gamma}$. We have 
$$\varphi(a_1)=\tau^k\cdot \varphi\left(\frac{a_1}{\tau^k}\right)=0.$$
Similar results hold for each $a_i$. Hence, we deduce $\varphi=0$, which indicates $H_B^2(Z_2)_{\mu+2\gamma}=0$ by the surjectivity of $\lambda$. Finally, $H_B^2(Z_1)_{\mu+\gamma}=0$ follows in the same way.
\end{proof}

This leads to the main theorem about linear matrices.

\begin{thm}\label{thm:MPM}
Let $J$ be a homogeneous ideal satisfying the conditions \ding{173},\ding{174},\ding{175}. Let $\mu\notin \Rf$ with $(H_0(\mathbf{f};R))_{\mu+\gamma}=0$ and $J_{\mu}\neq 0$. Then the complex 
\begin{equation}\label{eq:MP}
	  \cdots \rightarrow (\Zc_3)_{\mu}\rightarrow (\Zc_2)_{\mu} \rightarrow (\Zc_1)_{\mu}\xrightarrow{\MM_{\mu}} (\Zc_0)_{\mu}
\end{equation}	
is a minimal graded free resolution of the $S$-module $(\Sc_I)_{\mu}$. Its determinant equals $H^{\deg(\phi)}\cdot G$ for some $F\in S$. If $J$ satisfies condition \ding{172}, then $F\in k^{\times}$.
\end{thm}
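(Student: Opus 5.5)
The plan is to mirror the proof of Theorem \ref{thm:MQM}, replacing the region $\mathscr{A}$ by $\Rf$ and using Lemma \ref{lem:H2Z2} to kill the degree-$2$ local cohomology that previously produced the quadratic columns. The first step is to check that $\mu\notin\Rf$ implies $\mu\notin\mathscr{A}$; this follows by comparing the two unions. Consequently, Lemma \ref{lem:A} applies. By condition \ding{175}, $(\mathcal{H}_p)_\mu=0$ for $p\geqslant1$, so the complex \eqref{eq:MP} is acyclic in degree $\mu$ and resolves $(\mathcal{H}_0)_\mu=(\Sc_I)_\mu$. Each $(\Zc_p)_\mu=(Z_p)_{\mu+p\gamma}\otimes_k S(-p)$ is a free $S$-module generated in degree $p$, and the differentials are linear. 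Since the complex is linear, its differentials have no unit entries, so it is minimal.

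For the determinant, I would first show $H^0_B(\Sc_I)_\mu=0$. Lemma \ref{lem:A} ii) gives $H_B^0(\Sc_I)_\mu=\ker(\delta^2_2)_\mu$. Lemma \ref{lem:H2Z2} gives $H^2_B(\Zc_2)_\mu=0$, but it needs \eqref{eq:sum}. I would derive \eqref{eq:sum}, or bypass it, from the hypothesis $H_0(\mathbf f;R)_{\mu+\gamma}=0$. Indeed, $H_B^0(H_0)_{\mu+\gamma}=0$ combined with Lemma \ref{lem:HZ} iv) already gives $H^2_B(Z_1)_{\mu+\gamma}=0$, provided $H^2_B(K_1)_{\mu+\gamma}=0$, which holds since $\mu\notin\Rf$ and by Lemma \ref{lem:HJ}. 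For $Z_2$, I would rerun the surjection $\lambda$ argument of Lemma \ref{lem:H2Z2}. This is where I expect the main obstacle: it requires either checking \eqref{eq:sum} in degree $\mu-\gamma$, or seeing that the strong vanishing of $H_0(\mathbf f;R)$, hence of $R/I$, in degree $\mu+\gamma$ forces the relevant $H^1_B(R/J)$ components to vanish, since $R/J$ is a quotient of $R/I$. If this fails, I would fall back on assuming \eqref{eq:sum} as in Lemma \ref{lem:H2Z2}.

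Granting this, $H^i_B(\Sc_I)_\mu=0$ for all $i$ by the spectral sequence of Lemma \ref{lem:A}. Lemma \ref{lem:Rees} then gives $H^1_B(\Rc_I(J))_\mu=0$ via the sequence \eqref{eq:RISI}, and likewise $H^i_B(\Rc_I(R))_\mu=0$ by part iii). Taking $\qf=(H)$, localization kills $Q(I;J)/Q_1(I;J)$, which is supported over the base points and hence vanishes by \eqref{eq:empty}. Lemma \ref{lem:RIJq} i) and \eqref{eq:H01} then give
$$\dim_{\kappa(\qf)}(\Sc_I\otimes\kappa(\qf))_\mu=\dim_{\kappa(\qf)}(\Rc_I(R)\otimes\kappa(\qf))_\mu=\deg(\phi).$$
Since $J_\mu\neq0$, the annihilator of $(\Sc_I)_\mu$ has radical $(H)$ up to the extra factor supported on $Q(I;J)/Q_1(I;J)$. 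The argument of \cite[Theorem 5.5]{Bot11} then gives $\det=H^{\deg\phi}F$, where $F$ collects the contributions of the primes in the support of $(Q(I;J)/Q_1(I;J))_\mu$.

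Finally, under \ding{172}, which implies \ding{176} (linear type with respect to $J$ locally at the base points), we get $Q(I;J)=\langle Q_1(I;J)\rangle$ locally. Hence $(\Sc_I)_\mu=(\Rc_I(J))_\mu$, and $(\Rc_I(J))_\mu$ is a torsion-free module over $S/(H)$ because it sits inside the domain $\Rc_I(R)$. Its determinant therefore has no other prime factors, so $F\in k^\times$.
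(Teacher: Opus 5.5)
Your route is the paper's. Its proof combines three ingredients, all of which you use:
\begin{itemize}
\item Lemma~\ref{lem:A}~ii) gives acyclicity, so the linear complex is a minimal resolution of $(\Sc_I)_\mu$.
\item Lemma~\ref{lem:H2Z2} kills $H^2_B(\Zc_i)_\mu$ for $i=0,1,2$. This yields $H^0_B(\Sc_I)_\mu=\ker(\delta^2_2)_\mu=0$, $H^2_B(J)_\mu=0$, and, via Lemma~\ref{lem:HZ}~iv), $H^0_B(H_0)_{\mu+\gamma}=0$.
\item Theorem~\ref{thm:MQM} then applies with $(Q_2/Q_1)_\mu=0$.
\end{itemize}
Your re-run of the determinant computation, and your use of \ding{172}$\Rightarrow$\ding{176} to get $F\in k^\times$, are exactly the proof of Theorem~\ref{thm:MQM}.

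The obstacle you flag is genuine, and the paper does not remove it either. Lemma~\ref{lem:H2Z2} is cited as is, so \eqref{eq:sum} in degree $\mu-\gamma$ is used without being derived from the stated hypotheses. In the applications, e.g.\ the proposition quoted from \cite{BD16}, it is supplied separately by Proposition~\ref{prop:ast}. Your fallback of assuming \eqref{eq:sum} is therefore what the paper effectively does. Your proposed ways around it, however, do not work.
\begin{itemize}
\item The vanishing $H^0_B(H_0)_{\mu+\gamma}=0$ that you feed into Lemma~\ref{lem:HZ}~iv) concerns $H_0=H_0(\mathbf f;J)=J/IJ$, not $R/I$. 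In the paper it is an output of the $Z_1$-half of Lemma~\ref{lem:H2Z2}, so it already depends on \eqref{eq:sum}. Even granted, it only handles $\Zc_1$, whereas $H^0_B(\Sc_I)_\mu=\ker(\delta^2_2)_\mu$ needs control of $H^2_B(\Zc_2)_\mu$.
\item \eqref{eq:sum} does not come from any vanishing of $H^1_B(R/J)_{\mu-\gamma}$. That group is typically nonzero: when $R_{\mu-\gamma}=0$ it has dimension $\deg J$ by \eqref{eq:H01}. What \eqref{eq:sum} actually requires is the vanishing of the kernels $(R/(J_j+\bigcap_{i<j}J_i))_{\mu-\gamma}$ in \eqref{eq:Hsum}.
\item The hypothesis on $H_0(\mathbf f;R)$ cannot supply \eqref{eq:sum}. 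Lemma~\ref{lem:Rees}~iii) consumes it in the form $H^0_B(H_0(\mathbf f;R))_{\mu+\gamma}=0$, and in that form it already follows from $\mu\notin\Rf$: apply Lemma~\ref{lem:H2Z2} with $J=R$, where \eqref{eq:sum} is vacuous, together with Lemma~\ref{lem:HZ}~iv). So it carries no information beyond $\mu\notin\Rf$.
\item Read literally, $(R/I)_{\mu+\gamma}=0$ is no better. On $\PP^n$, for example, it is incompatible with $J_\mu\neq0$ as soon as a base point exists.
\end{itemize}
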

\begin{proof}
This follows from Lemma \ref{lem:A} ii), Theorem \ref{thm:MQM} and Lemma \ref{lem:H2Z2}.
\end{proof}

\medskip
Compared with Theorem \ref{thm:MQM}, we actually give an upper bound for $H_B^2(J)_{\mu}=H_B^2(H_0)_{\mu+\gamma}=0$. Finally, we provide a criterion for the condition \eqref{eq:sum}.

\begin{prop}\label{prop:ast}
If $\mu\in G$ satisfies $R_{\mu-\gamma}=0$, then $\mu$ satisfies \eqref{eq:sum}.
\end{prop}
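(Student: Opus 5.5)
The plan is to compare $H_B^1(R/J)_{\mu-\gamma}$ with $\bigoplus_i H_B^1(R/J_i)_{\mu-\gamma}$ through the exact sequence attached to the primary decomposition:
$$0\rightarrow R/J\rightarrow \bigoplus_i R/J_i\rightarrow C\rightarrow 0.$$
Here the $J_i$ are primary to ideals of distinct base points, so $V(J_i)\cap V(J_j)=\emptyset$ in $\Xr$ for $i\neq j$. Since $J=J^{sat}$ by \ding{174}, I may discard any $B$-primary components, and then every $J_i$ is saturated.

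The first step is to show that $C$ is $B$-torsion. On each affine chart of $\Xr$ the sheaves $\widetilde{R/J_i}$ have pairwise disjoint supports. Hence $\widetilde{R/J}\rightarrow\bigoplus_i\widetilde{R/J_i}$ is an isomorphism of sheaves (Chinese remainder theorem locally), so $\widetilde C=0$. It follows that $H_B^0(C)=C$ and $H_B^i(C)=0$ for $i\geqslant1$.

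The second step uses the long exact sequence of local cohomology. Since every $J_i$ is saturated, $H_B^0(R/J)=H_B^0(R/J_i)=0$. Since $C$ has no higher local cohomology, we get, degree by degree,
$$0\rightarrow C\rightarrow H_B^1(R/J)\rightarrow \bigoplus_i H_B^1(R/J_i)\rightarrow 0.$$
So \eqref{eq:sum} in degree $\mu-\gamma$ is equivalent to $C_{\mu-\gamma}=0$. Here I read \eqref{eq:sum} as saying that the natural map is an isomorphism, or at least that the dimensions agree; these statements are equivalent because everything is finite-dimensional in each degree.

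The third step is immediate: $C_{\mu-\gamma}$ is a quotient of $\bigoplus_i (R/J_i)_{\mu-\gamma}$, which is a quotient of $R_{\mu-\gamma}^{\oplus l}=0$. Hence $C_{\mu-\gamma}=0$.

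The main obstacle is the first step: one must check that the sheaf-level Chinese remainder argument works on a simplicial toric variety and that it yields $B$-torsion through the toric Nullstellensatz. This is where the smoothness of $\Xr$ at $V(I)$ and \cite[Proposition 5.3.10]{CLS11} would be invoked, exactly as in Lemma \ref{lem:RIJq}. One also has to justify that the components of the decomposition can be taken with pairwise disjoint supports in $\Xr$, possibly after grouping the components supported at the same point.
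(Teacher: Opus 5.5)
Your proof is correct and is essentially the paper's argument. The paper runs the Mayer--Vietoris sequence for $J_j$ and $\bigcap_{i<j}J_i$ inductively; its $B$-torsion terms $R/(J_j+\bigcap_{i<j}J_i)$ are exactly the successive quotients of a filtration of your single cokernel $C$, which vanishes in degree $\mu-\gamma$ for the same reason. For your ``main obstacle'', the $B$-torsion of $C$ follows directly from the toric Nullstellensatz on the simplicial $\Xr$: it gives $B^k\subset J_i+\bigcap_{j\neq i}J_j$, so $B^k$ kills every generator of $C$, and you need neither the smoothness hypothesis nor the sheaf-level argument.
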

\begin{proof}
Let $J=\cap_{1\leqslant i\leqslant l} J_i$ be the minimal primary decomposition. For $2\leqslant j\leqslant l$, $J_j+\cap_{1\leqslant i\leqslant j-1}J_i$ is $B$-torsion. Thus, from the Mayer-Vietoris sequence induced by the primary decomposition, we have the following exact sequence
\begin{equation}\label{eq:Hsum}
0\rightarrow R/(J_j+\cap_{1\leqslant i\leqslant j-1}J_i)\rightarrow H_B^1(R/\cap_{1\leqslant i\leqslant j} J_i) \rightarrow H_B^1(R/\cap_{1\leqslant i\leqslant j-1} J_i)\oplus H_B^1(R/J_j)\rightarrow 0.
\end{equation}
If $R_{\mu-\gamma}=0$, then $(R/(J_j+\cap_{1\leqslant i\leqslant j-1}J_i))_{\mu-\gamma}=0$ for $2\leqslant j\leqslant l$. This implies $H_B^1(R/J)_{\mu-\gamma}=\oplus_i H_B^1(R/J_i)_{\mu-\gamma}$ by induction on $j$.
\end{proof}

\section{Local properties and construction of coefficient ideal}\label{sec:adj}
In this section, we analyze the conditions \ding{172}-\ding{176} locally at the base points and propose several methods to construct the coefficient ideal $J$.

\medskip
 We recall conditions \ding{172}-\ding{176} locally at the base point and introduce a new condition. The smoothness of $\Xr$ at the base points ensures us to study in an $n$-dimensional regular local ring $(R,\mf,k)$. Let $I,J$ be $\mf$-primary ideals. Note that the condition \ding{174} is locally trivial.
\par \ding{172} There exist $f_1'\cdots,f_n'\in I$ such that $IJ=(f_1',\cdots,f_n')J$.
\par \ding{173} $I\subset J$.
\par  \ding{175} There exist $f_1'\cdots,f_m'\in I$ such that $IJ=(f_1',\cdots,f_m')J$ and $f_1',\cdots,f_m'$ form a proper sequence with respect to $J$. 
\par \ding{176} $\Sc_I(J)=\Rc_I(J).$
\par  \ding{177} There exist $f_1'\cdots,f_{n+1}'\in I$ such that $IJ=(f_1',\cdots,f_{n+1}')J$.

\begin{prop}\label{prop:lci}
The following properties hold:
	\begin{itemize}
		\item[i)] \ding{172} $\Rightarrow$ \ding{177}.
		\item[ii)] \ding{172} $\Rightarrow$ \ding{176}.
        \item[iii)] \ding{173} $+$ \ding{177} $\Rightarrow$ \ding{175}.
	\end{itemize}
\end{prop}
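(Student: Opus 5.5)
The plan is to treat the three implications separately, working throughout in the $n$-dimensional regular local ring $(R,\mf,k)$ with $I,J$ $\mf$-primary and $k$ infinite. Item i) is immediate: if $f_1',\dots,f_n'$ satisfy \ding{172}, set $f_{n+1}'=f_1'$ (or any element of $I$). Then $(f_1',\dots,f_{n+1}')J=(f_1',\dots,f_n')J=IJ$, which is \ding{177}.

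A fact used in both ii) and iii) comes first. Suppose $IJ=(g_1,\dots,g_m)J$ with $g_j\in I$. Then $\sqrt{(g_1,\dots,g_m)}\supseteq\sqrt{IJ}=\mf$, so $(g_1,\dots,g_m)$ is $\mf$-primary. Replacing the $g_j$ by generic $k$-linear combinations does not change the ideal they generate, so the equality $IJ=(g)J$ is preserved. After this replacement, any $n$ of the $g_j$ form a system of parameters, hence a regular sequence in the regular local ring $R$.

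For iii), take $f_1',\dots,f_{n+1}'$ from \ding{177}, already made generic, so that $f_1',\dots,f_n'$ is a regular sequence. Fix $1\leqslant i\leqslant n$ and use the short exact sequence $0\to J\to R\to R/J\to 0$. Its Koszul long exact sequence, together with $H_p(f_1',\dots,f_i';R)=0$ for $p\geqslant1$, gives
\[
H_1(f_1',\dots,f_i';J)\cong H_2(f_1',\dots,f_i';R/J).
\]
The right-hand side is annihilated by $\ann(R/J)=J$. By \ding{173}, $f_{i+1}'\in I\subset J$, so $f_{i+1}'$ annihilates $H_1(f_1',\dots,f_i';J)$ for every $1\leqslant i\leqslant n$. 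Thus $f_1',\dots,f_{n+1}'$ is a proper sequence with respect to $J$, and together with $IJ=(f')J$ this is \ding{175}.

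For ii), let $K=(f_1',\dots,f_n')$, made generic so that it is generated by a regular sequence. Let $g_1,\dots,g_r$ be the remaining generators of $I$, with variables $x_{g_j}$ in the presentation, and let $y_1,\dots,y_n$ be the variables for the $f_i'$.

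First, I note that $I^NJ=K^NJ$ for all $N$, by induction: $I^{N}J=I^{N-1}(IJ)=I^{N-1}KJ=K(I^{N-1}J)=K^NJ$. So $\Rc_I(J)=\bigoplus_N K^NJ\,t^N$.

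Second, I reduce to polynomials in the $y$-variables alone. For $a\in J$ we have $ag_j\in IJ=KJ$, so $ag_j=\sum_i c_{ji}f_i'$ with $c_{ji}\in J$. Hence $a\,x_{g_j}-\sum_i c_{ji}y_i\in Q_1(I;J)$. Using these linear relations, any element of $J[x]$ is congruent modulo $\langle Q_1(I;J)\rangle$ to an element of $J[y]$; one argues by induction on the total degree in the $x_{g_j}$. It therefore suffices to show that every $F\in J[y]_N$ with $F(f')=0$ lies in $\langle Q_1(I;J)\rangle$.

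This last step is the main obstacle. Since $f'$ is regular, $K$ is of linear type over $R$. So the kernel of $R[y]_N\to K^N$ is generated by the Koszul relations $f_i'y_j-f_j'y_i$ times monomials of degree $N-2$; the difficulty is that the coefficients must be arranged to lie in $J$. Tensoring $0\to\ker_N\to R[y]_N\to K^N\to0$ with $J$ shows that the obstruction is the kernel of the multiplication map $K^N\otimes_R J\to K^NJ$, which is $\mathrm{Tor}_1^R(R/K^N,R/J)$ modulo $\mathrm{Tor}$-terms coming from $K^N$. I would first try induction on $N$. Take $F\in J[y]_N$ in the kernel and write $F=\sum_i y_iF_i$ with $F_i\in J[y]_{N-1}$. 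Then $\sum_i f_i'F_i(f')=0$ expresses a Koszul $1$-cycle of the regular sequence $f'$ with entries in $K^{N-1}J$. Such a cycle is a boundary, with coefficients that can be taken in $K^{N-2}J$ by exactness of the Koszul complex on $\mathrm{gr}_K$, since $K^{N-1}J/K^NJ$ is a module over the polynomial ring $\mathrm{gr}_K(R)$. Subtracting the corresponding combination of $J$-multiples of linear relations reduces $F$ to a sum of terms $y_iF_i'$ in which each $F_i'\in J[y]_{N-1}$ vanishes at $f'$. The induction hypothesis then applies to each $F_i'$.
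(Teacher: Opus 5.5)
Your parts i) and iii) are fine. Your iii) is the paper's argument in slightly different form. The paper notes that $Z_1(\mathbf{f}';J)=Z_1(\mathbf{f}';R)=B_1(\mathbf{f}';R)$, because the Koszul syzygies have entries in $K\subseteq I\subseteq J$, and hence $g\,Z_1(\mathbf{f}';J)\subseteq J\,B_1(\mathbf{f}';R)=B_1(\mathbf{f}';J)$. Your isomorphism $H_1(f_1',\dots,f_i';J)\cong H_2(f_1',\dots,f_i';R/J)$ does the same job and handles every $i\leqslant n$ explicitly.

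The gap is in ii), at the step you call the main obstacle. You claim that a Koszul $1$-cycle of $\mathbf{f}'$ with entries in $K^{N-1}J$ is the boundary of an element with entries in $K^{N-2}J$. This is false in general, and your justification does not support it. That $K^{N-1}J/K^NJ$ is a module over $\mathrm{gr}_K(R)=(R/K)[T_1,\dots,T_n]$ says nothing about exactness of the Koszul complex of $T_1,\dots,T_n$ on $\mathrm{gr}_K(J)$; that would require the $T_i$ to form a regular sequence on it.

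Here is a counterexample. Take $R=k[s,t]_{(s,t)}$, $I=K=(s,t)$ and $J=(s^2,t^2)$. Then \ding{172} holds trivially, and $F=t^2y_1^2-s^2y_2^2\in J[y]_2$ vanishes at $(s,t)$. Take any decomposition $F=y_1F_1+y_2F_2$ with $F_i\in J[y]_1$. The cycle $(F_1(s,t),F_2(s,t))$ equals $w\,(t,-s)$ with $w\in st+J$, so it is never the boundary of an element of $J=K^{0}J$.

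The conclusion fails as well. One has $Q_1(I;J)=(J:\mf)(ty_1-sy_2)=\mf^2(ty_1-sy_2)$, while $F=(ty_1-sy_2)(ty_1+sy_2)$ with $t,s\notin\mf^2$. Since $R[y]$ is a domain, $F\notin\langle Q_1(I;J)\rangle$, so $\Sc_I(J)\neq\Rc_I(J)$. Hence \ding{172} alone does not give \ding{176}, and no argument that avoids \ding{173} can close this step.

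The missing ingredient is \ding{173}. The paper's one-line proof uses it tacitly: its equality $\langle Q_1(I;J)\rangle=J^{m-n}\oplus(J\cap\langle Q_1(K;R)\rangle)$ needs it. It is also in force wherever ii) is applied, e.g.\ Theorem \ref{thm:adj}.

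With $K\subseteq I\subseteq J$ your obstacle disappears. The Koszul relations $f_i'y_j-f_j'y_i$ already have coefficients in $J$, so $Q_1(K;R)=Q_1(K;J)\subseteq Q_1(I;J)$. Since $K$ is of linear type over $R$, every $F\in J[y]$ with $F(\mathbf{f}')=0$ then lies in $Q(K;R)=\langle Q_1(K;R)\rangle\subseteq\langle Q_1(I;J)\rangle$.

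Combine this with your correct reduction from the $x_{g_j}$ to the $y_i$ via $a\,x_{g_j}-\sum_i c_{ji}y_i\in Q_1(I;J)$, which is the paper's $J^{m-n}$ summand. Together they prove ii) under \ding{172}$+$\ding{173}, with no induction on $N$ and no associated-graded argument.
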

\begin{proof}
i) is obvious.

For ii), let $IJ=(f_1',\cdots,f_{n}')J$ and $K=(f_1',\cdots,f_{n}')$. Denote the number of minimal generators of $I$ by $m$. By comparing the depths, $f_1',\cdots,f_n'$ form a regular sequence in $R$, since $IJ$ is $\mf-$primary. It is well-known that $K$ is of linear type. Then we deduce 
$$\langle Q_1(I;J)\rangle=J^{m-n}\oplus (J \cap \langle Q_1(K;R)\rangle )=J^{m-n}\oplus (J \cap Q(K;R))=Q(I;J),$$
where $J^{m-n}$ corresponds to $m-n$ linear relations of $f_i$ with coefficients in $J$. This proves ii). 

 Next, we prove iii). Let $IJ=(f_1',\cdots,f_{n}',g)J$ and $K=(f_1',\cdots,f_{n}')$. By taking generic linear combinations, we may assume $\depth(K)=n$ without loss of generality. This means $Z_1(\mathbf{f'};R)=B_1(\mathbf{f'};R)$. Since $I\subset J$, we have $Z_1(\mathbf{f'};J)=Z_1(\mathbf{f'};R)$. Hence, we obtain
 $$gZ_1(\mathbf{f'};J)=gZ_1(\mathbf{f'};R)\subset J\cdot B_1(\mathbf{f'};R)=B_1(\mathbf{f'};J),$$ which completes the proof.
\end{proof}

\subsection{Adjoint ideals}
We want to find an ideal $J$ that satisfies conditions \ding{172}, \ding{173}. The condition \ding{172} is introduced and named for coefficient ideals in \cite[Definition 2.1]{AH96}. (We do not use their definition. In fact, we seek a $J$ as small as possible with $I\subset J$ because this may yield a smaller matrix.) In some non-l.c.i.\ examples, such coefficient ideals exist in dimension $n\geqslant3$. However, the existence of $J$ for any $I$ is only known in dimension two, which is thoroughly studied in \cite{HS95,Lip94}. We summarize some of their results below. 

Denote the integral closure of $I$ by $\bar{I}$.

\begin{prop}\label{prop:adj} If $n=2$, then the following properties hold:
\par
	\begin{itemize}
		\item[i)] Let $N$ be the number of minimal generators of $\bar{I}$. Then the minimal presentation matrix of $\bar{I}$ is of size $N\times (N-1)$. The ideal generated by all $N-2$ minors of this matrix is equal to the adjoint ideal $\adj(I)$. 
		\item[ii)] The adjoint ideal $\adj(I)$ satisfies conditions \ding{172} and \ding{173}. 
        \item[iii)] If $J$ satisfies \ding{172}, then $J\subset \adj(I)$.
	\end{itemize}
\end{prop}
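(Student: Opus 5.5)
The proposition is essentially a summary of known results on complete ideals in two-dimensional regular local rings, due to Lipman \cite{Lip94} and Huneke--Swanson \cite{HS95}. The plan is to reduce each item to a citation, adding only the small arguments needed to match our conditions \ding{172} and \ding{173}. Throughout, $(R,\mf,k)$ is a two-dimensional regular local ring and $I$ is $\mf$-primary. We may enlarge $k$ to be infinite, so that minimal reductions generated by two elements exist.

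For i), the plan has three steps.
\begin{itemize}
\item Since $\bar I$ is $\mf$-primary of height $2$ in a two-dimensional regular local ring, the Hilbert--Burch theorem gives a minimal free resolution $0\to R^{N-1}\xrightarrow{A}R^N\to \bar I\to 0$. This fixes the matrix size $N\times(N-1)$.
\item By Zariski's theory, $\bar I$ is a contracted ideal, so $N=\mathrm{ord}(\bar I)+1$.
\item The description of $\adj(I)=\adj(\bar I)$ as the ideal of $(N-2)$-minors of $A$ is Lipman's result \cite{Lip94} (see also \cite{HS95}). One shows that $\adj(I)=\bar I : \mf^{\,?}$-type colon ideals agree with the first Fitting-type ideal of $\bar I$ via the canonical module, i.e.\ $\adj(I)\cong \mathrm{Hom}$ of $\omega$ with the Rees algebra. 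I would quote this rather than reprove it.
\end{itemize}

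For ii), there are two conditions to check.
\begin{itemize}
\item Condition \ding{173}, $I\subset\adj(I)$, holds because $\adj(I)\supseteq \bar I\supseteq I$ in dimension two. This is Lipman's inclusion $\bar I\subset\adj(I)$, which holds since the multiplier ideal exponent in dimension $2$ is $\dim-1=1$. Equivalently, it follows from the Brian\c{c}on--Skoda-type statement $\adj(I^{n})\subseteq \bar{I}$ shifted by one power.
\item For condition \ding{172}, take a minimal reduction $K=(f_1',f_2')$ of $I$. The key identity is Lipman's $\adj(I)\,\bar I=\adj(I)\,K$; more precisely $\adj(I^{2})=I\adj(I)$ and $\adj(I^2)=K\adj(I)$ \cite[Lipman]{Lip94}. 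Then $IJ\subseteq \bar I J=KJ\subseteq IJ$, which gives \ding{172} with $J=\adj(I)$.
\end{itemize}

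For iii), suppose $IJ=KJ$ with $K=(f_1',f_2')$. Then $K$ is a reduction of $I$, since $J$ is a faithful module and we can apply the determinantal trick. Hence $J$ is contained in the core/adjoint-type colon $K J:I\supseteq$. I would use Huneke--Swanson's characterization $\adj(I)=\bigcup\{J: IJ=KJ\}=K:I$, valid in dimension two for any minimal reduction $K$ (\cite{HS95}, where $\mathrm{core}$ and $\adj$ are related by $\mathrm{core}(I)=K\adj(I)$ and $\adj(I)=K:I$ for integrally closed $I$). The argument has three steps:
\begin{itemize}
\item From $IJ=KJ\subseteq K$ we get $J\subseteq K:I$.
\item Integral closure preserves this: $K:I=K:\bar I$, using $\bar K=\bar I$ and the fact that $K:I$ is determined by $\bar I$ in dimension two.
\item Finally, $K:\bar I=\adj(I)$.
\end{itemize}

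The main obstacle is matching these literature statements precisely, namely the identities $K:\bar I=\adj(I)$ and $\adj(I)\bar I=K\adj(I)$. These require $\bar I$, not $I$, and infinite residue field, so care is needed in passing from $I$ to $\bar I$. Everything else is routine.
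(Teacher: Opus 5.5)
Your treatment of i) and ii) matches the paper's, which simply cites Lipman and Huneke--Swanson. The Hilbert--Burch count and the sandwich $IJ\subseteq \bar I J=KJ\subseteq IJ$ are correct, harmless additions; for the sandwich, $K$ is a two-generated reduction taken inside $I$, hence also a reduction of $\bar I$.

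Two asides are off:
\begin{itemize}
\item A Brian\c{c}on--Skoda containment $\adj(I^{n})\subseteq \bar I$ bounds $\adj$ from above. It cannot yield $\bar I\subseteq\adj(I)$, which is instead immediate from the valuative definition of $\adj$, whose correction term is nonnegative.
\item You may not ``enlarge $k$'' when the conclusion asks for $f_1',f_2'$ in $I$ itself. Here this is unnecessary anyway, since the residue field at a base point is the algebraically closed $k$.
\end{itemize}

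The genuine gap is step 2 of iii): the identity $K:I=K:\bar I$ is false. In $R=k[[x,y]]$ take $I=K=(x^2,y^2)$. Then $K:I=R$, whereas $\bar I=\mf^2$ and $K:\bar I=(x^2,y^2):xy=\mf$.

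The same example shows that no argument can close this step, because iii) itself fails for non-integrally closed $I$. Indeed, $J=R$ satisfies \ding{172} (take $f_1'=x^2$, $f_2'=y^2$). But by i), $\adj(I)$ is generated by the entries of the $3\times 2$ Hilbert--Burch matrix of $\mf^2$, so $\adj(I)=\mf\not\supseteq R$.

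What your steps 1 and 3 do prove is the corrected statement: if $\bar I J=KJ$ for some two-generated $K\subseteq I$, then $J\subseteq K:\bar I=\adj(I)$. The hypothesis $\bar I J=KJ$ follows from $IJ=KJ$ when $I=\bar I$. So iii) needs a stronger hypothesis ($I$ integrally closed, or \ding{172} for $\bar I$), not a bridge from $I$ to $\bar I$. Only ii) is used later, in Theorem~\ref{thm:adj}, so this does not affect the paper's main results.
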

\begin{proof}
The \emph{adjoint ideal} is first introduced by Joseph Lipman in \cite{Lip94}. Readers unfamiliar with it may treat i) as its definition in dimension two, c.f \cite[Proposition 3.3]{Lip94} and \cite[Corollary 3.6]{HS95}. Condition \ding{172} follows from \cite[Corollary 3.6]{Lip94} and condition \ding{173} from \cite[Remarks 1.2]{Lip94}. Part iii) follows from \cite[Proposition 3.3]{Lip94}. For adjoint ideals in dimension two, see also Section 18.5 in \cite{HS06}.
\end{proof}

\begin{thm}\label{thm:adj}
Use notations of the toric graded setting. A $G$-homogeneous ideal $J$ that satisfies conditions \ding{173}, \ding{174}, \ding{175}, \ding{176} exists in dimension two for any $I$. Moreover, if $k=\CC$ and each $f_i$ is a polynomial over $\QQ$, then $J$ can be generated by polynomials over $\QQ$. 
\end{thm}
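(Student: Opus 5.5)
The plan is to build $J$ locally at each base point from adjoint ideals and then glue the pieces via the toric ideal–variety correspondence. For each $\pf\in V(I)$, the ring $\mathcal{O}_{\Xr,\pf}$ is a two-dimensional regular local ring because $\Xr$ is smooth at $V(I)$, and $I_{\pf}$ is $\mf_{\pf}$-primary. By Proposition \ref{prop:adj} ii), the local adjoint ideal $\adj(I_{\pf})$ satisfies \ding{172} and \ding{173} locally. Proposition \ref{prop:lci} then gives \ding{176} from ii), and \ding{175} from iii) combined with i). So locally at every base point the adjoint ideal has all the properties we need.

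Next I would globalize. For each $\pf$, I take $J_{\pf}$ to be the $G$-homogeneous, $B$-saturated ideal of $R$ whose sheaf is $\mathcal{O}_{\Xr}$ away from $\pf$ and equals $\adj(I_{\pf})$ at $\pf$. Concretely, I would pick an affine chart $U_\sigma\ni\pf$, whose coordinate ring is $(R_{x^{\hat\sigma}})_0$, a polynomial ring since $\Xr$ is smooth there. The $\mf_{\pf}$-primary ideal $\adj(I_{\pf})$ contracts to an ideal of this polynomial ring that is cofinite and supported at $\pf$. Pulling it back gives a homogeneous ideal $J_{\pf}=\{g\in R\ \text{homogeneous} : g/x^{\hat\sigma\,m}\in \adj(I_{\pf})\cap (R_{x^{\hat\sigma}})_0\}$, which is automatically saturated; this is the toric Nullstellensatz/ideal–variety correspondence invoked in Section \ref{sec:pre}. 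I then set $J=\bigcap_{\pf\in V(I)} J_{\pf}$. Saturation (\ding{174}) is preserved under intersection. Since conditions \ding{172}–\ding{176} are local at base points (and \ding{173} away from $V(I)$ is trivial, as $J$ is the unit ideal there), $J$ satisfies \ding{173}, \ding{175} and \ding{176}. Globally \ding{173} follows because $I\subset I^{sat}$ and $I^{sat}$ localizes into each $\adj(I_{\pf})$. The main obstacle is verifying that local properties such as \ding{175}, a statement about Koszul homology with coefficients in $J$, can be checked after localizing at base points. This works because Koszul homology commutes with localization and the relevant homologies are supported on $V(I)$, so \ding{175} is really the statement that $\mathcal{H}_i$ is $B$-torsion, as used in Lemma \ref{lem:A}.

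For the rationality statement, I would avoid working at each individual base point over $\CC$, since the points may not be rational. Instead I use the Galois-invariant description. The adjoint ideal in dimension two is described algebraically by Proposition \ref{prop:adj} i): it is generated by the $(N-2)$-minors of a presentation matrix of $\bar I_{\pf}$. Integral closure commutes with flat base change $\QQ\subset\CC$ (a separable extension, so the fibers are normal), and so do presentation matrices and Fitting ideals. Therefore the global ideal $J$ can be built over $\QQ$. Concretely, I would replace the construction by $J=\mathrm{Fitt}$-type ideal $\bigl(\text{$(N-2)$-minors of a presentation of }\overline{I^{sat}}\bigr)^{sat}$, computed over $\QQ$. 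Its extension to $\CC$ agrees with the intersection above, because each $\adj(I_{\pf})$ is recovered by localization and these operations commute with localization and base change. Equivalently, the ideal built over $\CC$ is stable under $\mathrm{Gal}(\bar\QQ/\QQ)$, which permutes the base points and their adjoint ideals, so it is defined over $\QQ$. The delicate point I expect here is the compatibility of integral closure with the base change from $\QQ$ to $\CC$, and I would justify it by the separability argument above.
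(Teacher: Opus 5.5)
Your existence argument is the paper's argument: take the adjoint ideal at each base point, transport it to a saturated homogeneous ideal of $R$, and intersect. Proposition~\ref{prop:adj} then gives \ding{172} and \ding{173}, and Proposition~\ref{prop:lci} gives \ding{175} and \ding{176}. Two small remarks. First, \ding{175} and \ding{176} are defined as local conditions at the base points, so the ``main obstacle'' you flag does not arise. Second, the chart through $\pf$ only needs to be regular (take the smooth cone whose orbit contains $\pf$); it need not be a polynomial ring.

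For the rationality claim your route genuinely differs from the paper's.

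\emph{The paper's route.} It fixes a base point $P=(a_1,\dots,a_l)$ and lets $\sigma\in\mathrm{Gal}(\QQ[a_1,\dots,a_l]/\QQ)$ act. Using $\sigma(I_P)=I_{\sigma(P)}$ and the canonicity of adjoint ideals, it gets $\sigma(J_P)=J_{\sigma(P)}$. It concludes that $\bigcap_\sigma J_{\sigma(P)}$ is Galois-stable, hence defined over $\QQ$. This uses only that $\adj$ is intrinsic. However, it works through the possibly irrational base points, and it tacitly needs a Galois closure and the fact that the local adjoints are defined over that number field.

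\emph{Your route.} You identify $J$ globally. By Proposition~\ref{prop:adj}~i) and the definition of Fitting ideals, $\adj(I_\pf)=\mathrm{Fitt}_2(\overline{I_\pf})$. Fitting ideals do not depend on the presentation and commute with localization. Hence $(\mathrm{Fitt}_2(\overline{I^{sat}}))^{sat}$ has stalk $\adj(I_\pf)$ at each base point and is trivial off $V(I)$, so it is the paper's $J$. Descent to $\QQ$ then reduces to the compatibility of $\mathrm{Fitt}_2$, saturation and integral closure with $\QQ[\mathbf{s}]\to\CC[\mathbf{s}]$.

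Two points should be stated precisely. The minors must have size two less than the number of generators in whatever global presentation you use (that is, $\mathrm{Fitt}_2$ of the global module), not two less than the local $N$. And integral closure commutes with this base change because the map is flat with geometrically regular fibres in characteristic zero, so it is a normal homomorphism. Separability of $\CC/\QQ$ alone is not the input that makes this work.

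\emph{Comparison.} Your route gives an explicit, base-point-free formula for $J$ that is computable over $\QQ$. This is exactly the kind of algorithm the paper says, right after the theorem, that it is seeking. The price is a global integral-closure computation and a less elementary base-change theorem. The paper's route uses lighter commutative algebra but is non-constructive unless the base points are known.
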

\begin{proof} The ideal $J$ can be constructed as follows:
	\begin{itemize}
		\item[i)] Compute all base points $\pf_i$.
		\item[ii)] For each $\pf_i$, compute the adjoint ideals $\adj(I_{\pf_i})$ in the local ring.
        \item[iii)] Homogenize each $\adj(I_{\pf_i})$ in the Cox ring $R$ and obtain a component $J_{i}$ supported on $\pf_i$.
        \item[iv)] Take the intersection $J=\bigcap_i J_i$.
	\end{itemize}
For details on localization and homogenization in the Cox ring, we refer to \cite[Chapter 5]{CLS11}. The constructed ideal $J$ is saturated and supported on $V(I)$ with $J_{\pf_i}=\adj (I_{\pf_i})$. Thus, it satisfies \ding{172},\ding{173} by Proposition \ref{prop:adj}, which indicates \ding{175}, \ding{176} by Proposition \ref{prop:lci}. 

For the second claim, let $R=\CC[s_{1},\cdots,s_l]$ be the Cox ring and let $P=(a_1,\cdots,a_l)$ be a base point with all $a_i\in \CC$. We consider an arbitrary automorphism $\sigma\in \mathrm{Gal}(\QQ[a_1,\cdots,a_l] /\QQ)$. It is easy to see that $\sigma(I)=I$ and $\sigma(P)$ is also a base point. By the commutativity of automorphism and localization, we obtain $\sigma(I_P)=I_{\sigma(P)}.$ Hence, we can construct the coefficient ideal $J$ that coincides at these $\sigma(P)$, which means 
$$\sigma(J_P)=J_{\sigma(P)}.$$
This indicates $\sigma(\bigcap_{\sigma}J_{\sigma(P)})=\bigcap_{\sigma}J_{\sigma(P)}$, which implies that $J=\bigcap_{\sigma}J_{\sigma(P)}$ is generated by polynomials over $\QQ$. 
\end{proof}

In computer algebra systems, only rational numbers can be effectively represented. Theorem \ref{thm:adj} motivates us to seek a computer algorithm for the coefficient ideal over $\QQ$ without the need of computing the base points even when $I$ has irrational base points.

\subsection{Derivative ideals} As mentioned previously, the idea of choosing coefficient ideals originates from \cite{ZSCC03}, where derivative ideals are used. Assume $\mathrm{char}(k)=0$. Set $R=k[s_1,\cdots,s_n]_{(s_1,\cdots,s_n)}$, $I=(f_1,\cdots,f_m)$, with $f_i\in k[s_1,\cdots,s_n]$ for all $i$. The \emph{derivative ideal} is defined as 
$$\partial I=I+\sum_{i,j}(\frac{\partial f_i}{\partial s_j}).$$
It is defined similarly in a polynomial ring. Adjoint ideals are difficult to compute even when the base points are given, whereas derivative ideals are much easier to handle. We prove the validity of this method for integral monomial ideals in dimension two.

\begin{lem}\label{lem:DI}
Let $(R,\mf,k)$ be a two dimensional regular local ring. Let $I$ be a $\mathfrak{m}$-primary ideal and $J$ be the ideal generated by $l$ generic $k$-linear combinations of generators of $I$ with $l\geqslant2$. Then $JI:x=J(I:x)$ for any $x\in \mathfrak{m}\setminus \mathfrak{m}^2$.
\end{lem}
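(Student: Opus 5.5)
The inclusion $J(I:x)\subset JI:x$ is immediate, so the work is in the reverse inclusion. The plan is to reduce modulo $x$. Since $R$ is a two-dimensional regular local ring and $x\in\mf\setminus\mf^2$, the quotient $\bar R=R/xR$ is a DVR. Because $I$ is $\mf$-primary, its image $\bar I$ is a nonzero principal ideal $\bar I=t^a\bar R$. The central claim I want is that the generic choice of $J=(g_1,\dots,g_l)$ forces $\bar J=\bar I$. More precisely, after reindexing, some generator, say $g_1$, should satisfy $\bar g_1\bar R=\bar I$.

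Given $\bar g_1\bar R=\bar I$, the argument is as follows. Take $y$ with $xy\in JI$ and write $xy=\sum_i g_ia_i$ with $a_i\in I$. Reducing modulo $x$, each $\bar a_i$ lies in $\bar I=\bar g_1\bar R$, so
$$a_i=g_1v_i+xw_i,$$
with $v_i\in R$ and $xw_i\in I$, i.e. $w_i\in I:x$. Substituting gives
$$xy=g_1\Big(\sum_i g_iv_i\Big)+x\sum_i g_iw_i .$$
Hence $g_1\cdot\sum_i g_iv_i\in xR$. Since $\bar g_1\neq0$ in the domain $\bar R$, it follows that $\sum_i g_iv_i\in J\cap xR$. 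Now $J\cap xR=x(J:x)$, so $\sum_i g_iv_i=xz$ with $z\in J:x\subset I:x$, using $J\subset I$. Cancelling the nonzerodivisor $x$ yields
$$y=g_1z+\sum_i g_iw_i\in J(I:x).$$
This finishes the proof once the claim is established.

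The main obstacle is justifying $\bar J=\bar I$ for \emph{every} $x\in\mf\setminus\mf^2$, with $J$ chosen generically once and independently of $x$. Here I would argue by Nakayama. The equality $\bar J=\bar I$ holds iff $J+mI+(I\cap xR)=I$, i.e. iff the span of the images of $g_1,\dots,g_l$ in the vector space $I/\mf I$ is not contained in the subspace $W_x$, the image of $I\cap xR$. The plan is to show the following:
\begin{itemize}
\item[(a)] $W_x$ is a proper subspace. This follows since $\bar I/\mf\bar I\cong k$ is a nonzero quotient of $I/(\mf I+ (I\cap xR))$.
\item[(b)] $W_x$ depends only on finitely much data of $x$, essentially on $x$ modulo a power of $\mf$ determined by $I$. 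Thus the $W_x$ run through an algebraic family of proper subspaces.
\end{itemize}
Given (a) and (b), one wants a generic $l$-dimensional span with $l\geqslant 2$ to avoid containment in all of them. I expect this dimension count to be the delicate point: the family of $x$ up to units and high-order terms has to be shown small compared to the Grassmannian of spans lying in some $W_x$. If a uniform statement fails, I would use the hypothesis $l\geqslant2$ more substantially. Two generic elements of $I$ give a minimal reduction of $I$ (assuming $k$ infinite), and I would try to deduce $\bar J=\bar I$ from the reduction property together with the integral closedness of ideals in the DVR $\bar R$. The idea is that $\bar J$ is a reduction of $\bar I$, and in a DVR any reduction of a principal ideal equals it.
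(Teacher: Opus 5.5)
Your proof is correct, provided the reduction argument you list as a fallback is what you actually use to get $\bar J=\bar I$. The dimension count over $\{W_x\}$ is unfinished and not needed. Also, $W_x$ is in fact a hyperplane, since $I/(\mf I+(I\cap xR))\cong \bar I/\mf\bar I\cong k$; your reformulation tacitly uses this.

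The core mechanism is the paper's: pass to the DVR $R/xR$ and write $I$ as a generator of $\bar I$ plus $x(I:x)$. You differ in two useful ways.

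\emph{Choice of generator.} The paper takes the generator $f$ in $I$ and normalizes $J=(f+xx_1',xx_2',\dots,xx_l')$. Because $f$ need not lie in $J$, it needs the identity $fx_i'=(f+xx_1')x_i'-(xx_i')x_1'$ to push $fJ:x$ into $J(I:x)$. Its equality $f(J:x)=f(x_2',\dots,x_l')$ also overlooks $f+xx_1'\in J:x$, harmlessly, since $f\in I\subseteq I:x$. You take $g_1\in J$, so $JI=g_1J+xJ(I:x)$, and $g_1J:x=g_1(J:x)\subseteq J(I:x)$ follows at once. Your algebra works verbatim for any $J\subseteq I$ with $\bar J=\bar I$.

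\emph{Genericity.} The paper's ``applying generic $k$-linear transformations'' is a condition that depends on $x$, whereas the statement asks for one generic $J$ that works for all $x\in\mf\setminus\mf^2$. Your argument runs as follows: $l\geqslant2$ general combinations generate a reduction of the $\mf$-primary ideal $I$ (Northcott--Rees, $k$ infinite); the relation $JI^r=I^{r+1}$ descends to $R/xR$; and in a DVR a reduction of a nonzero principal ideal equals it. This gives $\bar J=\bar I$ for every $x$ simultaneously, and shows that genericity and $l\geqslant2$ enter only through $J$ being a reduction of $I$.

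That formulation is also exactly what the proof of Theorem~\ref{thm:DI} uses. There the lemma is applied to the integral closure of $I$, with $J$ built from generators of $I$. Such a $J$ is a reduction of the integral closure, but it is not literally a set of generic combinations of the integral closure's generators.

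In exchange, the paper's route gives an explicit normal form for $I$ and $J$ modulo $x$ without any reduction theory. That is enough when only finitely many $x$ are involved and $J$ is built from generators of the ideal itself.
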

\begin{proof}
Since $x\notin \mathfrak{m}^2$, the quotient $R/(x)$ is a discrete valuation ring of rank one. Hence, there exists $f\in I$ satisfying $(I+(x))/(x)=(f,x)/(x)$. We may then assume $I=(f,xx_1,\cdots,xx_m)$ with $I:x=(x_1,\cdots,x_m)$. Applying generic $k$-linear transformations, we can write $J=(f+xx_1',xx_2'\cdots,xx_l')$, where $x_1',\cdots,x_l'\in(x_1,\cdots,x_m)$. Notice that 
$$fx_i'=(f+xx_1')x_i'-(xx_i')x_1'\in J\cdot(x_1,\cdots,x_m)$$
for $2\leqslant i \leqslant l$, which indicates $$fJ:x=f(J:x)=f(x_2',\cdots,x_l')\subset J\cdot(x_1,\cdots,x_m).$$
Finally, we have 
\begin{align*}
JI:x
& =(fJ+xJ\cdot(x_1,\cdots,x_m)):x\\
& =fJ:x+J\cdot(x_1,\cdots,x_m) \\
& =J\cdot(x_1,\cdots,x_m) \\
& =J(I:x).
\end{align*}
\end{proof}

\begin{thm}\label{thm:DI}
Set $R=k[s_1,s_2]_{(s_1,s_2)}$ with the maximal ideal $\mf=(s_1,s_2)$. let $I$ be a $\mathfrak{m}$-primary monomial ideal and $J$ be an ideal generated by two generic $k$-linear combinations of generators of $I$. Then $I\cdot \partial \bar{I}=J\cdot \partial\bar {I}$.
\end{thm}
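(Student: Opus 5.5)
The inclusion $J\cdot\partial\bar I\subset I\cdot\partial\bar I$ is immediate from $J\subset I$, so the plan is to prove the reverse inclusion $I\cdot\partial\bar I\subset J\cdot\partial\bar I$. I would first rewrite $\partial\bar I$ in colon form. Since $\bar I$ is a monomial ideal and $\mathrm{char}(k)=0$, the derivative $\partial(s^a)/\partial s_i$ of a monomial $s^a\in\bar I$ is a nonzero scalar times $s^{a-e_i}$ whenever $a_i>0$. A monomial $m$ satisfies $s_im\in\bar I$ exactly when $m=s^{a-e_i}$ for some monomial $s^a\in\bar I$ with $a_i>0$. This gives
$$\partial\bar I=\bar I+(\bar I:s_1)+(\bar I:s_2)=(\bar I:s_1)+(\bar I:s_2).$$
It therefore suffices to show, for $x=s_1$ and $x=s_2$,
$$I(\bar I:x)\subset \bar I(\bar I:x)=J(\bar I:x).$$

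To prove this equality I would combine two ingredients. The first is the reduction-number-one property of integrally closed ideals in a two-dimensional regular local ring (Lipman--Teissier, see \cite{HS06}): $\bar I^2=J'\bar I$ for any minimal reduction $J'$ of $\bar I$. Two generic $k$-linear combinations of generators of $I$ form a minimal reduction of $I$, hence of $\bar I$, so $\bar I^2=J\bar I$. Then, for $y\in\bar I(\bar I:x)$, we have $xy\in\bar I^2=J\bar I$, so
$$\bar I(\bar I:x)\subset J\bar I:x.$$
The second ingredient is a version of Lemma \ref{lem:DI} for the pair $(\bar I,J)$, giving $J\bar I:x=J(\bar I:x)$. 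The inclusion $J(\bar I:x)\subset\bar I(\bar I:x)$ holds because $J\subset\bar I$, and together these give $\bar I(\bar I:x)=J(\bar I:x)$.

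The main obstacle is that Lemma \ref{lem:DI} is stated for $J$ generated by generic combinations of the generators of the ideal itself, while here $J$ is built from generators of $I$ rather than of $\bar I$. I would rerun its proof with $\bar I$ in place of $I$, using the monomial structure. Take $x=s_1$. The image of $\bar I$ in $R/(s_1)$ is generated by $s_2^b$, where $s_2^b$ is the smallest pure power of $s_2$ in $\bar I$. The pure powers of $s_2$ in $I$ and in $\bar I$ coincide, since they are vertices of the Newton polygon, so $f=s_2^b$ is already a generator of $I$. Every other generator of $I$ lies in $(f)+s_1(I:s_1)\subset(f)+s_1(\bar I:s_1)$.

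A generic change of the two combinations then puts $J$ in the form $(f+xx_1',xx_2')$ with $x_1',x_2'\in\bar I:x$. The computation in the proof of Lemma \ref{lem:DI} goes through verbatim with $\bar I:x=(x_1,\dots,x_m)$: one gets $fJ:x\subset J(\bar I:x)$ and hence $J\bar I:x=J(\bar I:x)$. The case $x=s_2$ is symmetric. I expect the delicate point to be checking that genericity is preserved, namely that the combinations used for the reduction property and those used in the normal form of the lemma can be taken to be the same two generic elements. I would handle this by noting that each requirement is a nonempty Zariski-open condition on the coefficients, so a common generic choice exists.
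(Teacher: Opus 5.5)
Your proof is correct and is essentially the paper's argument. Both reduce to $\partial\bar I=(\bar I:s_1)+(\bar I:s_2)$ and obtain $\bar I(\bar I:x)=J(\bar I:x)$ by combining the reduction-number-one equality $\bar I^2=J\bar I$ with Lemma~\ref{lem:DI}. You differ only in two places: you use the trivial inclusion $\bar I(\bar I:x)\subset\bar I^2:x$ where the paper invokes Lemma~\ref{lem:DI} a second time, and you explicitly check that Lemma~\ref{lem:DI} still applies when $J$ is built from generators of $I$ rather than of $\bar I$ (because $I$ and $\bar I$ have the same image modulo $x$), a point the paper leaves implicit.
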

\begin{proof}
 For any $x\in\mathfrak{m}\backslash \mathfrak{m}^2$, we have $$\bar{I}(\bar{I}:x)=(\bar{I})^2:x=J\bar{I}:x=J(\bar{I}:x),$$ where the first and third equalities follow from Lemma \ref{lem:DI} and the second follows from \cite[Corollary 13.3.5]{HS06}. For any monomial ideal $K$, we have $\partial K=K:s_1+K:s_2$ since $\mathrm{char} (k)=0$. In addition, it is well-known that the integral closure of a monomial ideal is still a monomial ideal, c.f. \cite[Proposition 1.4.6]{HS06}. Therefore, $$I\cdot \partial\bar{I}=I(\bar{I}:s_1+\bar{I}:s_2)=J(\bar{I}:s_1+\bar{I}:s_2)=J\cdot \partial\bar{I}.$$
\end{proof}
\begin{rmk} From \cite[Theorem 18.4.3]{HS06}, it follows that $\partial \bar{I}\subset \adj(I)$ always holds for any $\mathfrak{m}$-primary monomial ideal $I$. Furthermore, $\partial \bar{I}\varsubsetneq \adj(I)$ if and only if there exists $(n_1,n_2)\in \ZZ^2$ such that $(n_1+1,n_2+1)\in NP^{\circ}(\bar{I})$ while $(n_1+1,n_2),(n_1,n_2+1)\notin NP(\bar{I})$.
\end{rmk}

\medskip
Our results resolve the main conjecture of \cite{ZSCC03}.
\begin{cor}
Let $n=2$. Suppose that all base points are locally of the forms 
$(s_1,s_2)^{n_1}$ or $(s_1^{n_2},s_2^{n_3})$ for some $n_1,n_2,n_3\in \ZZ_{>0}$, which are called $n_1$-ple base point or $n_2\times n_3$-ple base point, respectively.
Then the coefficient ideal can be taken to be $(\partial I)^{sat}$.
\end{cor}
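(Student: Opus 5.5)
The corollary asks that $J=(\partial I)^{sat}$ satisfy conditions \ding{172}--\ding{174}. Condition \ding{174} holds by construction, and \ding{175}, \ding{176} then follow from Proposition \ref{prop:lci}. Since saturation does not change localizations at base points, \ding{172} and \ding{173} may be checked locally at each base point $\pf$, in the two-dimensional regular local ring $R_{\pf}$. Because $\Xr$ is smooth at $\pf$, we may choose local coordinates $s_1,s_2$ in which $I_{\pf}$ is $(s_1,s_2)^{n_1}$ or $(s_1^{n_2},s_2^{n_3})$. Taking partial derivatives commutes, up to multiplication by units (the Jacobian of the coordinate change), with passing to local coordinates and with localization, provided we include $I$ itself in $\partial I$ as the definition does. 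So we will verify
$$(\partial I)_{\pf}=\partial(I_{\pf}) \quad\text{and}\quad I_{\pf}\subset \partial(I_{\pf}).$$
Here the derivative of the homogeneous $f_i$ must be read in an affine chart around $\pf$. I would dehomogenize using the torus chart containing $\pf$, and note that the Euler-type relations among the $\partial f_i/\partial x_j$ do not affect the localized ideal. Condition \ding{173} is then immediate.

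For \ding{172}, both local forms are monomial ideals. The ideal $(s_1,s_2)^{n_1}$ is integrally closed. The ideal $(s_1^{n_2},s_2^{n_3})$ need not be, but it is a complete intersection, hence already l.c.i. In that case \ding{172} holds for any $J\supset I$: take $f_1',f_2'$ to be the two generators, so that $IJ=(f_1',f_2')J$.

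For $I=(s_1,s_2)^{n_1}$, we have $\bar I=I$, so $\partial\bar I=\partial I=(s_1,s_2)^{n_1-1}$ (this contains $I$). Theorem \ref{thm:DI} then gives $I\cdot\partial I=J'\cdot\partial I$, where $J'$ is generated by two generic $k$-linear combinations $f_1',f_2'$ of the generators of $I$. This is exactly \ding{172} for $J=\partial I$. To transfer the statement from the polynomial-local setting of Theorem \ref{thm:DI} to $R_{\pf}$, use that $R_{\pf}$ is a localization of $k[s_1,s_2]$ at the maximal ideal of the base point, after the coordinate identification.

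The one delicate point is that the generic combinations must be taken from elements of $I_{\pf}$. That is fine, since \ding{172} only requires the $f_i'$ to lie in $I_{\pf}$, not in the span of the global $f_i$.

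Combining over all base points, $J=(\partial I)^{sat}$ satisfies \ding{172}, \ding{173}, and \ding{174}, and hence is a coefficient ideal. The main obstacle I expect is the first step: checking carefully that the homogeneous derivative ideal, after localization at $\pf$ and after a change to local coordinates bringing $I_{\pf}$ to monomial form, coincides with the local derivative ideal. The derivative ideal is not obviously invariant under nonlinear coordinate changes. I would handle this by using the chain rule together with the fact that $I$ is included in $\partial I$: the chain-rule correction terms lie in $I$ times the derivatives of the coordinate change, so the ideal $I+(\partial f_i/\partial s_j)$ is invariant.
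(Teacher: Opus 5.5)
Your proposal is correct and follows essentially the paper's argument: \ding{173} and \ding{174} are immediate, \ding{172} is automatic at the complete-intersection points $(s_1^{n_2},s_2^{n_3})$, and at the $n_1$-ple points it comes from Theorem~\ref{thm:DI} with $\partial\bar I=\partial I=(s_1,s_2)^{n_1-1}$. The paper also offers the direct identity $(s_1,s_2)^{n_1}(s_1,s_2)^{n_1-1}=(s_1^{n_1},s_2^{n_1})(s_1,s_2)^{n_1-1}$ as an alternative to Theorem~\ref{thm:DI}, and it treats as obvious the compatibility of the homogeneous derivative ideal with localization at base points, which you verify via the chain rule and the Euler relations.
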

\begin{proof}
The ideal $(\partial I)^{sat}$ satisfies conditions \ding{173} and \ding{174} obviously. All $n_2\times n_3$-ple base points are l.c.i. The condition \ding{172} for $n_3$-ple base points can be derived from 
Theorem \ref{thm:DI} or
$$(s_1,s_2)^{n_1}\cdot (s_1,s_2)^{n_1-1}=(s_1^{n_1},s_2^{n_1})\cdot (s_1,s_2)^{n_1-1}.$$
\end{proof}

\subsection{Toric embeddings induced by monomial supports}
There is some research focusing on implicitization via toric embbedings induced by monomial supports. Our method can be viewed as a generalization of these approaches. Take the following result as an example. 
\begin{prop}{\cite[Theorem 3.8]{BD16}}
Let $\Xr=\mathbb{P}^1\times \mathbb{P}^1$ and $R=k[s_0,s_1;t_0,t_1]$. Assume $I$ has at most l.c.i.\ base points. Suppose that the Newton polygon $NP=NP(f_0,\cdots,f_3)$ is the following region, which is the convex hull of $(\gamma_0,0),(\gamma_1,0),(0,\gamma_2),(\gamma_1,\gamma_2)$ for $\gamma_0,\gamma_1,\gamma_2\in \ZZ_{>0}$ with $\gamma_1>\gamma_0$.
\begin{figure}[htbp]
    \centering
    \includegraphics[width=0.3\linewidth]{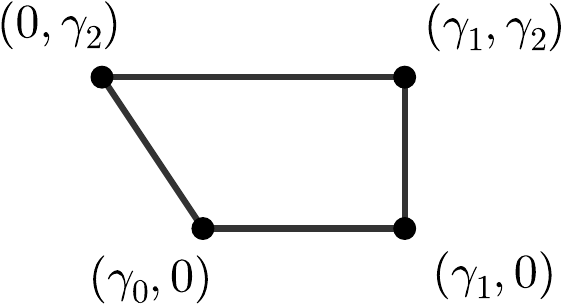}
\end{figure}
\\Then the linear matrix with respect to the monomials supported in the convex hull of $(\gamma_0-1,0),(2\gamma_1-1,0),(0,\gamma_2-1),(2\gamma_1-1,\gamma_2-1)$
is an implicitization matrix.
\end{prop}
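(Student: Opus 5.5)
The plan is to recover this result from Theorem \ref{thm:MPM} by choosing a suitable monomial coefficient ideal $J$. The row monomials of the matrix should then be exactly a $k$-basis of $J_{\mu}$ for $\mu=(2\gamma_1-1,\gamma_2-1)$.

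\textbf{The base locus.} The polygon $NP$ omits the lower-left triangle with vertices $(0,0),(\gamma_0,0),(0,\gamma_2)$. So every $f_i$ vanishes at the torus-fixed point $P$ corresponding to that corner of $NP$. In the affine chart at $P$, with local coordinates $s,t$, the ideal $I_P$ contains the monomial ideal $I^{\mathrm{mon}}_P$ spanned by the lattice points of $NP$, i.e.\ those $(a,b)$ with $a/\gamma_0+b/\gamma_2\geqslant 1$. I expect this point to be the only base point that is not l.c.i. Any other base points are l.c.i.\ by hypothesis.

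\textbf{Choice of $J$.} Let $J$ be the saturated monomial ideal which is trivial away from $P$ and whose localization at $P$ is
\[
J_P=\partial\overline{I^{\mathrm{mon}}_P}=\overline{I^{\mathrm{mon}}_P}:s+\overline{I^{\mathrm{mon}}_P}:t .
\]
Here $\overline{I^{\mathrm{mon}}_P}$ is spanned by the lattice points with $a/\gamma_0+b/\gamma_2\geqslant1$. Its colon ideals are then spanned by the lattice points on or above the segment from $(\gamma_0-1,0)$ to $(0,\gamma_2-1)$. Homogenizing in $R=k[s_0,s_1;t_0,t_1]$, the piece $J_{(2\gamma_1-1,\gamma_2-1)}$ is the span of the monomials in the stated convex hull. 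This identification of $J_\mu$ with the row monomials is a direct check on exponents.

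\textbf{Verifying the conditions on $J$.} Condition \ding{174} holds by construction. For \ding{173}, the local ideal $I_P$ is generated by elements whose monomial supports lie in $NP\subset J_P$, so $I\subset J$. For \ding{172} at $P$ I will use Theorem \ref{thm:DI}: two generic combinations $f_1',f_2'$ satisfy $I\cdot\partial\bar I=(f_1',f_2')\,\partial\bar I$. Theorem \ref{thm:DI} is stated for monomial $I$, but $I_P$ need not be monomial. To bridge this I would argue that $I_P$ and $I^{\mathrm{mon}}_P$ have the same integral closure, since the generic Newton polygon determines it. Then \cite[Corollary 13.3.5]{HS06} together with Lemma \ref{lem:DI}, applied as in the proof of Theorem \ref{thm:DI}, should give the equality for $I_P$ itself. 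At the l.c.i.\ base points, $J$ is locally $R$ and \ding{172} is just the l.c.i.\ property. Proposition \ref{prop:lci} then yields \ding{175} and \ding{176}.

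\textbf{Applying Theorem \ref{thm:MPM}.} Take $\mu=(2\gamma_1-1,\gamma_2-1)$.
\begin{itemize}
\item Since $\gamma_1\geqslant\gamma_2$ in the effective bidegree, Example \ref{ex:region} gives $\mu\notin\Rf$.
\item Condition \eqref{eq:sum} holds by Proposition \ref{prop:ast}, because $R_{\mu-\gamma}=R_{(\gamma_1-1,-1)}=0$.
\item $J_\mu\neq0$ is clear.
\item It remains to show $H_0(\mathbf f;R)_{\mu+\gamma}=0$, i.e.\ $I_{(3\gamma_1-1,2\gamma_2-1)}=R_{(3\gamma_1-1,2\gamma_2-1)}$.
\end{itemize}
This last step is the main obstacle. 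It is a regularity statement, and it fails in general if $I$ is not saturated. I would try to replace the hypothesis by its local-cohomology version, namely the vanishing of $H^0_B(H_0)_{\mu+\gamma}$ and $H^2_B(J)_\mu$ that is actually used in the proof of Theorem \ref{thm:MQM}. I would then verify that vanishing through Lemma \ref{lem:HZ} iv) and Lemma \ref{lem:HJ} ii), computing $H^1_B(R/J)_\mu$ directly from the monomial structure of $J$.

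Granting this, Theorem \ref{thm:MPM} shows that the linear matrix $\MM_\mu$ presents $(\Sc_I(J))_\mu$ with determinant $H^{\deg(\phi)}$, up to a nonzero constant since \ding{172} holds.
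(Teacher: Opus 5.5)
Your overall frame matches the paper's: Theorem \ref{thm:MPM} with a monomial coefficient ideal supported at the corner point $P$, $\mu=(2\gamma_1-1,\gamma_2-1)$, and \eqref{eq:sum} from Proposition \ref{prop:ast}. However, your choice of $J$ does not produce the stated matrix, and this comes from misreading the hypothesis.

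``At most l.c.i.\ base points'' means every base point, $P$ included, is l.c.i. Then \ding{172} holds for \emph{any} $J$: if $I_P=(g_1,g_2)$, then $I_PJ_P=(g_1,g_2)J_P$. The detour through Theorem \ref{thm:DI} is therefore unnecessary. Its bridge $\overline{I_P}=\overline{I_P^{\mathrm{mon}}}$ is also a nondegeneracy condition on the coefficients that the hypotheses do not give. Note too that only $I_P\subseteq I_P^{\mathrm{mon}}$ holds, not the reverse inclusion you first state.

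More importantly, $\partial\overline{I_P^{\mathrm{mon}}}$ is in general strictly larger than the ideal of lattice points on or above the segment from $(\gamma_0-1,0)$ to $(0,\gamma_2-1)$. The two colon ideals together consist of the $s^at^b$ with $a/\gamma_0+b/\gamma_2\geqslant 1-1/\min(\gamma_0,\gamma_2)$. This boundary line passes through $(0,\gamma_2-1)$, but for $\gamma_0>\gamma_2$ it meets the $a$-axis at $\gamma_0-\gamma_0/\gamma_2<\gamma_0-1$. For instance, $\gamma_0=5$, $\gamma_2=2$ gives $\overline{I_P^{\mathrm{mon}}}=(s^5,s^3t,t^2)$ and $\partial\overline{I_P^{\mathrm{mon}}}=(s^3,t)$, whereas the segment ideal is $(s^4,t)$. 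Then $s_0^3s_1^{2\gamma_1-4}t_1^{\gamma_2-1}$ is a row of your matrix but lies outside the stated convex hull. So your ``direct check on exponents'' fails, and you would be proving the claim for a different, larger matrix.

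The paper instead takes $J$ to be generated directly by the $s_0^{n_1}t_0^{n_2}$ with $n_1/(\gamma_0-1)+n_2/(\gamma_2-1)\geqslant1$. Then:
\ding{173} holds because every point of $NP$ satisfies this inequality;
\ding{174} holds because $J$ involves only $s_0,t_0$, so $s_1t_1\in B$ is a nonzerodivisor on $R/J$;
\ding{172} holds by the l.c.i.\ hypothesis;
\ding{175} and \ding{176} follow from Proposition \ref{prop:lci}.

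The step you leave as ``granting this'' is unresolved in your write-up, although the paper already contains what is needed. The literal hypothesis $H_0(\mathbf f;R)_{\mu+\gamma}=0$ of Theorem \ref{thm:MPM} is false here: $s_1^{3\gamma_1-1}t_1^{2\gamma_2-1}\notin I$, since it does not vanish at $P$. So that hypothesis must be read as a misprint; the proof of Theorem \ref{thm:MPM} uses only Lemma \ref{lem:A} ii), Theorem \ref{thm:MQM} and Lemma \ref{lem:H2Z2}.

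Lemma \ref{lem:H2Z2} is exactly your missing ingredient. For $\mu\notin\Rf$ satisfying \eqref{eq:sum}, which you already have, it gives $H^2_B(\Zc_i)_\mu=0$ for $i=0,1,2$. From this you get $H^2_B(J)_\mu=0$. Then $H^0_B(H_0)_{\mu+\gamma}=H^2_B(Z_1)_{\mu+\gamma}=0$ by Lemma \ref{lem:HZ} iv), and $H^0_B(\Sc_I)_\mu=0$ by Lemma \ref{lem:A} ii). The same lemma applied with $J=R$ gives $H^0_B(H_0(\mathbf f;R))_{\mu+\gamma}=0$. This is why the paper's proof cites only Example \ref{ex:region}, Theorem \ref{thm:MPM} and Proposition \ref{prop:ast}, and why no separate computation of $H^1_B(R/J)_\mu$ is needed.

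Finally, $\gamma_1\geqslant\gamma_2$ is not a hypothesis, but you do not need it. On $\PP^1\times\PP^1$ one has $\Rf=\{a\leqslant\gamma_1-2,\ b\geqslant\gamma_2\}\cup\{a\geqslant\gamma_1,\ b\leqslant\gamma_2-2\}\cup\{a\leqslant2\gamma_1-2,\ b\leqslant2\gamma_2-2\}$, which does not contain $\mu$.
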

\begin{proof}
Firstly, we note that $\pf=(0:1,0:1)$ is a base point of $I$ and $I_{\pf}\subset \langle s_0^{n_1}t_0^{n_2}\mid (n_1,n_2)\in NP  \rangle$. Moreover, we have
$$I_{\pf}\subset J:=\langle s_0^{n_1}t_0^{n_2}\mid n_1,n_2\geqslant0,\frac{n_1}{\gamma_0-1}+\frac{n_2}{\gamma_2-1}\geqslant1  \rangle,$$
which corresponds to the integral points lying above the line segment connecting $(\gamma_0-1,0)$ and $(0,\gamma_2-1)$. The monomials supported in the convex hull of $(\gamma_0-1,0),(2\gamma_1-1,0),(0,\gamma_2-1),(2\gamma_1-1,\gamma_2-1)$ precisely correspond to a basis of $J_{(2\gamma_1-1,\gamma_2-1)}$. Finally, the conclusion follows from Example \ref{ex:region}, Theorem \ref{thm:MPM} and Proposition \ref{prop:ast}.
\end{proof}

In fact, this method performs particularly well for base points at the corner. When $\Xr=\PP^1\times \PP^1$, these points are $(0,1;0,1)$, $(0,1;1,0)$, $(1,0;0,1)$, $(1,0;1,0)$. Beyond this specific case, the general method of coefficient ideals demonstrates flexibility, both in handling base points in arbitrary positions and in allowing for the choice of $J_{\pf}$ that is not necessarily generated by monomials.

\section{Sizes of implicitization matrices}\label{sec:size}
In this section, we study the sizes of the implicitization matrices. A criterion is given for $\MM_{\mu}$ to be square, together with a formula for the number of linear columns. Throughout this section, we assume that $J$ satisfies conditions \ding{173},\ding{174},\ding{175},\ding{176}.

Define $l_{\mu}=\HF(Q_1,\mu),\ q_{\mu}=\HF(Q_2/Q_1,\mu),\   (b_i)_{\mu}=\rank_S((\Zc_{i})_{\mu})\ \text{for}\ i\geqslant2$. The following equation follows from the Euler characteristic of the resolution \eqref{eq:MQ}:
\begin{equation}\label{eq:Euler}
l_{\mu}+q_{\mu}-\HF(J,\mu)=(b_2)_{\mu}+\sum_{i\geqslant3}(-1)^i(b_i)_{\mu}.
\end{equation}

\subsection{Square matrix representations}
It has been extensively studied in \cite{Bot11,BC21} that, in certain specially selected degrees, the free resolution \eqref{eq:MQ} has length at most two. Furthermore, let $f_0',f_1',\cdots,f_n'$ be $n+1$ $k$-linear combinations of $f_0,\cdots,f_{n+1}$. The relations between length-one resolutions, which is equivalent to $\MM_{\mu}$ being square, and the presence of linear syzygies with respect to $f_0',f_1',\cdots,f_n'$ are also studied in \cite{AHW05,BC21,CGZ00}. We prove similar results in our setting.

\begin{prop}\label{prop:Zi}
If $R_{\mu-\gamma}=0$, then the following properties hold:
	\begin{itemize}
		\item[i)] $(\Zc_p)_{\mu}=0$ for $p\geqslant3$. 
		\item[ii)] $\mathcal{Z}_2(\mathbf{f} ;J)_{\mu}\subset \Zc_1(\mathbf{f'};J)_{\mu}$.
 	\end{itemize}	
\end{prop}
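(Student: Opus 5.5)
For i), I will unwind the definition $\Zc_p=Z_p(p\gamma)\otimes_k S(-p)$. In degree $\mu$ this gives $(\Zc_p)_{\mu}=Z_p(\mathbf{f};J)_{\mu+p\gamma}\otimes_k S(-p)$. Here $Z_p(\mathbf{f};J)\subset K_p(\mathbf{f};J)=\bigwedge^p J(-\gamma)^{n+2}$, so $Z_p(\mathbf{f};J)_{\mu+p\gamma}$ sits inside a direct sum of copies of $J_{\mu+(p-1)\gamma}$. For $p\geqslant3$ I will instead use Lemma \ref{lem:HH} ii), which says $Z_p(\mathbf{f};J)=B_p(\mathbf{f};R)$. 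Every element of $B_p$ is a boundary of $K_{p+1}$, whose coefficients lie in degree $\mu+p\gamma-(p+1)\gamma=\mu-\gamma$ in each component. The boundary map multiplies these coefficients by the $f_i$, so the entries of an element of $B_p(\mathbf{f};R)_{\mu+p\gamma}$ are $R$-combinations, with coefficients in $R_{\mu-\gamma}$, of the $f_i$. Since $R_{\mu-\gamma}=0$, the module $B_p(\mathbf{f};R)_{\mu+p\gamma}$ vanishes, and therefore $(\Zc_p)_{\mu}=0$ for $p\geqslant3$.

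For ii), write $\mathbf{f'}=(f_0',\dots,f_n')$ with $f_j'=\sum_i c_{ji}f_i$. After a generic linear change of coordinates on $S$ and on the Koszul basis $e_0,\dots,e_{n+1}$, I may assume $f_i'=f_i$ for $i\leqslant n$; the remaining generator is then $f_{n+1}$. A $2$-cycle $z\in Z_2(\mathbf{f};J)_{\mu+2\gamma}$ is written in the basis $e_i\wedge e_j$ with coefficients $a_{ij}\in J_{\mu}$. I split it as $z=z'+w\wedge e_{n+1}$, where $z'$ involves only $e_i\wedge e_j$ with $i,j\leqslant n$ and $w=\sum_{i\leqslant n}a_{i,n+1}e_i$.

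The key computation is $\partial z=\partial z'+\partial(w)\,e_{n+1}-f_{n+1}w=0$. I will compare the coefficients on $e_{n+1}$, which forces $\partial(w)=\sum_{i\leqslant n}a_{i,n+1}f_i=0$. So $w$ is a linear syzygy of $\mathbf{f'}$ with coefficients in $J_{\mu}$, i.e. $w\in Z_1(\mathbf{f'};J)_{\mu+\gamma}$. Next I will write out how $z$ maps to $\Zc_1$ under $\partial_2$ of the approximation complex (replace $f$'s by $x$'s). This gives $z\mapsto \sum a_{ij}(x_ie_j-x_je_i)$, which is a sum of two pieces. The first piece is the image of $z'$, an $S$-combination of elements of $Z_1$ built from the rows of $z'$. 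The second piece is $x_{n+1}w$ minus terms of the form $\sum_i a_{i,n+1}x_i e_{n+1}$.

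The main obstacle is showing that the rows of $z'$ and the $e_{n+1}$-part actually lie in $\Zc_1(\mathbf{f'};J)$. This is where $R_{\mu-\gamma}=0$ has to enter again. The $e_{n+1}$-coordinate of a cycle in $Z_1(\mathbf{f};J)_{\mu+\gamma}$ is constrained, and the cleanest route is to identify the map: I would interpret $\mathcal{Z}_2(\mathbf{f};J)_{\mu}\subset\Zc_1(\mathbf{f'};J)_{\mu}$ via the identification $(\Zc_2)_{\mu}\cong Z_1(\mathbf{f'};J)_{\mu+\gamma}\otimes S(-2)$ given by $z\mapsto w$. Injectivity of $z\mapsto w$ then needs checking. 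If $w=0$, then $z=z'$ is a $2$-cycle of $f_0,\dots,f_n$, and these form a regular-like sequence after the generic choice (depth $n$), so $H_2(\mathbf{f'};R)=0$. Hence $z'$ is a boundary from $K_3$, with coefficients in $R_{\mu-\gamma}=0$, so $z'=0$. This yields the embedding $Z_2(\mathbf{f};J)_{\mu+2\gamma}\hookrightarrow Z_1(\mathbf{f'};J)_{\mu+\gamma}$, and tensoring with $S$ gives the claimed inclusion. I expect the main difficulty to be precisely which ambient inclusion the authors intend, and checking that the injection respects the $S$-structure.
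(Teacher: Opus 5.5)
This is correct and essentially the paper's proof: part i) is the same degree argument via Lemma \ref{lem:HH} ii), and in part ii) your map $z\mapsto w$ is exactly the connecting homomorphism $H_2\to H_1'(-\gamma)$ of the Koszul long exact sequence \eqref{eq:Z2}, which the paper quotes from \cite{BH93} and you write out at chain level. Injectivity then comes, as in the paper, from $H_2(\mathbf{f'};R)=0$ together with $R_{\mu-\gamma}=0$.

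Three small corrections. The detour through $\partial_2$ of the approximation complex is unnecessary. You should say ``embedding'' rather than ``identification'', since by exactness of \eqref{eq:Z2} the image of $z\mapsto w$ is only the kernel of multiplication by $f_{n+1}$ from $(H_1')_{\mu+\gamma}$ to $(H_1')_{\mu+2\gamma}$. Finally, $K_p(\mathbf{f};J)_{\mu+p\gamma}$ has coefficients in $J_{\mu}$, not $J_{\mu+(p-1)\gamma}$, though you never use that remark.
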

\begin{proof} For $p\geqslant3$, we have $Z_p(\mathbf{f};J)=B_p(\mathbf{f};R)$ from Lemma \ref{lem:HH} ii). Moreover, $B_i(\mathbf{f};R)$ is generated by elements in $K_{i+1}(\mathbf{f};R)$. This implies $B_i(\mathbf{f};R)_{\mu+i\gamma}=0$ if $K_{i+1}(\mathbf{f};R)_{\mu+i\gamma}=0$, which is equivalent to $R_{\mu-\gamma}=0.$ This proves i). 

Since $I$ is generated by homogeneous elements, there exists $h\in I_{\gamma}$ such that $I=I'+(h)$. Denote by $H_p'=H_p(\mathbf{f'};J),Z_p'=Z_p(\mathbf{f'};J),B_p'=B_p(\mathbf{f'};J)$. Then we have the following exact sequence (c.f. \cite[Proposition 1.6.12]{BH93}):
\begin{equation}\label{eq:Z2}
 \cdots \longrightarrow H_2'\longrightarrow  H_2 \longrightarrow H_1'(-\gamma) \xrightarrow{\ h\ }  H_1'\longrightarrow \cdots 
\end{equation}
As argued above, $B_2'$, $B_2$ and $B_2(\mathbf{f};R)$ are generated in degree $3\gamma$. Thus, we have ${(B_2')}_{\mu+2\gamma}={(B_2)}_{\mu+2\gamma}={B_2(\mathbf{f};R)}_{\mu+2\gamma}=0$, which indicates ${(H_2')}_{\mu+2\gamma}={(Z_2')}_{\mu+2\gamma}$, ${(H_2)}_{\mu+2\gamma}={(Z_2)}_{\mu+2\gamma}$ and ${(H_2(\mathbf{f};R))}_{\mu+2\gamma}={(Z_2(\mathbf{f};R))}_{\mu+2\gamma}$. Furthermore, we get ${(H_1')}_{\mu+\gamma}={(Z_1')}_{\mu+\gamma}$ for the same reason. Note that $H_2(\mathbf{f'};R)=0$ since $\depth(I',R)=n$. We obtain ${(H_2')}_{\mu+2\gamma}=0$ from $Z_2'\subset Z_2(\mathbf{f}',R)$. Statement ii) now follows from \eqref{eq:Z2}.
\end{proof}

This directly yields the following result on matrix representations. 
\begin{thm}\label{thm:square}
Under the assumptions of Theorem \ref{thm:MQM}, if $R_{\mu-\gamma}=0$, then the free resolution \eqref{eq:MQ} has length at most two. If we further assume $\Syz(\mathbf{f'};J)_{\mu}=0$, then \eqref{eq:MQ} has length one and $\MM_{\mu}$ is square.
\end{thm}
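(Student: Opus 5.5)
Under the hypotheses of Theorem \ref{thm:MQM}, the complex \eqref{eq:MQ} is a minimal graded free resolution of $(\Sc_I/H_B^0(\Sc_I))_{\mu}$. Both claims therefore reduce to showing that certain terms $(\Zc_p)_{\mu}$ vanish. The plan is to read this vanishing off Proposition \ref{prop:Zi} and then use minimality of the resolution.

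First, assume $R_{\mu-\gamma}=0$. Proposition \ref{prop:Zi} i) gives $(\Zc_p)_{\mu}=0$ for all $p\geqslant3$. Hence \eqref{eq:MQ} truncates to
$$0\rightarrow(\Zc_2)_{\mu}\rightarrow(\Zc_1)_{\mu}\oplus(Q_2/Q_1)_{\mu}\xrightarrow{\MM_{\mu}}(\Zc_0)_{\mu},$$
which has length at most two. Exactness is inherited from Theorem \ref{thm:MQM}, since we have only dropped zero modules.

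Second, add the hypothesis $\Syz(\mathbf{f'};J)_{\mu}=0$ and show that $(\Zc_2)_{\mu}=0$. By Proposition \ref{prop:Zi} ii), $\Zc_2(\mathbf{f};J)_{\mu}\subset\Zc_1(\mathbf{f'};J)_{\mu}$. We have $\Zc_1(\mathbf{f'};J)_{\mu}=Z_1(\mathbf{f'};J)_{\mu+\gamma}\otimes_k S(-1)$, and $Z_1(\mathbf{f'};J)_{\mu+\gamma}$ is exactly the space of syzygies of $f_0',\dots,f_n'$ with coefficients in $J_{\mu}$, that is, $\Syz(\mathbf{f'};J)_{\mu}$. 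This vanishes by hypothesis, so $(\Zc_2)_{\mu}=0$.

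The only step needing care is the degree bookkeeping here, namely matching $\Syz(\cdot)_{\mu}$ with $Z_1$ in degree $\mu+\gamma$ through the shift $\Zc_1=Z_1(\gamma)\otimes S(-1)$. With $(\Zc_2)_{\mu}=0$, the resolution becomes $0\rightarrow(\Zc_1)_{\mu}\oplus(Q_2/Q_1)_{\mu}\xrightarrow{\MM_{\mu}}(\Zc_0)_{\mu}$, so $\MM_{\mu}$ is injective and has length one.

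Finally, we show $\MM_{\mu}$ is square. The module $(\Sc_I/H_B^0(\Sc_I))_{\mu}$ has determinant a power of $H$ times a unit or a polynomial, by Theorem \ref{thm:MQM}, so it is a torsion $S$-module; indeed it is annihilated by a power of $H$, as in that proof. An injective map of free modules with torsion cokernel has equal ranks. This can also be checked directly from the Euler characteristic \eqref{eq:Euler}: with $(b_i)_{\mu}=0$ for $i\geqslant2$ it gives $l_{\mu}+q_{\mu}=\HF(J,\mu)$. Hence $\MM_{\mu}$ is square.
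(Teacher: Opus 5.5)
Your proof is correct and is exactly the argument the paper intends: the paper presents Theorem \ref{thm:square} as a direct consequence of Proposition \ref{prop:Zi}. Part i) of that proposition kills $(\Zc_p)_{\mu}$ for $p\geqslant 3$, and part ii), combined with $\Syz(\mathbf{f'};J)_{\mu}=Z_1(\mathbf{f'};J)_{\mu+\gamma}=0$, kills $(\Zc_2)_{\mu}$. Your squareness step, which uses that the cokernel is torsion (equivalently \eqref{eq:Euler} with $(b_i)_{\mu}=0$ for $i\geqslant 2$), fills in the one line the paper leaves implicit.
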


\begin{proof}[Proof for Theorem \ref{thm:det}]
This follows by combining Theorem \ref{thm:MQM}, Theorem \ref{thm:adj} and Theorem \ref{thm:square}.
\end{proof}

If $R_{\mu-\gamma}=0$, then $(b_2)_{\mu}$ is the number of columns minus the number of rows of the quadratic matrix. Let $J_1\subset J_2$ be two ideals. We deduce $\Zc_2(\mathbf{f};J_1)_{\mu}\subset \mathcal{Z}_2(\mathbf{f};J_2)_{\mu}$ from \eqref{eq:Koszul}. This shows that taking coefficient ideals partially achieves our third goal in the following sense: if $\phi$ is l.c.i.\ at the base points, then we can always take $J$ to be some proper ideal of $R$. If $\mu$ still satisfies the required conditions in Theorem \ref{thm:MQM}, then we obtain a matrix representation with fewer rows and a smaller difference between the numbers of columns and rows, compared to the choice $J=R$.

\subsection{Ranks of syzygies}
In the following, we compute the number of linear columns. Moreover, we study the rank of $(Q_k)_{\mu}$ and reveal its relation to $H_B^0(H_0)_{\mu+\gamma}=0$.

Let $(R,\mf,k)$ be a regular local ring. For an $\mf$-primary ideal $I$ in $R$, define $\deg I=\dim_k R/I$ and write $e(I)$ for the Hilbert-Samuel multiplicity. In the graded case, write $\deg I=\sum_{\pf\in V(I)} \deg(I_{\pf})$ and $e(I)=\sum_{\pf\in V(I)} e(I_{\pf})$ when $\dim V(I)=0$. 

\begin{lem}\label{lem:Ik}
Let $I\subset J$ be ideals of a regular local ring $R$, where $I$ is generated by a regular sequence $x_1,\cdots,x_l$. Then $I^k/I^kJ\simeq (R/J)^{\oplus\binom{l+k-1}{k}}$for $k\geqslant1$.  
\end{lem}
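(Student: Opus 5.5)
The plan is to use the standard fact that for an ideal $I=(x_1,\dots,x_l)$ generated by a regular sequence, the associated graded ring $\mathrm{gr}_I(R)=\bigoplus_k I^k/I^{k+1}$ is a polynomial ring $(R/I)[T_1,\dots,T_l]$. Equivalently, $I^k/I^{k+1}$ is a free $R/I$-module with basis the images of the $\binom{l+k-1}{k}$ monomials of degree $k$ in $x_1,\dots,x_l$. This is the classical result of Rees (see e.g.\ \cite{BH93}) and can be quoted. Note that the lemma as stated is really about $I^k/I^kJ$, where the ideal $J\supset I$ replaces $I$ in the denominator.

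Since $I\subset J$, we have $I^{k+1}=I^k I\subset I^kJ$. Hence
\[
I^k/I^kJ \simeq (I^k/I^{k+1})\otimes_R R/J,
\]
because $I^k/I^kJ = I^k\otimes_R R/J$ and $I^k\otimes R/J$ factors through $I^k/I^{k+1}\otimes R/J$: the kernel of $I^k\to I^k/I^{k+1}$ is $I^{k+1}$, and its image in $I^k\otimes R/J$ lies in $JI^k\otimes\dots$, i.e.\ it vanishes. Concretely, $I^k/I^kJ = (I^k/I^{k+1})/J(I^k/I^{k+1})$. Substituting the freeness result,
\[
(I^k/I^{k+1})\otimes_{R/I} R/J \simeq \bigl((R/I)^{\oplus N}\bigr)\otimes_{R/I}R/J = (R/J)^{\oplus N},
\qquad N=\binom{l+k-1}{k},
\]
using that $R/J$ is an $R/I$-module, which again relies on $I\subset J$.

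The only substantive input is the freeness of $I^k/I^{k+1}$ over $R/I$ for a regular sequence. If a self-contained argument is wanted, one shows that the natural surjection $(R/I)[T_1,\dots,T_l]\to \mathrm{gr}_I(R)$ is injective by induction on $l$, using that a form $F$ with $F(x)\in I^{k+1}$ has all its coefficients in $I$. This is the main, though standard, obstacle. Regularity of $R$ is not needed beyond making sure $x_1,\dots,x_l$ is a regular sequence.
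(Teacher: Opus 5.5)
Your proof is correct, but it goes through a different standard fact than the paper does.

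The paper's route: it takes the (Eagon--Northcott) minimal free resolution of $I^k$. The first differential $d_1$ is generated by the Koszul-type relations among the monomial generators, so its entries are $\pm x_i\in I\subset J$. Tensoring the presentation $R^{N_1}\to R^{N_0}\to I^k\to 0$ with $R/J$ therefore gives $\bar d_1=0$, and $I^k/I^kJ=I^k\otimes_R R/J\cong (R/J)^{N_0}$ follows directly.

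Your route: you use Rees's theorem $\mathrm{gr}_I(R)\cong (R/I)[T_1,\dots,T_l]$ to see that $I^k/I^{k+1}$ is free over $R/I$, and then base change along $R/I\to R/J$. The hypothesis $I\subset J$ enters twice. First, it gives $I^{k+1}\subset I^kJ$, so $(I^k/I^{k+1})\otimes_R R/J=I^k/(I^{k+1}+JI^k)=I^k/I^kJ$. Second, it makes $R/J$ an $R/I$-algebra.

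The two inputs are essentially equivalent. $I^k\otimes_R R/I$ is free on the $\binom{l+k-1}{k}$ monomials exactly because the relations among those monomials have coefficients in $I$, which is what the paper reads off from $d_1$. So neither argument is deeper.

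What each route buys:
\begin{itemize}
\item Yours uses only first-syzygy information, packaged through $\mathrm{gr}_I(R)$. It needs neither the full resolution nor minimality, and, as you note, it works for any regular sequence in any Noetherian ring; regularity of $R$ plays no role in either proof.
\item The paper's version makes the role of $I\subset J$ visible as the vanishing of $\bar d_1$.
\end{itemize}

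One small point: your sentence about the image of $I^{k+1}$ lying in $JI^k\otimes\cdots$ is garbled. The clean justification is the right-exactness identity $(I^k/I^{k+1})\otimes_R R/J=I^k/(I^{k+1}+JI^k)$, which is the one you give immediately afterwards.
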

\begin{proof}
Consider the minimal free resolution of $I^k$:
$$
\xymatrix{\cdots \ar[r] & R^{N_1} \ar[r]^{d_1} & R^{N_0}  \ar[r]^{d_0} & I^k \ar[r]& 0    }.
$$
It is well-known that the image of $d_1$ is generated by the Koszul syzygies of $x_1,\cdots,x_l$. In fact, the resolution is induced by the Eagon-Northcott complex with $N_0=\binom{l+k-1}{k},N_1=k\binom{l+k-1}{k+1}$, c.f. \cite[Proposition 2.5]{BE75}. By tensoring the complex with $R/J$, we obtain $I^k\otimes_R (R/J)\simeq (R/J)^{\oplus \binom{k+l-1}{k}}$ if and only if $\bar{d_1}=0$, which is equivalent to $I\subset J$.
\end{proof}

\begin{lem}\label{lem:IkJ}
Let $(R,\mf,k)$ be a regular local ring. Let $I\subset J$ be $\mf$-primary ideals of $R$ satisfying the equality in the condition \ding{172} with $n=\dim R$. We have
$$\deg(I^kJ)=\binom{k+n-1}{k}\deg J+\binom{k+n-1}{k-1}e(I).$$
\end{lem}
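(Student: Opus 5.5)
Write $K=(f_1',\dots,f_n')$ for the ideal given by \ding{172}, so that $IJ=KJ$. The plan is to replace $I$ by $K$, split the length of $R/I^kJ$ along the chain $I^kJ\subset I^k\subset R$, and then compute each piece using Lemma \ref{lem:Ik} together with standard facts about complete intersections.

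First I reduce to $K$. Iterating $IJ=KJ$ gives
$$I^kJ=I^{k-1}(IJ)=I^{k-1}KJ=K I^{k-1}J=\cdots=K^kJ.$$
Since $IJ$ is $\mf$-primary and $IJ=KJ\subset K$, the ideal $K$ is $\mf$-primary. Because $K$ has $n=\dim R$ generators, $f_1',\dots,f_n'$ form a regular sequence, exactly as in the proof of Proposition \ref{prop:lci} ii). Moreover $K\subset I\subset J$.

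The reduction also identifies the multiplicity term. The relation $IJ=KJ$ with $J$ $\mf$-primary (hence containing a nonzerodivisor) shows, by the determinant trick, that $K$ is a reduction of $I$. Therefore $e(I)=e(K)$. Since $K$ is generated by a regular sequence in a Cohen–Macaulay ring, $e(K)=\deg K=\ell(R/K)$.

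Next I split the length along $K^kJ\subset K^k\subset R$:
$$\deg(I^kJ)=\ell(R/K^k)+\ell(K^k/K^kJ).$$
Lemma \ref{lem:Ik}, applied to $K\subset J$, gives $K^k/K^kJ\simeq (R/J)^{\binom{n+k-1}{k}}$, so the second term equals $\binom{k+n-1}{k}\deg J$.

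For the first term I use that $\mathrm{gr}_K(R)\simeq (R/K)[T_1,\dots,T_n]$, since $K$ is generated by a regular sequence. Hence
$$\ell(K^j/K^{j+1})=\binom{j+n-1}{n-1}\deg K,$$
and summing over $j=0,\dots,k-1$ gives
$$\ell(R/K^k)=\binom{k+n-1}{n}\deg K=\binom{k+n-1}{k-1}e(I).$$
Adding the two terms yields the stated formula.

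The main point to check is the identification $e(I)=e(K)=\deg K$. This uses that a reduction preserves multiplicity, and that multiplicity equals colength for a parameter ideal in a Cohen–Macaulay local ring. Both facts are standard, so I expect no real obstacle. The only other ingredient is the telescoping $I^kJ=K^kJ$, which follows immediately from \ding{172}.
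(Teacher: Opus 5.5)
Your proposal is correct and follows the paper's proof. Like the paper, you pass from $I$ to the complete-intersection reduction $K$, split $\ell(R/K^kJ)=\ell(R/K^k)+\ell(K^k/K^kJ)$, apply Lemma \ref{lem:Ik} to the second term, and use $\deg K=e(I)$ for the first. You also supply three steps the paper leaves implicit: the telescoping identity $I^kJ=K^kJ$, the determinant-trick argument that $K$ is a reduction of $I$, and the count $\ell(R/K^k)=\binom{k+n-1}{n}\deg K$ via $\mathrm{gr}_K(R)\simeq (R/K)[T_1,\dots,T_n]$.
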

\begin{proof}
Let $IJ=(f_1',\cdots,f_{n}')J$ and set $K=(f_1',\cdots,f_{n}')$. By comparing the depths, we see that $K$ is a complete intersection and a minimal reduction of $I$. Moreover, it is well-known that $\deg(K)=e(I)$, c.f. \cite[lemma 4.6.5]{BH93}. Hence, by Lemma \ref{lem:Ik}, we obtain
\begin{align*}
\deg(I^kJ) & =\deg(K^kJ) \\
& =\dim_k(K^k/K^kJ)+\deg(K^k)\\
& =\binom{k+n-1}{k}\deg J+\binom{k+n-1}{k-1}e(I).
\end{align*}
\end{proof}

Lemma \ref{lem:IkJ} enables us to compute $\HF(H_0,\mu+\gamma)$ by the Grothendieck-Serre formula. For $\mu\notin \mathscr{A}$ with $H_B^2(J)_{\mu}=0$, we have $H_B^1(H_0)_{\mu+\gamma}=0$ by the spectral sequence in the proof of Lemma \ref{lem:HJ}. Hence,
\begin{align*}
\HF(H_0,\mu+\gamma)+\HF(H_B^0(H_0),\mu+\gamma)& =\HP(H_0,\mu+\gamma)  \\
& =\sum_{\pf\in V(I)}\dim_kJ_{\pf}/I_{\pf}J_{\pf} \\
& =(n-1)\deg J+e(I).
\end{align*}
This shows that $H_B^0(H_0)_{\mu+\gamma}=0$ if and only if 
\begin{equation}\label{eq:H0}
\HF(H_0,\mu+\gamma)=(n-1)\deg J+e(I).
\end{equation}

Now, we are ready to compute the dimensions of the syzygy modules.
\begin{prop}
If $\mu\notin \mathscr{A}$ with $H_B^2(J)_{\mu}=H^0_B(H_0)_{\mu+\gamma}=0$, then $$l_{\mu}=(n+2)\HF(R,\mu) -\HF(R,\mu+\gamma)-2\deg J+e(I).$$
Moreover, we have
\begin{align*}
\HF(Q_k,\mu)& =\binom{n+k+1}{k}\HF(R,\mu)-\HF(R,\mu+k\gamma)\\
&-\frac{k(k+2n+1)}{n(n+1)}\binom{k+n-1}{k}\deg J+\binom{k+n-1}{k-1}e(I).
\end{align*}
$$$$
\end{prop}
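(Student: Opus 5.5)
The plan is to compute everything degree by degree from the defining exact sequences. By definition, $(Q_k)_{\mu}=Q_k(I;J)_{\mu}$ is the kernel of the $k$-linear map
$$J_{\mu}^{\binom{n+k+1}{k}}\longrightarrow R_{\mu+k\gamma},$$
obtained by evaluating the coefficients on the monomials of degree $k$ in $f_0,\dots,f_{n+1}$. The image of this map is $(I^kJ)_{\mu+k\gamma}$. Hence
$$\HF(Q_k,\mu)=\binom{n+k+1}{k}\HF(J,\mu)-\HF(I^kJ,\mu+k\gamma),$$
and the proposition reduces to computing the two Hilbert functions on the right. The first formula, for $l_{\mu}$, is the case $k=1$. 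It can also be checked directly: $(IJ)_{\mu+\gamma}$ is the kernel of $J_{\mu+\gamma}\to (H_0)_{\mu+\gamma}$, where $H_0=J/IJ$, and we may then use \eqref{eq:H0}.

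\textbf{Step 1: $\HF(J,\mu)=\HF(R,\mu)-\deg J$.} Since $\mu\notin\mathscr{A}$, Lemma \ref{lem:HJ} v) (the terms with $j=0$) gives $H_B^p(R)_{\mu}=0$ for all $p\geqslant 2$. Moreover $H^0_B(R/J)=0$ by \ding{174}, and $H_B^1(R/J)_{\mu}=H_B^2(J)_{\mu}=0$ by Lemma \ref{lem:HJ} ii) together with the hypothesis $H_B^2(J)_{\mu}=0$. Finally $H^i_B(R/J)=0$ for $i\geqslant2$, because $R/J$ is supported on points. The Grothendieck--Serre formula then yields $\HF(R/J,\mu)=\HP(R/J,\mu)=\deg J$. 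In the same way, $\HF(R,\mu)$ agrees with what the sheaf computation predicts, since $H^p_B(R)_\mu=0$ for all $p$.

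\textbf{Step 2: $\HF(I^kJ,\mu+k\gamma)=\HF(R,\mu+k\gamma)-\deg(I^kJ)$.} We then expand $\deg(I^kJ)$ with Lemma \ref{lem:IkJ}, which applies because $J$ satisfies \ding{172} locally at each base point. This step is the main obstacle, since we have no direct vanishing of $H^1_B(R/I^kJ)$ in degree $\mu+k\gamma$. We would get it through the Rees module instead, since $(I^kJ)_{\mu+k\gamma}$ is the $t^k$-component of $\Rc_I(J)_{\mu}$.

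First we show that $H^0_B$ and $H^1_B$ of $\Rc_I(J)$ vanish in degree $\mu$.
\begin{itemize}
\item $H^0_B(\Rc_I(J))=0$, because $\Rc_I(J)\subset R[t]$ and $R[t]$ is a domain.
\item In the proof of Theorem \ref{thm:MQM}, the hypotheses give $H_B^2(\Zc_0)_{\mu}=0$ and $H_B^2(\Zc_1)_{\mu}=0$. By Lemma \ref{lem:A} i), this yields $H^1_B(\Sc_I)_{\mu}=0$. Lemma \ref{lem:Rees} ii) then gives $H^1_B(\Rc_I(J))_{\mu}=0$.
\end{itemize}
By \eqref{eq:H01}, the graded pieces of $\Rc_I(J)_{\mu}$ are therefore the global sections of the corresponding sheaf. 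In $t$-degree $k$ this sheaf is the ideal sheaf $\widetilde{I^kJ}(\mu+k\gamma)$.

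Next we apply the sheaf sequence
$$0\to\widetilde{I^kJ}(\mu+k\gamma)\to\mathcal{O}_{\Xr}(\mu+k\gamma)\to\mathcal{O}/\widetilde{I^kJ}\to0,$$
whose last term is a skyscraper of length $\deg(I^kJ)$. For this we need
$$H^1(\Xr,\widetilde{I^kJ}(\mu+k\gamma))=0 \quad\text{and}\quad H^1(\mathcal{O}_{\Xr}(\mu+k\gamma))=H^2_B(R)_{\mu+k\gamma}=0,$$
and the first of these is again the delicate point. The first vanishing would be obtained from $H^2_B(\Rc_I(J))_{\mu}=H^2_B(\Sc_I)_{\mu}$ (Lemma \ref{lem:Rees} i)) and Lemma \ref{lem:A} i); we expect the relevant cokernel of $\delta^2_1$ to be zero by the same vanishings. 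For the second vanishing we first try the $p=2$, $j=k$ condition in $\mathscr{A}$ for small $k$. If that is not available, we work directly with the $k$-th graded strand of $\Rc_I(J)$ and use $H^i_B(\Rc_I(R))_\mu=0$ from Lemma \ref{lem:Rees} iii).

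\textbf{Step 3: combine.} Substituting Steps 1 and 2 gives
$$\HF(Q_k,\mu)=\binom{n+k+1}{k}\HF(R,\mu)-\HF(R,\mu+k\gamma)-\Bigl[\binom{n+k+1}{k}-\binom{k+n-1}{k}\Bigr]\deg J+\binom{k+n-1}{k-1}e(I).$$
A routine binomial identity rewrites the bracket as $\frac{k(k+2n+1)}{n(n+1)}\binom{k+n-1}{k}$. For $k=1$ the bracket equals $n+2-1=n+1$. Combining this with \eqref{eq:H0}, applied via $H_0=J/IJ$, recovers
$$l_{\mu}=(n+2)\HF(R,\mu)-\HF(R,\mu+\gamma)-2\deg J+e(I),$$
since $(n+2)\deg J-\bigl((n-1)\deg J+\deg J\bigr)=2\deg J$.
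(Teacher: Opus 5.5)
Your proof is correct in substance, but it reaches the key vanishing by a different route from the paper. Both arguments reduce to showing that $\HF(R/I^kJ,\mu+k\gamma)$, or equivalently $\HF(J/I^kJ,\mu+k\gamma)$, equals its Hilbert polynomial, and then apply Lemma \ref{lem:IkJ}. As you note, that lemma needs \ding{172}.

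The paper stays inside the syzygy modules. It starts from $H^i_B(Q_1)_\mu=0$ and resolves $\ker\varphi$, the $k$-th strand of $\ker(\Zc_1\to\Zc_0)$, by $\Zc_{\geqslant 2}$. This gives $H^i_B(Q_k)_\mu=0$ for $i\geqslant 2$, which it then pushes through $0\to Q_k\to J^{\binom{n+k+1}{k}}\to J\to J/I^kJ\to 0$.

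You instead use that the $t^k$-strand of $\Rc_I(J)$ in degree $\mu$ is $(I^kJ)_{\mu+k\gamma}$. The vanishing of every $H^i_B(\Rc_I(J))_\mu$ follows at once from Lemma \ref{lem:A} i), Lemma \ref{lem:Rees} i), ii) and torsion-freeness. It gives $H^i_B(I^kJ)_{\mu+k\gamma}=0$ for all $i$ and all $k$ at once, so the $Q_k$ sequence becomes a pure dimension count.

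Your route is shorter and treats all $k$ uniformly. It computes $\HF(I^kJ,\mu+k\gamma)$ directly, so you never need $\HF(R/J,\mu+k\gamma)=\deg J$, which the paper asserts without justification. It even yields $H^i_B(Q_k)_\mu=0$ for every $i$, because $H^\bullet_B(J)_\mu=0$.

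Three points to tighten:
\begin{enumerate}
\item The step you only ``expect'' is immediate. $H^2_B(\Sc_I)_\mu=\coker(\delta^2_1)_\mu$ has target $H^2_B(\Zc_0)_\mu=H^2_B(J)_\mu\otimes_k S=0$. Hence $H^1(\Xr,\widetilde{I^kJ}(\mu+k\gamma))=H^2_B(I^kJ)_{\mu+k\gamma}=0$ by Lemma \ref{lem:Rees} i).
\item You do not need $H^1(\mathcal{O}_{\Xr}(\mu+k\gamma))=0$. Surjectivity of $H^0(\mathcal{O}_{\Xr}(\nu))\to H^0(\mathcal{O}_{\Xr}/\widetilde{I^kJ})$ uses only the vanishing of $H^1$ of the ideal sheaf. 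Also $H^0(\mathcal{O}_{\Xr}(\nu))=R_\nu$ because $H^0_B(R)=H^1_B(R)=0$. Your proposed fallback through $\mathscr{A}$ would not have worked anyway, since $\mathscr{A}$ controls $H^p_B(R)$ only in degrees $\mu-j\gamma$.
\item For $k=1$ the bracket is $\binom{n+2}{1}-\binom{n}{1}=2$, not $n+1$; your own identity gives $\frac{2n+2}{n(n+1)}\cdot n=2$. So the formula for $l_\mu$ is just the case $k=1$. The separate check via \eqref{eq:H0} is redundant, and as written it would also need $\HF(R/J,\mu+\gamma)=\deg J$, which your Step 1 provides only in degree $\mu$.
\end{enumerate}
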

\begin{proof}
First,  we deduce $H_B^1(R/J)_{\mu}=H_B^2(J)_{\mu}=0$ from the choice of $\mu$. Hence, $(R/J)_{\mu}=\deg J$ follows from \eqref{eq:H01}. This also shows $\HF(R/J,\mu+k\gamma)=\deg J$ for $k\geqslant1$. The formula for $l_{\mu}$ then follows from the exact sequence
\begin{equation}\label{eq:Q1}
0\rightarrow (Q_1)_{\mu}\rightarrow J_{\mu}^{\oplus(n+2)} \xrightarrow{I} J_{\mu+\gamma} \rightarrow (J/IJ)_{\mu+\gamma}\rightarrow 0.
\end{equation}

From Lemma \ref{lem:HZ} and the proof of Theorem \ref{thm:MQM}, we have $H_B^i(Q_1)_\mu = 0$ for $i \geqslant 0$.
To study $Q_k$, we consider the following exact sequence:
\begin{equation}\label{eq:Qk-exact}
0\rightarrow \ker\varphi \rightarrow \bigoplus_{|\alpha| = k-1}(Q_1)_\mu\mathbf{x}^\alpha \xrightarrow{\varphi} (Q_k)_\mu \rightarrow (Q_k/Q_1)_\mu \rightarrow 0,
\end{equation}
where $|\alpha| = \alpha_0 + \dots + \alpha_{n+1}$ and $\varphi$ maps 
$x^\alpha(a_0x_0 + \dots + a_{n+1}x_{n+1})$ to $a_0(x_0x^\alpha) + \dots + a_{n+1}(x_{n+1}x^\alpha)$.
Note that $\ker\varphi=\ker(\mathcal{Z}_1 \to \mathcal{Z}_0)_k$ by definition. This leads us to compute the local cohomology of $\ker\varphi$.
By the acyclicity of the approximation complex $(\mathcal{Z}_\bullet)_\mu$, we have the exact sequence
\begin{equation}\label{eq:kerphi-res}
\dots \to (\mathcal{Z}_4)_{\mu,k} \to (\mathcal{Z}_3)_{\mu,k} \to (\mathcal{Z}_2)_{\mu,k} \to \ker\varphi \to 0.
\end{equation}
Since $H_B^i(\mathcal{Z}_p)_\mu = 0$ for $i \geqslant 3$, we obtain $H_B^i(\ker\varphi) = 0$ for $i \geq 3$.
This indicates $H_B^i(Q_k)_\mu = 0$ for $i \geqslant 2$ by \eqref{eq:Qk-exact}.
Hence, $H_B^i(J/I^k J)_{\mu + k\gamma} = 0$ for $i \geqslant 0$ follows from the following exact sequence:
\begin{equation}\label{eq:J/IkJ-exact}
0\to (Q_k)_\mu \to J_\mu^{\oplus\binom{n+k+1}{k}} \xrightarrow{I^k} J_{\mu + k\gamma} \to (J/I^k J)_{\mu + k\gamma} \to 0.
\end{equation}
This implies
\[
\mathrm{HF}(J/I^k J, \mu + k\gamma) = \left(\binom{k+n-1}{k} - 1\right)\deg J + \binom{k+n-1}{k-1} e(I)
\]
by the Grothendieck-Serre formula and Lemma \ref{lem:IkJ}. The desired conclusion now follows from computing the Hilbert function of \eqref{eq:J/IkJ-exact}.
\end{proof}

\medskip
In this section, we compute the value of $l_{\mu}$ and give a criterion for $(b_2)_{\mu}=0$. However, we do not propose a general method for choosing $J$ such that $\Syz(\mathbf{f'};J)_{\mu}=0$, which is the condition required in the criterion. In dimension two, the syzygy module of three polynomials has been studied in many cases, including \cite{BDS21,CS03,HW06,SSV14,Wea25}. We hope future research will resolve this issue.

\section{Examples}\label{sec:ex}
In Section \ref{sec:adj}, we propose several ways to construct the coefficient ideal $J$. We will illustrate them with several examples. All computations are completed using \cite{GS}. 

\medskip
First, let us look at two examples of tensor product surfaces, i.e., $\Xr=\PP^1\times \PP^1$. For simplicity, we denote by $R=\mathbb{C}[s_0,s_1;t_0,t_1]$ the homogeneous coordinate ring.
\begin{ex}
\cite[Example 5.7]{BC21}\ 
Consider the rational map $\phi$ defined by the four $3$-minors of the following matrix:
$$\left(\!\begin{array}{ccc}
s_{0}&0&s_{1}^{2}t_{0}^{2}+s_{0}s_{1}t_{1}^{2}\\
s_{1}&t_{0}-t_{1}&s_{0}s_{1}t_{0}^{2}+s_{0}^{2}t_{1}^{2}\\
s_{0}+s_{1}&t_{0}&s_{1}^{2}t_{0}t_{1}+s_{0}s_{1}t_{1}^{2}\\
0&t_{1}&s_{0}^{2}t_{0}t_{1}+s_{1}^{2}t_{1}^{2}
\end{array}\!\right)$$
The bi-degree is $\gamma=(3,3)$. This map has only l.c.i.\ base points with the total multiplicity $\sum_{\pf\in V(I)} e(I_{\pf})=13$. Thus $\phi$ parametrizes a surface with $\deg(H)=5$ by \eqref{eq:DEG1}. According to \cite[Theorem 4.4]{BC21}, the quadratic matrices with respect to $R$ in bi-degree $(1,2)$ and $(2,1)$ both yield the right determinant. They are both of sizes $6\times 8$ with $7$ linear columns and $1$ quadratic column. However, these matrices are not square. 

We can construct a square matrix using our method. The map $\phi$ has two simple base points: 
$$\pf_1=(s_1,t_1),\pf_2=(s_0+s_1,t_1).$$
According to our setting, condition \ding{172} holds automatically in the l.c.i.\  case, so we only need to take care of condition \ding{173}. We take $J_{\pf_1}=\pf_1,J_{\pf_2}=\pf_2$ and set $$J=J_{\pf_1}\cap J_{\pf_2}=(s_0s_1+s_1^{2},\,s_1\,t_1,\,s_0\,t_1+s_1\,t_1,\,t_1^{2}).$$
A $k$-basis of $J_{(2,1)}$ is then computed as
$$\langle s_0s_1t_0+s_1^2t_0,\
 s_0s_1t_1+s_1^2t_1,\ s_1^2t_1,\
  s_0^2t_1+s_0s_1t_1  \rangle.$$
Now, the quadratic matrix with respect to $J$ in bi-degree $(2,1)$ can be computed using the above basis, which is
$$\left(\!\begin{array}{cccc}
0&0&x_{1}-x_{2}&x_{0}x_{3}-x_{2}x_{3}\\
x_{1}+x_{3}&-x_{1}-x_{2}-2\,x_{3}&-x_{0}+x_{2}&2\,x_{0}^{2}-3\,x_{0}x_{2}-x_{2}^{2}-2\,x_{2}x_{3}\\
-x_{2}-x_{3}&2\,x_{2}+2\,x_{3}&0&-x_{0}^{2}-x_{1}^{2}+2\,x_{0}x_{2}+2\,x_{2}^{2}+2\,x_{2}x_{3}\\
0&x_{1}+x_{3}&0&-x_{0}^{2}+x_{0}x_{2}+x_{2}^{2}+x_{2}x_{3}
\end{array}\!\right)$$
with the correct determinant.
\end{ex}

\begin{ex} Consider the rational map $\phi$ defined by the following polynomials:
\begin{align*}
&f_0=s_0^7t_1^7,\\
&f_1=s_0^6s_1t_0t_1^6,\\
&f_2=s_0^2s_1^5t_0^5t_1^2+2s_0^4s_1^3t_0^6t_1,\\
&f_3=s_1^7t_0^7.
\end{align*}
We have $\gamma=(7,7)$ and $\deg(\phi)=3$. There are two non-a.l.c.i.\ base points $\mathfrak{p}_1=(0,1;0,1),\mathfrak{p}_2=(1,0;1,0)$ with 
$$I_{\mathfrak{p}_1}=(s_0^7,s_0^6t_0,s_0^2t_0^5,t_0^7),\ I_{\mathfrak{p}_2}=(s_1^7,s_1^3t_1+s_1^5t_1^2,s_1t_1^6,t_1^7).$$ Due to the presence of non-a.l.c.i.\ base points, Theorem 4.4 of \cite{BC21} cannot be applied to this example. In fact, the linear matrix in bi-degree $(13,6)$ with respect to $R$ is even not of full rank. However, our Theorem \ref{thm:MPM} does provide the right matrix representations. We use the adjoint ideals. We have $\overline{I_{\pf_1}}=(s_0,t_0)^7$ and $\overline{(s_1^7,s_1t_1^6,t_1^7)}=(s_1,t_1)^7$. This implies $\overline{I_{\pf_2}}=\overline{(s_1,t_1)^7+(s_1^3t_1)}=(s_1^7,s_1^3t_1,s_1^2t_1^3,s_1t_1^5,t_1^7)$. Using Proposition \ref{prop:adj} i), set  $$J_{1}=(s_0,t_0)^6,\ J_{2}=(s_1^3,s_1^2t_1,s_1t_1^3,t_1^5),$$
and $J=J_{1}\cap J_{2}$. The linear matrix in bi-degree $(13,6)$ with respect to $J$ is of size $68\times 115$ with the correct determinant. The quadratic matrix in bi-degree $(7,6)$ with respect to $J$ is of size $26\times 32$, containing $31$ linear columns and $1$ quadratic column, and also yields the correct determinant.
\end{ex}

\begin{ex}
Let $\Xr = \mathbb{P}(1,1,2)$ be the weighted projective space with the Cox ring 
$R = \mathbb{C}[s_0, s_1, s_2]$ and the class group $G = \mathbb{Z}$, where $\deg s_0 = \deg s_1 = 1$, $\deg s_2 = 2$. $\Xr$ is singular at $(0:0:1)$. Consider the rational map $\phi : \mathcal{X} \dashrightarrow \mathbb{P}^3$ defined by the 
following polynomials of degree $\gamma = 6$:
\[
\begin{aligned}
f_0 &= s_1^2(s_0^2 - 2s_1^2)^2,\\
f_1 &= (s_0^2+s_1^2)(s_0^2 - 2s_1^2)s_2,\\
f_2 &= s_1^2s_2^2,\\
f_3 &= (s_0^2-2s_1^2)s_2^2+2s_0^2(s_0^2-2s_1^2)^2+3s_2^3.
\end{aligned}
\]
We have $\deg(\phi)=2$. This map has two a.l.c.i.\ base points $\mathfrak{p}_1 = (\sqrt{2}:1:0)$ and 
$\mathfrak{p}_2 = (-\sqrt{2}:1:0)$, both being locally of the form
$$(s^2,st,t^2)$$
with $e(I_{\pf_1})=e(I_{\pf_2})=4$. Thus, the map parametrizes a hypersurface of degree $\frac{1}{2}\times(\frac{1}{2}\times 6^2-8)=5$ by \eqref{eq:DEG1} and \cite[Lemma 6.4.2]{CLS11}. These base points enable us to compute the coefficient ideal by taking derivatives. We set
$$J=(\partial I)^{sat}=(s_0^2-2s_1^2,s_2).$$
Note that $J$ is generated by polynomials over $\QQ$. The linear matrix with respect to $J$ in degree $5$ is
$$\left(\!\begin{array}{cccccccccc}
0&4\,x_{0}-x_{3}&0&0&-x_{1}&-2\,x_{0}-x_{3}&0&0&-4\,x_{1}&0\\
0&3\,x_{2}&0&0&x_{0}&3\,x_{2}&0&0&x_{0}&x_{0}\\
4\,x_{0}-x_{3}&0&-x_{1}&-2\,x_{0}-x_{3}&0&0&-4\,x_{1}&0&0&0\\
3\,x_{2}&0&x_{0}&3\,x_{2}&0&0&x_{0}&x_{0}&0&0\\
0&0&0&0&0&0&0&0&3\,x_{2}&2\,x_{0}\\
0&0&0&0&0&0&3\,x_{2}&2\,x_{0}&0&0\\
0&0&0&0&3\,x_{2}&2\,x_{1}&0&0&6\,x_{2}&-x_{3}\\
0&2\,x_{0}+x_{2}&0&0&0&x_{2}&0&0&0&3\,x_{2}\\
0&0&3\,x_{2}&2\,x_{1}&0&0&6\,x_{2}&-x_{3}&0&0\\
2\,x_{0}+x_{2}&0&0&x_{2}&0&0&0&3\,x_{2}&0&0
\end{array}\!\right)$$
which is square and has the correct determinant.
\end{ex}

Finally, we give a $3$-dimensional example.
\begin{ex}
Consider the rational map from $\mathbb{P}^1\times\mathbb{P}^1\times\mathbb{P}^1$ to $\mathbb{P}^4$ defined by the following polynomials in the homogeneous coordinate ring $\mathbb{C}[s_0,s_1;t_0,t_1;u_0,u_1]$:
\begin{align*}
&f_0=s_0^4 t_1^4 u_1^4+2s_0^4t_0^2t_1^2u_0^4+5s_0^3s_1t_0^3t_1u_1^4+2s_0^4t_0^2t_1^2u_0u_1^3,\\
&f_1=s_1^4 t_0^4 u_1^4,\\
&f_2=s_1^4 t_1^4 u_0^4,\\
&f_3=s_0^4 t_1^4 u_0^4 + s_1^4 t_0^4 u_0^4 + s_0^4 t_0^4 u_0^4,\\
&f_4=s_0^4 t_0^4 u_1^4 + s_0^3 s_1 t_0^3 t_1 u_0^3 u_1 + s_0^4 t_0^4 u_0^4.
\end{align*}
We have $\gamma=(4,4,4)$ and $\deg(\phi)=16$. There is one non-l.c.i.\ base point $\mathfrak{p}=(0,1;0,1;0,1)$ with $I_{\pf}=(s_0^4,t_0^4,u_0^4,s_0^3t_0^3u_0^3)$ and $e(I_{\pf})=64$. By \eqref{eq:DEG1}, the rational map parametrizes a hypersurface of degree $\frac{1}{16}\times(3!\times4^3-64)=20$. We take $$J=(s_0,t_0,u_0)^3$$
since $(s_0^4,t_0^4,u_0^4,s_0^3t_0^3u_0^3)(s_0,t_0,u_0)^3=(s_0^4,t_0^4,u_0^4)(s_0,t_0,u_0)^3$. We choose $\mu$ from Example \ref{ex:region}. The linear matrix in degree $(11,7,3)$ is of size $374\times428$. The quadratic matrix in degree $(7,7,3)$ is of size $246\times 246$ with $172$ linear columns and $74$ quadratic columns. Both matrices yield the correct implicit equation.
\end{ex}

\subsection*{Acknowledgments}
We would like to thank professor Laurent Bus\'e from INRIA for his insightful comments and suggestions for the improvement of the manuscript. This work is supported by the National Natural Science Foundation of China (No. 12494550, 12494555). 

\bibliographystyle{amsalpha}
\def\bibfont{\footnotesize}
\bibliography{bibxmx}

\end{document}